\newtheorem{theorem}{Theorem}[section]
\newtheorem{proposition}{Proposition}
\theoremstyle{definition}
\newtheorem{definition}[theorem]{Definition}
\newtheorem{remark}{Remark}
\newcommand{\e}{\varepsilon}
\newcommand{\ds}{\displaystyle}
\def\XXint#1#2#3{{\setbox0=\hbox{$#1{#2#3}{\int}$ }
\vcenter{\hbox{$#2#3$ }}\kern-.6\wd0}}
\def\div{\,\mathrm{div}\,}
\title[OCP for Linear Elliptic Equations]
      {Optimal $L^2$-Control Problem in Coefficients\\ for a Linear Elliptic Equation.\\
      II. Approximation of Solutions and Optimality Conditions}
\author[T.~Horsin and P.~I.~Kogut and O.~Wilk]{}
\subjclass{Primary: 49J20, 35J57; Secondary: 49J45, 35J75.}
 \keywords{Control in coefficients, non-variational solutions, variational convergence, fictitious control.}
 \email{p.kogut@i.ua}
 \email{thierry.horsin@lecnam.net}
 \email{olivier.wilk@cnam.fr}
\thanks{ }
\begin{document}
\maketitle

\centerline{\scshape Thierry Horsin}
\medskip
{\footnotesize
 % please put the address of the second  and third author
 \centerline{ Conservatoire National des Arts et M\'{e}tiers,}
   \centerline{M2N, Case 2D 5000,}
   \centerline{292 rue Saint-Martin, 75003 Paris, France}
}

\medskip

% Enter the first author's name and address:
\centerline{\scshape Peter I. Kogut }
\medskip
{\footnotesize
% please put the address of the first author
 \centerline{Department of Differential Equations,}
   \centerline{Dnipropetrovsk National University,}
   \centerline{ Gagarin av., 72, 49010 Dnipropetrovsk, Ukraine}
} % Do not forget to end the {\footnotesize by the sign }

\medskip

\centerline{\scshape Olivier Wilk}
\medskip
{\footnotesize
 % please put the address of the second  and third author
 \centerline{ Conservatoire National des Arts et M\'{e}tiers,}
   \centerline{M2N, Case 2D 5000,}
   \centerline{292 rue Saint-Martin, 75003 Paris, France}
}

\bigskip

% The name of the associate editor will be entered by an editorial staff
% "Communicated by the associate editor name" is not needed for special issue.
% \centerline{(Communicated by the associate editor name)}

%The abstract of your paper
\begin{abstract}
In this paper we study we study a Dirichlet optimal control problem
associated with a linear elliptic equation the coefficients of which
we take as controls in the class of integrable functions. The characteristic feature of this control object is the fact that the skew-symmetric part of matrix-valued control $A(x)$ belongs to $L^2$-space (rather than $L^\infty)$. In spite of the fact that the equations of this type can exhibit non-uniqueness of weak solutions, the corresponding OCP, under rather general assumptions on the class of admissible controls, is well-posed and admits a nonempty set of solutions \cite{HK_1}. However, the optimal solutions to such problem may have a singular character.
We show that some of optimal solutions can be attainable by solutions of special optimal control problems in perforated domains with fictitious boundary controls on the holes.
\end{abstract}

%The title of your section 1
In this paper we deal with the following optimal control problem (OCP) in coefficients for a linear elliptic  equation
\begin{equation}\label{0.1}
\left\{
\begin{array}{c}
\ds\text{Minimize } I(A,y)=\left\|y-y_d\right\|^2_{L^2(\Omega)}
 + \int_\Omega\left(\nabla y, A^{sym}\nabla y\right)_{\mathbb{R}^N}\,dx\\
 \text{subject to the constraints }\\
-\div\big(A^{sym}\nabla y+A^{skew}\nabla y\big) = f\quad\text{ in }\Omega,\\
y=0\quad\text{ on }\partial\Omega\\
A\in \mathfrak{A}_{ad},
\end{array}
\right.
\end{equation}
where  $(A^{sym},\,A^{skew})\in L^\infty(\Omega;R^{N\times N})\times L^2(\Omega;R^{N\times N})$ are respectively the symmetric and antisymmetric part of the control $A$, $y_d\in L^2(\Omega)$ and $f\in H^{-1}(\Omega)$ are  given distributions, and $\mathfrak{A}_{Ad}$ denotes the class of admissible controls which will be precised later.

The characteristic feature of this problem is the fact that the skew-symmetric part of matrix $A(x)$  belongs to $L^2$-space (rather than $L^\infty)$. As a result, the existence and uniqueness of the weak solutions to the corresponding boundary value problem \eqref{0.1} { are }usually  drastically different from the properties of solutions to the elliptic equations with $L^\infty$-matrices in coefficients. In most of the cases, the situation can deeply change for the matrices $A$ with unremovable singularity. As a rule, some of the weak solutions  can be attained  by the weak solutions to the similar boundary value problems with $L^\infty$-approximated matrix $A$. However, this type does not exhaust all weak solutions to the above problem. There is another type of weak solutions called non-variational \cite{Zh_97,Zhik1_04}, singular \cite{Buttazzo_Kogut,Kogut1,Kogut2,Zuazua1}, pathological \cite{Maz,Serin} and others. As for the optimal control problem \eqref{0.1} we have the following result \cite{HK_1} (see \cite{HK1} for comparison): for any approximation $\left\{A^\ast_k\right\}_{k\in \mathbb{N}}$ of the  matrix $A^\ast\in L^2\big(\Omega;\mathbb{S}^N_{skew}\big)$ with properties $\left\{A^\ast_k\right\}_{k\in \mathbb{N}}\subset L^\infty(\Omega;\mathbb{S}^N_{skew})$ and
$A^\ast_k\rightarrow A^\ast$ strongly in $L^2(\Omega;\mathbb{S}^N_{skew})$, optimal solutions to the corresponding regularized OCPs  associated with matrices $A^\ast_k$  always lead in the limit as $k\to\infty$ to some admissible (but not optimal in general) solution $(\widehat{A},\widehat{y}\,)$ of the original OCP \eqref{0.1}. Moreover, this limit pair can depend on the choice of the approximative sequence $\left\{A^\ast_k\right\}_{k\in \mathbb{N}}$.  However, as follows from counter-example, given in \cite{HK_1}, it is possible a situation when none of optimal solutions to OCP \eqref{0.1} can be attainable in such way.
Therefore, the aim of this paper is to discuss a scheme of approximation for OCP \eqref{0.1} in order to attain the other types of optimal solutions, and derive the first order optimality system to this problem.

% Modification of the june 22.
In order to illustrate the difficulties on the approximations of the OCPs due to the possible existence of variational and non-variational solutions, we present some numerical simulations in section \ref{NumSim}.

In section \ref{Sec 5} we give a precise description of the class of admissible
controls $\mathfrak{A}_{ad}\subset L^2\big(\Omega;\mathbb{R}^{N\times N}\big)$
which guarantee that non-variational solutions can be attained through the
sequence of optimal solutions to OCPs in special perforated domains with
fictitious boundary controls on the boundary of holes. Namely, we consider the
following family of regularized OCPs
\begin{equation}\label{0.2}
\left\{
\begin{array}{c}
\ds\text{Minimize } I_\e(A,v,y):=\left\|y-y_d\right\|^2_{L^2(\Omega_\e)}+ \int_{\Omega_\e}\left(\nabla y, A^{sym}\nabla y\right)_{\mathbb{R}^N}\,dx\\
\ds  + \frac{1}{\e^\sigma}\|v\|^2_{H^{-\frac{1}{2}}(\Gamma_\e)}\\
 \text{subject to the constraints }\\
-\div\big(A^{sym}\nabla y+A^{skew}\nabla y\big) = f\quad\text{in }\ \Omega_\e,\\
y=0\text{ on }\partial\Omega,\quad
\partial y/ \partial \nu_{A}=v\ \text{on }\Gamma_\e,\\
\ y\in H^1_0(\Omega_\e;\partial\Omega),
\end{array}
\right.
\end{equation}
where $\Omega_\e$ is the subset of $\Omega$ such that $\partial\Omega\subset\partial\Omega_\e$, $\sigma>0$, and $\|A(x)\|_{\mathbb{S}^N}:=\max_{i,j=1,\dots,N}\left|a_{ij}(x)\right|\le\e^{-1}$ a.e. in $\Omega_\e$.
Here, $v$ stands for the fictitious control.

We show that OCP \eqref{0.2} has a nonempty set of solutions $(A_\e^0,v_\e^0,y_\e^0)$ for every $\e>0$.
Moreover, as follows from \eqref{0.2}$_1$, the cost functional
$I_\e$ seems to be rather sensitive with respect to the fictitious
controls. Due to this fact,
we prove that  the sequence $\left\{(A_\e^0,y_\e^0)\right\}_{\e>0}$ gives in the limit an optimal solution $(A^0,y^0)$ to the
original problem.

The main technical difficulty, which is related with the study of the asymptotic behaviour of OCPs \eqref{0.2} as $\e\to 0$,
deals with the identification of the limit
$\lim_{\e\to 0}\left\{\left<v^0_\e,y^0_{\e}\right>_{H^{-\frac{1}{2}}(\Gamma_\e);H^{\frac{1}{2}}(\Gamma_\e)}\right\}_{\e>0}$
of two weakly convergent sequences. Due to the special properties of the skew-symmetric parts of admissible controls $A\in \mathfrak{A}_{ad}\subset L^2\big(\Omega;\mathbb{S}^N\big)$, we show that this limit can be recovered in an explicit form.
We also show in this section that
the energy equalities to the regularized boundary value problems can be specified by two extra terms which characterize the presence of the-called hidden singular energy coming from $L^2$-properties of skew-symmetric components $A^{skew}$ of admissible controls.

In conclusion,  in Section~\ref{Sec 6}, we derive the optimality conditions for regularized OCPs \eqref{0.2} and show that the limit passage in optimality system for the regularized problems \eqref{0.2} as $\e\to 0$ leads to the optimality system for the original OCP \eqref{0.1}.

%%%%%%%%%%%%%%%%%%%%%%%%%%%%%%%%%%%%%%%%%%%%%%%%%%%%%%%%%%%%%%%%%%%%%%%%%%%%%%%%

\section{Notation and Preliminaries}\label{Sec_1}
\label{Sec 1}

Let $\Omega$ be a bounded open connected subset of $\mathbb{R}^N$ ($N\ge 2$)
with Lipschitz boundary $\partial\Omega$.
By $H_0^1(\Omega)$ we denote the closure of $C_0^\infty
(\Omega)$-functions in
the Sobolev space $H^{1}(\Omega)$, while $H^{-1}(\Omega)$ { denotes
the dual of $H^{1}_0(\Omega)$.
Let $\Gamma$ be a part of the boundary
$\partial\Omega$ with positive $(N-1)$-dimensional measures. We consider
$C^\infty_0(\mathbb{R}^N;\Gamma)=\left\{\varphi\in
C^\infty_0(\mathbb{R}^N)\ :\ \varphi=0\text{ on }\Gamma\right\}$,
and denote  $H^{1}_0(\Omega;\Gamma)$ its  closure   with respect to the norm
$\|y\|=\left(\int_\Omega \|\nabla y\|^2_{\mathbb{R}^N}\,dx\right)^{1/2}.$

Let $\mathbb{M}^N=\mathbb{S}^N_{sym}\oplus\mathbb{S}^N_{skew}$ be the set of all $N\times N$ {real} matrices.
Here, $\mathbb{S}^N_{skew}$ stands for the  set of all
skew-symmetric matrices $C=[c_{ij}]_{i,j=1}^N$, whereas $\mathbb{S}^N_{sym}$ is the set of all $N\times N$ symmetric matrices.

Let
$L^2(\Omega)^{\frac{N(N-1)}{2}}=L^2\big(\Omega;\mathbb{S}^N_{skew}\big)$
be the {normed} space of measurable square-integrable functions whose values are skew-sym\-met\-ric matrices.
By analogy, we can define the space
$L^2(\Omega)^{\frac{N(N+1)}{2}}=L^2\big(\Omega;\mathbb{S}^N_{sym}\big)$.

Let $A(x)$ and $B(x)$ be given  matrices such that $A,B\in L^2(\Omega;\mathbb{S}^N_{skew})$. We say that these matrices are related by the binary relation $\preceq$ on the set $L^2(\Omega;\mathbb{S}^N_{skew})$ (in symbols, $A(x)\preceq B(x)$ a.e. in $\Omega$), if
\begin{equation}
\label{1.0b}
\mathcal{L}^N \left\{\bigcup_{i=1}^N \bigcup_{j=i+1}^N\left\{ x\in\Omega\ :\ | a_{ij}(x) |>  |b_{ij}(x)|\right\}\right\}=0.
\end{equation}
Here, $\mathcal{L}^N(E)$ denotes the $N$-dimensional Lebesgue measure of $E\subset \mathbb{R}^N$  defined on the completed borelian $\sigma$-algebra.

We define the divergence $\div A$ of a matrix $A\in L^2\big(\Omega;\mathbb{M}^N\big)$ as a vector-valued distribution
$d\in H^{-1}(\Omega;\mathbb{R}^N)$ by the following rule
\begin{equation}
\label{1.0a}
\left<d_i,\varphi\right>_{H^{-1}(\Omega);H^1_0(\Omega)}= -\int_\Omega
(\mathbf{a}^t_i,\nabla\varphi)_{\mathbb{R}^N}\,dx,\quad \forall\,\varphi\in
C^\infty_0(\Omega),\quad\forall\,i\in\left\{1,\dots,N\right\},
\end{equation}
where $\mathbf{a}_i$ stands for the $i$-th row of the matrix $A$.

For fixed two constants $\alpha$ and $\beta$ such that
$0<\alpha\le\beta<+\infty$,
we define $\mathfrak{M}_\alpha^\beta(\Omega)$ as a set of all matrices
$A=[a_{i\,j}\,]$ in $L^\infty(\Omega;\mathbb{S}^N_{sym})$ such that
\begin{equation}
\label{1.1}
\alpha \|\xi\|^2_{\mathbb{R}^N}\le  \left(A\xi,\xi\right)_{\mathbb{R}^N}\le \beta \|\xi\|^2_{\mathbb{R}^N},\quad \text{a.e. in } \Omega,\quad \forall\,\xi\in \mathbb{R}^{N}.
\end{equation}

Let $A\in L^2\big(\Omega;\mathbb{M}^N\big)$ be an arbitrary matrix. In view of the representation $A=A^{sym}+A^{skew}$, we can associate with $A$ the form $\varphi(\cdot,\cdot)_A:H^1_0(\Omega)\times H^1_0(\Omega)\to \mathbb{R}$ following the rule
\[
\varphi(y,v)_A= \int_\Omega \big(\nabla v,A^{skew}(x)\nabla y\big)_{\mathbb{R}^N}\,dx,\quad \forall\,y,v\in H^1_0(\Omega).
\]
By analogy with \cite{HK_1}, we introduce the following concept.
\begin{definition}
\label{Def 2.7}  We say that an element $y\in H^1_0(\Omega)$ belongs to the set $D(A)$ if
\begin{equation}
\label{2.8}
\left|\int_\Omega \big(\nabla \varphi,A^{skew}\nabla y\big)_{\mathbb{R}^N}\,dx\right|\le c(y,A^{skew}) \left(\int_\Omega |\nabla \varphi|^2_{\mathbb{R}^N}\,dx\right)^{1/2},\  \forall\,\varphi\in C^\infty_0(\Omega)
\end{equation}
with some constant $c$ depending only of $y$ and $A^{skew}$.
\end{definition}

As a result, having set
\[
[y,\varphi]_A= \int_\Omega \big(\nabla \varphi,A^{skew}(x)\nabla y\big)_{\mathbb{R}^N}\,dx,\quad \forall\,y\in D(A),\ \forall\,\varphi\in C^\infty_0(\Omega),
\]
we see that the bilinear form $[y,\varphi]_A$ can be defined for all $\varphi\in H^1_0(\Omega)$ using \eqref{2.8} and the standard rule
\begin{equation}
\label{2.8a}
[y,\varphi]_A=\lim_{\e\to 0}\,[y,\varphi_\e]_A,
\end{equation}
where $\left\{\varphi_\e\right\}_{\e>0}\subset C^\infty_0(\Omega)$ and $\varphi_\e\rightarrow \varphi$ strongly in $H^1_0(\Omega)$.

Let $\e$ be a small parameter, $I_\e:\mathbb{U}_\e\times
\mathbb{Y}_\e\rightarrow \overline{\mathbb{R}}$ be a cost functional, $\mathbb{Y}_\e$ be a space of states,
and $\mathbb{U}_\e$ be a space of controls. Let
$$
\Xi_\e\subset\left\{(u_\e,y_\e)\in \mathbb{U}_\e\times
\mathbb{Y}_\e \ :\ u_\e\in U_\e,\ I_\e(u_\e,y_\e)<+\infty\right\}
$$
be a set of all admissible pairs linked by some state equation.
We consider the following constrained minimization problem:
\begin{equation}
\label{1.3} (\mathrm{CMP_\e})\ :\qquad\qquad \left<
\inf\limits_{(u,y)\in\,\Xi_\e} I_\e(u,y)\right>.
\end{equation}
Since the sequence of
constrained minimization problems \eqref{1.3} lives in variable spaces
$\mathbb{U}_\e\times\mathbb{Y}_\e$, we assume that there exists a Banach
space $\mathbb{U}\times \mathbb{Y}$ with respect to
which a convergence in the scale of spaces $\left\{\mathbb{U}_\e\times \mathbb{Y}_\e\right\}_{\e>0}$ is defined
(for the details, we refer to \cite{KL,Zh_98}). In the sequel, we use the following notation for this convergence
$(u_\e,y_\e)\stackrel{\mu}{\longrightarrow}\, (u,y)$  in $\mathbb{U}_\e\times \mathbb{Y}_\e$.

In order to study the asymptotic behavior of a family of $(\mathrm{CMP_\e})$,
the passage to the limit in \eqref{1.3} as the small parameter $\e$ tends to zero has to be realized.
Following the scheme of the direct variational convergence \cite{KL},
we adopt the following definition for the convergence of minimization problems in variable spaces.

\begin{definition}
\label{Def 1.4} A problem $\left<\inf_{(u,y)\in\Xi} I(u,y)\right>$ is
the variational limit of the sequence (\ref{1.3}) as $\e\to 0$
\[
\left(\text{in symbols, }\quad
\left<\inf\limits_{(u,y)\in\,\Xi_\e} I_\e(u,y)\right>\,\stackrel{\text{Var}}{\xrightarrow[\e\to 0]{}}\,
\left<\inf_{(u,y)\in\Xi} I(u,y)\right>\ \right)
\]
if
and only if the following conditions are satisfied:
\begin{enumerate}%[$\quad$(dd)]
\item[$\quad$(d)] If sequences $\left\{\e_k\right\}_{k\in \mathbb{N}}$ and
$\left\{(u_k,y_k)\right\}_{k\in \mathbb{N}}$ are such that $\varepsilon_k
\rightarrow 0$ as $k\rightarrow \infty$, $(u_k,y_k)\in
\Xi_{\varepsilon_k}$ $\forall\,k\in \mathbb{N}$, and
$(u_k,y_k)\stackrel{\mu}{\longrightarrow}\,(u,y)$ in $\mathbb{U}_{\varepsilon_k}\times\mathbb{Y}_{\varepsilon_k}$, then
\begin{equation}
\label{1.6} (u,y)\in \Xi;\quad I(u,y)\le
\liminf_{k\to\infty}I_{\e_k}(u_k,y_k).
\end{equation}

\item[$\quad$(dd)] For every $(u,y) \in \Xi\subset \mathbb{U}\times\mathbb{Y}$ there are a constant $\varepsilon^0 >0$ and a
sequence
$\left\{(u_\e,y_\e)\right\}_{\varepsilon>0}$ (called a $\Gamma$-realizing sequence) such that
\begin{gather}
(u_\e,y_\e)\in\Xi_\varepsilon,\ \forall\,\varepsilon \leq
  \varepsilon^0,\quad
(u_\e,y_\e)\,\stackrel{\mu}{\longrightarrow}\, ({u},{y})\
\text{ in }\ \mathbb{U}_\e\times \mathbb{Y}_\e,\label{1.7a}\\
\label{1.7c} I(u,y)\ge \limsup_{\varepsilon\to 0}
I_{\varepsilon}(u_\e,y_\e).
\end{gather}
\end{enumerate}
\end{definition}
\begin{theorem}[\cite{KL}]
\label{Th 1.8} Assume that the constrained minimization problem
\begin{equation}
\label{1.9} \Big \langle \inf_{(u,y) \in \Xi_0} I_0(u,y)\Big
\rangle
\end{equation}
is the variational limit of sequence (\ref{1.3}) in the sense
of Definition \ref{Def 1.4} and this problem has a nonempty set of
solutions
$$
\Xi_0^{opt}:=\left\{(u^0,y^0)\in\Xi_0\ :\ I_0(u^0,y^0)=\inf_{(u,y) \in \Xi_0} I_0(u,y)\right\}.
$$
For every $\varepsilon >0$, let
$(u^0_\varepsilon,y^0_\e)\in \Xi_\varepsilon$ be a minimizer of $I_\e$ on the
corresponding set $\Xi_\e$. If the sequence
$\{(u^0_\varepsilon,y^0_\e)\}_{\varepsilon >0}$ is relatively compact with
respect to the $\mu$-convergence in variable spaces $\mathbb{U}_\e\times \mathbb{Y}_\e$, then
there exists a pair $(u^0,y^0)\in \Xi_0^{opt}$ such that
\begin{gather}
\label{1.10} (u^0_\varepsilon,y^0_\e)\,\stackrel{\mu}{\longrightarrow}\,
(u^0,y^0)\quad \text{in }\ %
\mathbb{U}_\e\times \mathbb{Y}_\e,\\
\inf_{(u,y)\in\,\Xi_0}I_0(u,y)= I_0\left(u^0,y^0\right) =\lim_{\varepsilon\to
0} I_{\varepsilon}(u^0_\varepsilon,y^0_\varepsilon) =\lim_{\varepsilon\to
0}\inf_{(u_\varepsilon,y_\e)\in\,{\Xi}_\varepsilon}
{I}_\varepsilon(u_\varepsilon,y_\e).\label{1.11}
\end{gather}
\end{theorem}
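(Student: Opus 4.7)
The plan is to combine the two halves of the variational-convergence definition: the liminf condition (d) will produce a cluster point that lies in $\Xi_0$ and yields a lower bound, while the limsup condition (dd), applied to a genuine minimizer of the limit problem, will supply a matching upper bound. Optimality of the approximate minimizers on each $\Xi_\varepsilon$ is the hinge that lets these two bounds be compared.

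First, I would use the relative $\mu$-compactness assumption to extract a subsequence (still indexed by $\varepsilon$) such that $(u^0_\varepsilon,y^0_\varepsilon)\stackrel{\mu}{\longrightarrow}(u^*,y^*)$ in $\mathbb{U}_\varepsilon\times\mathbb{Y}_\varepsilon$ for some pair $(u^*,y^*)\in\mathbb{U}\times\mathbb{Y}$. Condition (d) of Definition~\ref{Def 1.4} then forces $(u^*,y^*)\in\Xi_0$ and
\[
I_0(u^*,y^*)\;\le\;\liminf_{\varepsilon\to 0} I_\varepsilon(u^0_\varepsilon,y^0_\varepsilon).
\]

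Next, pick an arbitrary $(\widetilde u,\widetilde y)\in\Xi_0^{opt}$, which is nonempty by hypothesis. By condition (dd) there exists a $\Gamma$-realizing sequence $(u_\varepsilon,y_\varepsilon)\in\Xi_\varepsilon$ with $(u_\varepsilon,y_\varepsilon)\stackrel{\mu}{\longrightarrow}(\widetilde u,\widetilde y)$ and $\limsup_{\varepsilon\to 0} I_\varepsilon(u_\varepsilon,y_\varepsilon)\le I_0(\widetilde u,\widetilde y)$. Since $(u^0_\varepsilon,y^0_\varepsilon)$ minimizes $I_\varepsilon$ on $\Xi_\varepsilon$, we have $I_\varepsilon(u^0_\varepsilon,y^0_\varepsilon)\le I_\varepsilon(u_\varepsilon,y_\varepsilon)$ for every $\varepsilon$. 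Chaining the four inequalities gives
\[
I_0(u^*,y^*)\le\liminf_{\varepsilon\to 0} I_\varepsilon(u^0_\varepsilon,y^0_\varepsilon)\le\limsup_{\varepsilon\to 0} I_\varepsilon(u^0_\varepsilon,y^0_\varepsilon)\le\limsup_{\varepsilon\to 0} I_\varepsilon(u_\varepsilon,y_\varepsilon)\le I_0(\widetilde u,\widetilde y)=\inf_{\Xi_0} I_0.
\]
Since $(u^*,y^*)\in\Xi_0$, this shows $(u^*,y^*)\in\Xi_0^{opt}$, and every inequality above is in fact an equality. Setting $(u^0,y^0):=(u^*,y^*)$ yields \eqref{1.11}, and the sandwich identifies $\lim_{\varepsilon\to 0} I_\varepsilon(u^0_\varepsilon,y^0_\varepsilon)$ with the common value.

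Finally, to upgrade from subsequential convergence to the statement as written for the whole family, I would argue by contradiction: if some subsequence failed to $\mu$-converge to a point of $\Xi_0^{opt}$, relative compactness would produce a further $\mu$-convergent subsequence whose limit, by the argument just given, would still belong to $\Xi_0^{opt}$, a contradiction. The main technical obstacle I expect is not the logic of the above chain, which is essentially a $\Gamma$-convergence recipe, but verifying that the ambient space $\mathbb{U}\times\mathbb{Y}$ and the $\mu$-convergence are set up so that (d) and (dd) can be used on the same objects; this is a framework issue handled by \cite{KL} rather than a step requiring new computation.
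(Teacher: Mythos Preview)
Your argument is the standard $\Gamma$-convergence proof and is correct: extract a $\mu$-cluster point, use (d) for the lower bound, apply (dd) to an optimal pair of the limit problem for the upper bound, compare via optimality of $(u^0_\varepsilon,y^0_\varepsilon)$, and conclude by the usual subsequence argument.

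Note, however, that the paper does not supply its own proof of this theorem: it is stated with the attribution \cite{KL} and invoked as a black box, so there is no in-paper argument to compare against. Your write-up is precisely the kind of proof one finds in the cited reference, so there is nothing to add.
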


%%%%%%%%%%%%%%%%%%%%%%%%%%%%%%%%%%%%%%%%%%%%%%%%%%%%%%%%%%%%%%%%%%%%%%%%%%%%%%%%%%%%%%%%%%

\section{Setting of the Optimal Control Problem}
\label{Sec 2}

Let $f\in H^{-1}(\Omega)$ and $y_d\in L^2(\Omega)$ be given distributions.

by choosing an appropriate control $A\in L^2(\Omega;\mathbb{M}^N)$.

More precisely, we are
concerned with the following OCP
\begin{gather}
\label{2.3}
\text{Minimize } I(A,y)=\left\|y-y_d\right\|^2_{L^2(\Omega)}+\int_\Omega\left(\nabla y, A^{sym}\nabla y\right)_{\mathbb{R}^N}\,dx
\end{gather}
subject to the constraints
\begin{gather}
\label{2.1}
-\div\big(A(x)\nabla y\big) = f\quad\text{in }\ \Omega,\\
\label{2.2}
y=0\text{ on }\partial\Omega,\\
\label{2.3a}
A\in \mathfrak{A}_{ad}.
\end{gather}
To define the class of admissible controls $\mathfrak{A}_{ad}$, , we introduce the following sets.
\begin{align}
\label{2.3b} U_{a,1}&=\left\{\left. A=[a_{i\,j}]\in L^1(\Omega;\mathbb{S}^N_{sym})\
\right| TV(a_{ij})\le c,\ 1\le i\le j\le N\right\},\\[1ex]
\label{2.3bb} U_{b,1}&=\left\{\left. A=[a_{i\,j}]\in L^\infty(\Omega;\mathbb{S}^N_{sym})\
\right| A\in \mathfrak{M}_\alpha^\beta(\Omega)\right\},\\[1ex]
\label{2.3c} U_{a,2}&=\left\{\left. A=[a_{i\,j}]\in L^2(\Omega;\mathbb{S}^N_{skew})\ \right|\ A(x)\preceq A^\ast(x)\ \text{a.e. in }\ \Omega\right\},\\
\label{2.3cc} U_{b,2}&=\left\{\left. A=[a_{i\,j}]\in L^2(\Omega;\mathbb{S}^N_{skew})\ \right|\  A\in Q\right\},
\end{align}
where $A^\ast\in L^2(\Omega;\mathbb{S}^N_{skew})$ is a given matrix, $c$ is a positive constant, $Q$ is a nonempty convex compact subset of $L^2(\Omega;\mathbb{S}^N_{skew})$ such that the null matrix $A\equiv [0]$ belongs to $Q$, and
\begin{equation*}
TV(f):= \sup\Big\{\int_\Omega f\,(\nabla,\varphi)_{\mathbb{R}^N}\,dx\,  :
\varphi\in C^1_0(\Omega;\mathbb{R}^N),\
|\varphi(x)|\le 1\ \text{for}\ x\in \Omega\Big\},
\end{equation*}
\begin{definition}
\label{Def 2.3e}
We say that a matrix $A=A^{sym}+A^{skew}$ is an
admissible control to the Dirichlet boundary value problem \eqref{2.1}--\eqref{2.2} (in symbols, $A\in \mathfrak{A}_{ad}$)
if $A^{sym}\in \mathfrak{A}_{ad,1}:=U_{a,1}\cap U_{b,1}$ and $A^{skew}\in \mathfrak{A}_{ad,2}:=U_{a,2}\cap U_{b,2}$.
\end{definition}

We  have the following result.
\begin{proposition}[\cite{HK_1}]
\label{Prop 2.3f} The set $\mathfrak{A}_{ad}$ is nonempty, convex, and sequentially compact with respect to the strong topology of
$L^2(\Omega;\mathbb{M}^N)$.
\end{proposition}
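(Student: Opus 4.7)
The plan is to verify the three properties separately, treating the symmetric and skew-symmetric parts independently and then combining. First I would dispose of nonemptiness and convexity, and then focus the main effort on sequential compactness, which is the only nontrivial point.

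For nonemptiness, the constant matrix $\beta I$ (or any constant matrix in $\mathfrak{M}_\alpha^\beta(\Omega)$) lies in $U_{a,1}\cap U_{b,1}$ since its total variation is zero; and the null matrix lies in $U_{a,2}\cap U_{b,2}$ since trivially $0\preceq A^\ast$ and $0\in Q$ by hypothesis. So $A\equiv \beta I\in \mathfrak{A}_{ad}$. For convexity, each of the four defining sets is convex: the TV-seminorm bound and the pointwise ellipticity inequalities \eqref{1.1} are preserved by convex combination; the binary relation $A\preceq A^\ast$ of \eqref{1.0b} is preserved because $|ta_{ij}+(1-t)b_{ij}|\le t|a^\ast_{ij}|+(1-t)|a^\ast_{ij}|=|a^\ast_{ij}|$; and $Q$ is convex by assumption. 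Hence $\mathfrak{A}_{ad}=(U_{a,1}\cap U_{b,1})\oplus(U_{a,2}\cap U_{b,2})$ is convex.

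The substantive part is sequential compactness in the strong topology of $L^2(\Omega;\mathbb{M}^N)$. Let $\{A_k\}\subset\mathfrak{A}_{ad}$ with decomposition $A_k=A_k^{sym}+A_k^{skew}$. For the symmetric parts I would invoke the compact embedding $BV(\Omega)\hookrightarrow L^1(\Omega)$ componentwise: since every entry $a_{ij}^{(k)}$ of $A_k^{sym}$ satisfies $\|a_{ij}^{(k)}\|_{L^\infty}\le\beta$ and $TV(a_{ij}^{(k)})\le c$, a diagonal extraction yields a subsequence converging in $L^1(\Omega)$ and a.e. to some $a_{ij}$, with $a_{ij}\in L^\infty$ of norm $\le\beta$ and $TV(a_{ij})\le c$ by lower semicontinuity of the total variation. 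The pointwise ellipticity \eqref{1.1} then passes to the a.e. limit, giving $A^{sym}\in U_{a,1}\cap U_{b,1}$. Uniform $L^\infty$-bound together with $L^1$-convergence upgrades the convergence to strong $L^2$. For the skew-symmetric parts the compactness of $Q$ in $L^2(\Omega;\mathbb{S}^N_{skew})$ produces a further subsequence with $A_k^{skew}\to A^{skew}$ strongly in $L^2$; extracting again to get a.e. convergence, the inequality $|a^{(k)}_{ij}(x)|\le|a^\ast_{ij}(x)|$ passes to the limit, so $A^{skew}\in U_{a,2}\cap U_{b,2}$.

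The main obstacle, as I see it, is the symmetric part: one must combine the BV-compactness (which gives only $L^1$-convergence) with the $L^\infty$-ellipticity bound to upgrade to strong $L^2$-convergence and, more importantly, one must be careful that total-variation lower semicontinuity is applied in a form compatible with a.e. convergence on $\Omega$. Once that is handled, summing the two strongly convergent subsequences yields $A_k\to A^{sym}+A^{skew}\in\mathfrak{A}_{ad}$ strongly in $L^2(\Omega;\mathbb{M}^N)$, completing the proof.
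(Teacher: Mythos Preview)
The paper does not supply its own proof of this proposition; it is quoted from \cite{HK_1} without argument. Your proposal is correct and follows the natural route one would expect in that reference: nonemptiness via a constant symmetric matrix plus the null skew matrix, convexity from the convexity of each defining constraint, and sequential compactness by treating the two components separately---BV-compactness combined with the uniform $L^\infty$-bound for $A^{sym}$, and the assumed compactness of $Q$ for $A^{skew}$.

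One small comment on the point you flag as the ``main obstacle'': the upgrade from $L^1$-convergence to $L^2$-convergence under a uniform $L^\infty$-bound is in fact entirely routine. If $a_k\to a$ in $L^1(\Omega)$ with $\|a_k\|_{L^\infty}\le\beta$, then $\|a\|_{L^\infty}\le\beta$ as well (Fatou on the a.e.~subsequence), and
\[
\|a_k-a\|_{L^2(\Omega)}^2\le 2\beta\,\|a_k-a\|_{L^1(\Omega)}\to 0,
\]
so no delicate interpolation is needed. Likewise, lower semicontinuity of the total variation under $L^1$-convergence is the standard statement (e.g.\ from the definition of $TV$ as a supremum over test vector fields), so there is nothing subtle about its compatibility with a.e.\ convergence. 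With these remarks your argument is complete.
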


The distinguishing feature of optimal control problem \eqref{2.3}--\eqref{2.3a} is the fact that the matrix-valued control $A\in \mathfrak{A}_{ad}$ is merely measurable and belongs to the space $L^2\big(\Omega;\mathbb{M}^N\big)$ (rather than the space of bounded matrices $L^\infty\big(\Omega;\mathbb{M}^N\big)$). The unboundedness of the skew-symmetric part of matrix $A\in \mathfrak{A}_{ad}$ can have a reflection in non-uniqueness of weak solutions to the corresponding boundary value problem. It means that there exists
a matrix $A\in L^2\big(\Omega;\mathbb{M}^N\big)$ such that the corresponding state $y\in H^1_0(\Omega)$ may be not unique.
\begin{definition}
\label{Def 2.13}
We say that  $(A,y)$ is an admissible pair to the OCP \eqref{2.3}--\eqref{2.3a} if $A\in \mathfrak{A}_{ad}\subset L^2\big(\Omega;\mathbb{M}^N\big)$,
$y\in D(A)\subset H^1_0(\Omega)$, and the pair $(A,y)$ is related by the integral identity
\begin{equation}
\label{2.5}
\int_\Omega \big(\nabla \varphi,A^{sym}\nabla y+A^{skew}\nabla y\big)_{\mathbb{R}^N}\,dx
=\left<f,\varphi\right>_{H^{-1}(\Omega);H^1_0(\Omega)},\quad\forall\,\varphi\in C^\infty_0(\Omega).
\end{equation}
\end{definition}

We
denote by $\Xi$ the set of all admissible pairs for
the OCP \eqref{2.3}--\eqref{2.3a}. Let $\tau$ be the topology on the set of admissible pairs
$\Xi\subset L^2\big(\Omega;\mathbb{M}^N\big)\times
H^{1}_0(\Omega)$
which we define as the product of the strong topology of
$L^2\big(\Omega;\mathbb{M}^N\big)$ and the weak topology of
$H^{1}_0(\Omega)$. We say that a pair $(A^0,y^0)\in L^2\big(\Omega;\mathbb{M}^N\big)\times D(A^0)$ is
optimal for problem \eqref{2.3}--\eqref{2.3a} if
\[
(A^0,y^0)\,\in\,\Xi\ \text{ and }\ I(A^0,y^0)=\inf_{(A,y)\in\,\Xi}
I(A,y).
\]
As immediately follows from \eqref{2.8a}, every weak solution $y\in D(A)$ to the problem \eqref{2.1}--\eqref{2.2} satisfies the energy equality
\begin{equation}
\label{2.12}
\int_\Omega \big(A^{sym}\nabla y,\nabla y\big)_{\mathbb{R}^N}\,dx + [y,y]_A
=\left<f,y\right>_{H^{-1}(\Omega);H^1_0(\Omega)},
\end{equation}
where the value $[y,y]_A$ may not of constant sign for all $y\in D(A)$. Hence, the energy equality \eqref{2.12} does not allow us to derive a reasonable a priory estimate in $H^1_0$-norm for the weak solutions (see \cite{HK_1}).

As was shown in \cite{HK_1}, OCP \eqref{2.3}--\eqref{2.3a} is always regular, i.e. $\Xi\ne\emptyset$, and moreover, for each $f\in
H^{-1}(\Omega)$  and $y_d\in L^2(\Omega)$, this problem admits at least one solution. However, the main point is that for any approximation $\left\{A^\ast_k\right\}_{k\in \mathbb{N}}$ of the  matrix $A^\ast\in L^2\big(\Omega;\mathbb{S}^N_{skew}\big)$ with properties $\left\{A^\ast_k\right\}_{k\in \mathbb{N}}\subset L^\infty(\Omega;\mathbb{S}^N_{skew})$ and
$A^\ast_k\rightarrow A^\ast$ strongly in $L^2(\Omega;\mathbb{S}^N_{skew})$, optimal solutions to the corresponding regularized OCPs  associated with matrices $A^\ast_k$  always lead in the $\tau$-limit as $k\to\infty$ to some admissible (but not optimal in general) solution $(\widehat{A},\widehat{y}\,)$ of the original OCP \eqref{2.3}--\eqref{2.3a}. Moreover, this limit pair can depend on the choice of the approximative sequence $\left\{A^\ast_k\right\}_{k\in \mathbb{N}}$.  However, as follows from counter-example, given in \cite{HK_1}, it is possible a situation when none of optimal solutions to OCP \eqref{2.3}--\eqref{2.3a} can be attainable in such way. In particular, the main result of \cite{HK_1} says that if some optimal pair $(\widehat{A},\widehat{y}\,)\in L^2(\Omega;\mathbb{M}^N)\times H^1_0(\Omega)$ to OCP \eqref{2.3}--\eqref{2.3a} is attainable through the above $L^\infty$-approximation of matrix $A^\ast$, then this pair is related by energy equality
\begin{equation}
\label{5.14.c}
\int_\Omega \big(A^{sym}\nabla \widehat{y},\nabla \widehat{y}\big)_{\mathbb{R}^N}\,dx
=\left<f,\widehat{y}\right>_{H^{-1}(\Omega);H^1_0(\Omega)}.
\end{equation}
Hence, the question is what kind of approximation to OCP \eqref{2.3}--\eqref{2.3a} should be applied in order to attain the other types of optimal solutions which do not hold true the energy equality \eqref{5.14.c}.

%%%%%%%%%%%%%%%%%%%%%%%%%%%%%%%%%%%%%%%%%%%%%%%%%%%%%%%%%%%%%%%%%%%%%%%%%%%%%%%%%%%%%%%%%%%%%%%%%%%%

\section{On approximation of non-variational solutions to OCP (\ref{2.3})--(\ref{2.3a})}
\label{Sec 5}
We begin this section with some auxiliary results and notions. Let $A\in \mathfrak{A}_{ad}$ be a fixed matrix and let
$L(A)$ be a subspace of $H^1_0(\Omega)$ such that
\begin{equation}
\label{4.17aa}
L(A)=\left\{h\in D(A)\ :\ \int_\Omega \big(\nabla \varphi,A\nabla h\big)_{\mathbb{R}^N}\,dx=0\ \forall\,\varphi \in C^\infty_0(\mathbb{R}^N)\right\},
\end{equation}
i.e., $L(A)$ is the set of all weak solutions of the homogeneous problem
\begin{equation}
\label{4.17a}
-\div\big(A\nabla y\big) = 0\quad\text{in }\ \Omega,\quad
y=0\ \text{ on }\ \partial\Omega.
\end{equation}

Let $\e$ be a small parameter.
Assume that the parameter $\e$ varies within a strictly decreasing sequence of positive real numbers which converge to $0$.
Hereinafter in this section,  for any subset $E\subset\Omega$, we denote by
$|E|$ its $N$-dimensional Lebesgue measure $\mathcal{L}^N(E)$.

For every $\e>0$, let $T_\e:\mathbb{R}\rightarrow \mathbb{R}$ be the truncation function defined by
\begin{equation}
\label{5.00}
T_\e(s)=\max\left\{\min\left\{s,\e^{-1}\right\},-\e^{-1}\right\}.
\end{equation}
The following property of $T_\e$ is well known (see \cite{Kind-Sta}). Let $g\in L^2(\Omega)$ be an arbitrary function. Then we have:
\begin{equation}
\label{5.0}
T_\e(g)\in L^\infty(\Omega)\ \forall\,\e>0\quad\text{and}\quad T_\e(g)\rightarrow g\ \text{strongly in }\ L^2(\Omega).
\end{equation}

Let $A^\ast\in L^2\big(\Omega;\mathbb{S}^N_{skew}\big)$ be a matrix mentioned in the control constraints \eqref{2.3c}. For a given sequence $\left\{\e>0\right\}$, we define the cut-off operators $\mathbb{T}_\e:\mathbb{S}^N_{skew}\rightarrow \mathbb{S}^N_{skew}$ as follows
$\mathbb{T}_\e (A^\ast)=\left[T_\e(a^\ast_{ij})\right]_{i,j=1}^N$ for every $\e>0$. We associate with such operators the following
set of subdomains $\left\{\Omega_\e\right\}_{\e>0}$ of $\Omega$
\begin{equation}
\label{5.0a}
\Omega_\e=\Omega\setminus Q_\e,\quad\forall\,\e>0,
\end{equation}
where
\begin{equation}
\label{5.0b}
Q_\e=\mathrm{closure}\,\left\{x\in\Omega\ :\ \|A^\ast(x)\|_{\mathbb{S}^N_{skew}}:=\max_{1\le i<j\le N}\left|a^\ast_{ij}(x)\right|\ge\e^{-1}\right\}.
\end{equation}
\begin{definition}
\label{Def 5.1}
We say that a matrix $A^\ast\in L^2\big(\Omega;\mathbb{S}^N_{skew}\big)$ is of the $\mathfrak{F}$-type, if there exists a strictly decreasing sequence of positive real numbers $\left\{\e\right\}$ converging to $0$ such that the corresponding collection of
sets $\left\{\Omega_\e\right\}_{\e>0}$, defined by \eqref{5.0a}, possesses the following properties:
\begin{enumerate}
\item[(i)] $\Omega_\e$ are  open connected subsets of $\Omega$ with Lipschitz boundaries for which there exists a positive value $\delta>0$ such that
    $$
    \partial\Omega\subset \partial\Omega_\e\quad\text{ and }\quad\mathrm{dist}\, (\Gamma_\e,\partial\Omega)>\delta,\quad \forall\,\e>0,
    $$
    where $\Gamma_\e=\partial\Omega_\e\setminus\partial\Omega$.
\item[(ii)]  The surface measure of the boundaries of holes $Q_\e=\Omega\setminus\Omega_\e$ is small enough in the following sense:
\begin{equation}
\label{5.13c}
\mathcal{H}^{N-1}(\Gamma_\e)= o(\e)\quad \forall\,\e>0.
\end{equation}
\item[(iii)] For each matrix $A\in L^2(\Omega;\mathbb{M}^N)$ such that $A^{skew}\preceq A^\ast$ a.e. in $\Omega$, and for each element
$h\in D(A)$, there is a constant $c=c(h)$ depending on $h$ and independent of $\e$ such that
\begin{equation}
\label{5.0c}
\left|\int_{\Omega\setminus\Omega_\e} \big(\nabla \varphi,A^{skew}\nabla h\big)_{\mathbb{R}^N}\,dx\right|\le c(h)\sqrt{\frac{|\Omega\setminus\Omega_\e|}{\e}} \left(\int_{\Omega\setminus\Omega_\e} |\nabla \varphi|^2_{\mathbb{R}^N}\,dx\right)^{1/2}
\end{equation}
 for all $\varphi\in C^\infty_0(\mathbb{R}^N)$.
\end{enumerate}
\end{definition}

Thus, if $A^\ast$ is of the $\mathfrak{F}$-type, each of the sets $\Omega_\e$ is locally located on one side of its Lipschitz boundary $\partial\Omega_\e$. Moreover, in this case the boundary $\partial\Omega_\e$ can be divided into two parts $\partial\Omega_\e=\partial\Omega\cup\Gamma_\e$. Observe also that  if $A^\ast\in L^\infty\big(\Omega;\mathbb{S}^N_{skew}\big)$ then the estimate \eqref{5.0c} is obviously true for all matrices $A\in L^2(\Omega;\mathbb{M}^N)$ such that $A^{skew}\preceq A^\ast$.

\begin{remark}
\label{Rem 5.1.0}
As immediately follows from Definition~\ref{Def 5.1}, the sequence of perforated domains $\left\{\Omega_\e\right\}_{\e>0}$ is monotonically expanding, i.e., $\Omega_{\e_k} \subset \Omega_{\e_{k+1}}$ for all $\e_k>\e_{k+1}$, and perimeters of $Q_\e$ tend to zero as $\e\to 0$.
Moreover, because of the structure of subdomains $Q_\e$ (see (\ref{5.0b})) and $L^2$-property of the matrix $A^\ast$, we have
$$
\frac{\left|\Omega\setminus\Omega_\e\right|}{\e^2}\le \int_{\Omega\setminus\Omega_\e} \|A^\ast(x)\|^2_{\mathbb{S}^N_{skew}}\,dx,\ \forall\,\e>0\quad\text{and}\quad
\lim_{\e\to 0}\|A^\ast\|_{L^2\left(\Omega\setminus\Omega_\e;\mathbb{S}^N_{skew}\right)}=0.
$$
This entails the property: $\left|\Omega\setminus\Omega_\e\right|=o(\e^2)$ and, hence,
$\lim_{\e\to 0}\left|\Omega_\e\right|=|\Omega|$.  Besides, in view of the condition~(ii) of Definition~\ref{Def 5.1}, we have
\begin{equation}
\label{5.13cc}
\frac{\e\mathcal{H}^{N-1}(\Gamma_\e)}{|\Omega\setminus\Omega_\e|}=O(1).
\end{equation}
\end{remark}

\begin{remark}
\label{Rem 5.1}
As follows from \cite{Cioran}, $\mathfrak{F}$-property of the skew-symmetric matrix $A^\ast$ implies the so-called strong connectedness of the sets $\left\{\Omega_\e\right\}_{\e>0}$ which means the existence of extension operators $P_\e$ from $H^1_0(\Omega_\e;\partial\Omega)$ to $H^1_0(\Omega)$ such that, for some positive constant $C$ independent of $\e$,
\begin{equation}
\label{5.2aa}
\left\|\nabla\left(P_\e y\right)\right\|_{L^2(\Omega;\mathbb{R}^N)}\le C \left\|\nabla  y \right\|_{L^2(\Omega_\e;\mathbb{R}^N)},\quad\forall\, y\in H^1_0(\Omega_\e;\partial\Omega).
\end{equation}
\end{remark}
\begin{remark}
\label{Rem 5.13}
It is easy to see that in view of the conditions (1)--(ii) of Definition~\ref{Def 5.1} and the Sobolev Trace Theorem \cite{Adams}, for all $\e>0$ small enough, the inequality
\begin{equation}
\label{5.13d}
\|\varphi\|_{L^2(\Gamma_\e)}\le \frac{C}{\sqrt{\mathcal{H}^{N-1}(\Gamma_\e)}}\, \|\varphi\|_{H^1_0(\Omega_\e;\partial\Omega)},\quad \forall\varphi\in C^\infty_0(\Omega)
\end{equation}
holds true with a constant $C=C(\Omega)$ independent of $\e$.
\end{remark}

As a direct consequence of Definition \ref{Def 5.1}, we have the following obvious result.
\begin{proposition}
\label{Prop 5.3}
Assume that $A^\ast\in L^2\big(\Omega;\mathbb{S}^N_{skew}\big)$ is of the $\mathfrak{F}$-type.
Let $\left\{\Omega_\e\right\}_{\e>0}$ be a sequence of perforated domains of $\Omega$ given by \eqref{5.0b}, and let
$\left\{\chi_{\Omega_\e}\right\}_{\e>0}$ be the corresponding sequence of characteristic functions. Then
\begin{equation}
\label{5.3a}
\chi_{\Omega_\e}\rightarrow \chi_{\Omega}\quad\text{strongly in }\ L^2(\Omega)\ \text{ and weakly-$\ast$ in }\ L^\infty(\Omega).
\end{equation}
\end{proposition}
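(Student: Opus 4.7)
The plan is to reduce both convergences to the single quantitative fact $|\Omega\setminus\Omega_\e|\to 0$, which is already recorded in Remark~\ref{Rem 5.1.0} as a consequence of the $L^2$-integrability of $A^\ast$ and the defining property \eqref{5.0b} of the holes $Q_\e$.

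First I would observe that by construction $\Omega_\e\subset\Omega$ for every $\e>0$, so pointwise on $\Omega$ one has $\chi_\Omega(x)-\chi_{\Omega_\e}(x)=\chi_{\Omega\setminus\Omega_\e}(x)$. This identity trivializes the $L^2$-statement: for each $\e>0$,
\[
\|\chi_{\Omega_\e}-\chi_\Omega\|^2_{L^2(\Omega)}=\int_\Omega \chi_{\Omega\setminus\Omega_\e}\,dx=|\Omega\setminus\Omega_\e|.
\]
Since $A^\ast\in L^2(\Omega;\mathbb{S}^N_{skew})$ and the sets $Q_\e=\Omega\setminus\Omega_\e$ are defined through the level sets $\{\|A^\ast\|_{\mathbb{S}^N_{skew}}\ge \e^{-1}\}$, Chebyshev's inequality gives $|\Omega\setminus\Omega_\e|\le \e^2\int_{Q_\e}\|A^\ast\|^2_{\mathbb{S}^N_{skew}}\,dx$, and the right-hand side is $o(\e^2)$ by absolute continuity of the integral. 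This is precisely the estimate already stated in Remark~\ref{Rem 5.1.0}, and it yields the strong $L^2$-convergence.

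For the weak-$\ast$ convergence in $L^\infty(\Omega)$, I would use that $0\le \chi_{\Omega_\e}\le 1$ everywhere, so $\{\chi_{\Omega_\e}\}_{\e>0}$ is bounded in $L^\infty(\Omega)$. For any test function $g\in L^1(\Omega)$,
\[
\int_\Omega(\chi_{\Omega_\e}-\chi_\Omega)\,g\,dx=-\int_{\Omega\setminus\Omega_\e}g\,dx,
\]
which tends to zero as $\e\to 0$ by absolute continuity of the Lebesgue integral, since $|\Omega\setminus\Omega_\e|\to 0$. This gives the claimed weak-$\ast$ convergence.

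There is no real obstacle here: the two convergences are straightforward consequences of the measure estimate $|\Omega\setminus\Omega_\e|\to 0$, which is itself a direct byproduct of Definition~\ref{Def 5.1} through the $L^2$-membership of $A^\ast$. The only subtlety to flag is that the stronger bound $|\Omega\setminus\Omega_\e|=o(\e^2)$ of Remark~\ref{Rem 5.1.0} is not needed for Proposition~\ref{Prop 5.3} itself; mere vanishing of the measure suffices, and the rate $o(\e^2)$ will only be used later (in connection with the trace estimate \eqref{5.13d} and the scaled boundary term in \eqref{0.2}).
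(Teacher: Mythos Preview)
Your proof is correct and is exactly the argument the paper has in mind: the paper states Proposition~\ref{Prop 5.3} as an ``obvious result'' with no proof, relying on the measure estimate $|\Omega\setminus\Omega_\e|\to 0$ from Remark~\ref{Rem 5.1.0}, and you have simply written out the two-line verification of each convergence from that estimate. There is nothing to add.
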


\begin{definition}
\label{Def 5.3.1}
We say that a sequence $\left\{y_\e\in H^1_0(\Omega_\e;\partial\Omega)\right\}_{\e>0}$ is weakly convergent in variable spaces $H^1_0(\Omega_\e;\partial\Omega)$ if there exists an element $y\in H^1_0(\Omega)$ such that
\[
\lim_{\e\to 0}\int_{\Omega_\e} \left(\nabla y_\e,\nabla \varphi\right)_{\mathbb{R}^N}\,dx=
\int_{\Omega} \left(\nabla y,\nabla \varphi\right)_{\mathbb{R}^N}\,dx,\quad\forall\,\varphi\in C^\infty_0(\Omega)
\]
\end{definition}
\begin{remark}
\label{Rem 5.3.1}
Let $y^\ast\in H^1_0(\Omega)$ be a weak limit in $H^1_0(\Omega)$  of the extended functions  $\left\{P_\e y_\e\in H^1_0(\Omega)\right\}_{\e>0}$.
Since
\begin{gather*}
\int_{\Omega} \left(\nabla y,\nabla \varphi\right)_{\mathbb{R}^N}\,dx=\lim_{\e\to 0}\int_{\Omega_\e} \left(\nabla y_\e,\nabla \varphi\right)_{\mathbb{R}^N}\,dx =
\lim_{\e\to 0}\int_{\Omega} \left(\nabla \left(P_\e y_\e\right),\nabla \varphi\right)_{\mathbb{R}^N}\chi_{\Omega_\e}\,dx\\\, \stackrel{\text{by \eqref{5.3a} and \eqref{5.2aa}}}{=}\,
\int_{\Omega} \left(\nabla y^\ast,\nabla \varphi\right)_{\mathbb{R}^N}\,dx,\quad\forall\,\varphi\in C^\infty_0(\Omega),
\end{gather*}
it follows that
$\ds\lim_{\e\to 0}\int_{\Omega_\e} \left(\nabla y_\e,\nabla \varphi\right)_{\mathbb{R}^N}\,dx =
\lim_{\e\to 0}\int_{\Omega} \left(\nabla \left(P_\e y_\e\right),\nabla \varphi\right)_{\mathbb{R}^N}\,dx$
and, hence, the weak limit in the sense of Definition \ref{Def 5.3.1} does not depend on the choice of extension operators $P_\e:H^1_0(\Omega_\e;\partial\Omega)\to H^1_0(\Omega)$ with the properties \eqref{5.2aa}.
\end{remark}

Let us consider the following sequence of regularized OCPs associated with perforated domains $\Omega_\e$
\begin{equation}
\label{5.4} \left\{\ \left<\inf_{(A,v,y)\in\Xi_\e}
I_\e(A,v,y)\right>,\quad \e\to 0 \right\},
\end{equation}
where
\begin{gather}
\label{5.5a}
I_\e(A,v,y):=\left\|y-y_d\right\|^2_{L^2(\Omega_\e)}
 + \int_{\Omega_\e}\left(\nabla y, A^{sym}\nabla y\right)_{\mathbb{R}^N}\,dx + \frac{1}{\e^\sigma}\|v\|^2_{H^{-\frac{1}{2}}(\Gamma_\e)},\\[2ex]
\label{5.5b}
\Xi_\e=\left\{(A,v,y)\ \left|\
\begin{array}{c}
-\div\big(A\nabla y\big) = f_\e\quad\text{in }\ \Omega_\e,\\[1ex]
y=0\text{ on }\partial\Omega,\quad
\partial y/ \partial \nu_{A}=v\ \text{on }\Gamma_\e,\\[1ex]
v\in H^{-\frac{1}{2}}(\Gamma_\e),\ y\in H^1_0(\Omega_\e;\partial\Omega),\\[1ex]
A=A^{sym}+A^{skew},\\[1ex]
A\in \mathfrak{A}^\e_{ad}=\mathfrak{A}_{ad,1}\oplus \mathfrak{A}^\e_{ad,2},\
\mathfrak{A}^\e_{ad,2}=U_{a,2}\cap U_{b,2}^\e,\\[1ex]
U_{b,2}^\e=\big\{ A^{skew}=[a_{i\,j}]\in L^2(\Omega;\mathbb{S}^N_{skew})\ :\\[1ex]
\ A^{skew}(x)\preceq A^\ast(x)\ \text{a.e. in }\ \Omega_\e\big\}.
\end{array}
\right.\right\}.
\end{gather}
Here, $y_d\in L^2(\Omega)$ and $f_\e\in
L^2(\Omega)$ are given functions,  $\nu$ is the outward normal unit vector at $\Gamma_\e$ to $\Omega_\e$,
$v\in H^{-\frac{1}{2}}(\Gamma_\e)$ is considered as a fictitious control, and $\sigma$ is a positive number such that
\begin{equation}
\label{5.5c}
\e^{-\sigma}\mathcal{H}^{N-1}(\Gamma_\e)\rightarrow 0\quad\text{as }\ \e\to 0\quad(\text{see \eqref{5.13c}}).
\end{equation}

Using the fact that $A\in L^\infty(\Omega_\e;\mathbb{M}^N)$ for every $\e>0$ and each $A\in \mathfrak{A}^\e_{ad}$, we arrive at the following obvious result.
\begin{theorem}
\label{Th 5.6}
For every $\e>0$ the problem $\left<\inf_{(A,v,y)\in\Xi_\e}
I_\e(A,v,y)\right>$ admits at least one minimizer  $(A^0_\e,v^0_\e,y^0_\e)\in\Xi_\e$.
\end{theorem}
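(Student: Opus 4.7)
The plan is to apply the direct method of the calculus of variations, exploiting the crucial point that, for fixed $\e>0$, every admissible skew-symmetric part of $A$ is $L^\infty$-bounded by $\e^{-1}$ on $\Omega_\e$. First I would verify $\Xi_\e\neq\emptyset$ by taking $A^{skew}=0$, $v=0$, and any $A^{sym}\in\mathfrak{A}_{ad,1}$; the state problem reduces to the classical uniformly elliptic Dirichlet problem with source $f_\e\in L^2(\Omega)$, which has a unique weak solution in $H^1_0(\Omega_\e;\partial\Omega)$. Since $I_\e\ge 0$, the infimum is a finite nonnegative number, and a minimizing sequence $\{(A_n,v_n,y_n)\}\subset\Xi_\e$ exists with $I_\e(A_n,v_n,y_n)\to\inf_{\Xi_\e}I_\e$.

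The next step is to extract weakly convergent subsequences. From $\frac{1}{\e^\sigma}\|v_n\|^2_{H^{-1/2}(\Gamma_\e)}\le I_\e(A_n,v_n,y_n)\le C$ I obtain weak compactness of $\{v_n\}$ in $H^{-1/2}(\Gamma_\e)$. By Proposition~\ref{Prop 2.3f} (and the $L^\infty$-truncation built into $U_{b,2}^\e$) the class $\mathfrak{A}^\e_{ad}$ is sequentially compact in $L^2(\Omega;\mathbb{M}^N)$, so I can pass to a subsequence with $A_n\to A^0$ strongly in $L^2$; since the $A_n$ are equi-bounded in $L^\infty(\Omega_\e)$, Lebesgue dominated convergence upgrades this to strong convergence in every $L^p(\Omega_\e;\mathbb{M}^N)$, $p<\infty$. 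For the states, I test the state equation with $y_n$ itself; the skew-symmetric contribution vanishes pointwise because $(\xi,A^{skew}\xi)_{\mathbb{R}^N}\equiv 0$, and the coercivity $A^{sym}_n\in\mathfrak{M}^\beta_\alpha$ together with the trace inequality \eqref{5.13d} and the uniform bound on $\|v_n\|_{H^{-1/2}(\Gamma_\e)}$ yields $\|\nabla y_n\|_{L^2(\Omega_\e)}\le C_\e$. Hence, up to subsequences, $y_n\rightharpoonup y^0$ weakly in $H^1_0(\Omega_\e;\partial\Omega)$.

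It remains to pass to the limit in the integral identity defining $\Xi_\e$ and to establish lower semicontinuity of $I_\e$. The products $A^{sym}_n\nabla y_n$ and $A^{skew}_n\nabla y_n$ converge weakly in $L^2(\Omega_\e;\mathbb{R}^N)$ because of strong $L^p$-convergence of the coefficients against weak $L^2$-convergence of $\nabla y_n$; the boundary pairing passes to the limit since $v_n\rightharpoonup v^0$ in $H^{-1/2}(\Gamma_\e)$ and the trace $y_n|_{\Gamma_\e}\to y^0|_{\Gamma_\e}$ strongly in $H^{1/2}(\Gamma_\e)$ by compactness of the trace operator on the fixed Lipschitz boundary $\Gamma_\e$. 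This shows $(A^0,v^0,y^0)\in\Xi_\e$. Finally, $\|\cdot-y_d\|^2_{L^2(\Omega_\e)}$ and $\|\cdot\|^2_{H^{-1/2}(\Gamma_\e)}$ are convex continuous, hence weakly lsc, and for the quadratic form $\int_{\Omega_\e}(\nabla y,A^{sym}\nabla y)_{\mathbb{R}^N}\,dx$ I invoke the standard Ioffe-type lower semicontinuity result for integrands convex in the gradient variable and continuous in the coefficient (the coefficient converges strongly while $\nabla y_n$ converges weakly). Combining these gives $I_\e(A^0,v^0,y^0)\le\liminf_n I_\e(A_n,v_n,y_n)=\inf_{\Xi_\e}I_\e$, so $(A^0,v^0,y^0)$ is a minimizer.

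The main technical obstacle I expect is the a priori bound on $y_n$: it is crucial that the skew part drops out of the energy identity (so that the merely $L^2$-character of $A^{skew}$ does not obstruct coercivity) and that the boundary term $\langle v_n,y_n\rangle_{H^{-1/2}(\Gamma_\e);H^{1/2}(\Gamma_\e)}$ is absorbed via the trace inequality with an $\e$-dependent but finite constant; all other steps are standard weak/strong compactness arguments made possible by the $L^\infty$-truncation defining $\mathfrak{A}^\e_{ad}$.
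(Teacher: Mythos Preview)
Your direct-method argument is the natural way to establish the result, and it is essentially correct. Note that the paper itself gives no proof whatsoever: it simply remarks that ``$A\in L^\infty(\Omega_\e;\mathbb{M}^N)$ for every $\e>0$ and each $A\in \mathfrak{A}^\e_{ad}$'' and then declares the theorem an ``obvious result.'' Your write-up therefore supplies exactly the details the authors suppress, and the key observations you isolate (the $L^\infty$-bound on $\Omega_\e$ making the problem uniformly elliptic with bounded coefficients, the vanishing of the skew part in the energy identity, and compactness of $\mathfrak{A}^\e_{ad}$) are precisely the ones the paper relies on implicitly.

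One small slip: when you pass to the limit in the integral identity defining $\Xi_\e$, the boundary term is $\langle v_n,\varphi\rangle_{H^{-1/2}(\Gamma_\e);H^{1/2}(\Gamma_\e)}$ with a \emph{fixed} test function $\varphi$, not with $y_n$. So the limit passes directly from $v_n\rightharpoonup v^0$ in $H^{-1/2}(\Gamma_\e)$, and you do not need the claim that $y_n|_{\Gamma_\e}\to y^0|_{\Gamma_\e}$ strongly in $H^{1/2}(\Gamma_\e)$. That claim is in fact false in general: the trace operator $H^1_0(\Omega_\e;\partial\Omega)\to H^{1/2}(\Gamma_\e)$ is bounded and onto but not compact (only the trace into $H^s(\Gamma_\e)$ for $s<1/2$, in particular into $L^2(\Gamma_\e)$, is compact). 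Since the incorrect statement is also unnecessary, the overall argument stands.
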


In order to study the asymptotic behavior of the sequences of admissible solutions
$\left\{(A_\e,v_\e,y_\e)\in  \Xi_\e\subset \mathfrak{A}^\e_{ad}\times H^{-\frac{1}{2}}(\Gamma_\e)\times H^1_0(\Omega_\e;\partial\Omega)\right\}_{\e>0}$
in the scale of variable spaces,  we adopt the following concept.
\begin{definition}
\label{Def 5.10} We
say that a sequence
$\left\{(A_\e,v_\e,y_\e)\in  \Xi_\e\right\}_{\e>0}$ weakly converges to a pair $(A,y)\in \mathfrak{A}_{ad}\times H^1_0(\Omega)$ in the scale of spaces
\begin{equation}
\label{5.10a}
\left\{L^2(\Omega;\mathbb{M}^N)\times H^{-\frac{1}{2}}(\Gamma_\e)\times H^1_0(\Omega_\e;\partial\Omega)\right\}_{\e>0},
\end{equation}
(shortly, $(A_\e,v_\e,y_\e)\,\stackrel{w}{\rightarrow}\, (A,y)$), if
\begin{align}
\label{5.10aa}
A_\e:=A_\e^{sym}+A_\e^{skew}&\rightarrow {A}^{sym}+{A}^{skew}=:{A}\ \text{ in }\ L^2(\Omega;\mathbb{M}^N),\\
\label{4.5.1ab}
A_\e^{sym}&\rightarrow {A}^{sym}\quad\text{in }\ L^p(\Omega;\mathbb{S}^N_{sym}),\ \forall\,p\in [1,+\infty),\\
\label{4.5.1ac}
A_\e^{sym}\,&\stackrel{\ast}{\rightharpoonup}\, {A}^{sym}\quad\text{in }\ L^\infty(\Omega;\mathbb{S}^N_{sym}),\\
\label{4.5.1ad} A_\e^{skew}&\rightarrow {A}^{skew}\quad\text{in }\ L^2(\Omega;\mathbb{S}^N_{skew}),\\
\label{5.10b}
 y_\e&\rightharpoonup y\ \text{ in }\ H^1_0(\Omega_\e;\partial\Omega), \\
\label{5.10c}
\text{and}\quad \sup_{\e>0}&\frac{1}{\mathcal{H}^{N-1}(\Gamma_\e)}\,\|v_\e\|^2_{H^{-\frac{1}{2}}(\Gamma_\e)}<+\infty.
\end{align}
\end{definition}

We are now in a position to state the main result of this section.
\begin{theorem}
\label{Th 5.13}
Assume that the matrix $A^\ast\in L^2\big(\Omega;\mathbb{S}^N_{skew}\big)$ is of the $\mathfrak{F}$-type.
Let
$\left\{\Omega_\e\right\}_{\e>0}$ be a sequence of perforated subdomains of $\Omega$ associated with matrix $A^\ast$.
Let $f\in H^{-1}(\Omega)$ and  $y_d\in  L^2(\Omega)$ be given distributions.
Then the original optimal control problem $\left<\inf_{(A,y)\in\Xi} I(A,y)\right>$, where the sequence $\left\{f_\e\in L^2(\Omega)\right\}_{\e>0}$ is such that $\chi_{\Omega_\e}f_\e\rightarrow f$ strongly in $H^{-1}(\Omega)$, is
variational limit of the sequence \eqref{5.4}--\eqref{5.5b} as the parameter $\e$ tends to zero.
\end{theorem}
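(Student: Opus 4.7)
My plan is to verify the two conditions of Definition~\ref{Def 1.4} using the $\mu$-convergence from Definition~\ref{Def 5.10}, which will identify $\langle\inf_{(A,y)\in\Xi}I(A,y)\rangle$ as the variational limit of the sequence \eqref{5.4}--\eqref{5.5b}.

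\textbf{Lower bound (d).} Given $(A_\e,v_\e,y_\e)\in\Xi_\e$ with $(A_\e,v_\e,y_\e)\stackrel{w}{\rightarrow}(A,y)$, admissibility $A\in\mathfrak{A}_{ad}$ follows from Proposition~\ref{Prop 2.3f} via \eqref{5.10aa}, and $y\in H^1_0(\Omega)$ arises as the weak limit of the extensions $P_\e y_\e$ (Remark~\ref{Rem 5.1}, Definition~\ref{Def 5.3.1}). For each $\varphi\in C^\infty_0(\Omega)$ I will pass to the limit in the $\Xi_\e$-equation: the symmetric bulk by \eqref{4.5.1ac} together with the weak $L^2$-convergence of $\nabla P_\e y_\e$; the skew-symmetric bulk via the identity $(A^{skew}_\e\nabla y_\e,\nabla\varphi)=-(A^{skew}_\e\nabla\varphi,\nabla y_\e)$ combined with the strong $L^2$-convergence $A^{skew}_\e\nabla\varphi\to A^{skew}\nabla\varphi$ granted by \eqref{4.5.1ad}; the boundary pairing vanishes because
\[
|\langle v_\e,\varphi|_{\Gamma_\e}\rangle|\le\|v_\e\|_{H^{-\frac12}(\Gamma_\e)}\|\varphi\|_{H^{\frac12}(\Gamma_\e)}\le C\sqrt{\mathcal{H}^{N-1}(\Gamma_\e)}\,\|\varphi\|_{H^1(\Omega)}\to 0
\]
by \eqref{5.10c} and \eqref{5.13c}; and the right-hand side converges by the hypothesis on $f_\e$. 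The resulting identity gives \eqref{2.5} and, by the $H^{-1}$-control of its right-hand side, forces $y\in D(A)$. The inequality $I(A,y)\le\liminf I_\e(A_\e,v_\e,y_\e)$ then follows from weak lower semicontinuity of the two quadratic summands (using coercivity \eqref{1.1}) and non-negativity of the penalty.

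\textbf{Recovery sequence (dd).} For $(A,y)\in\Xi$ I will take $A_\e:=A$ (its skew part lies in $\mathfrak{A}^\e_{ad,2}$ since $\|A^\ast\|\le\e^{-1}$ on $\Omega_\e$), $y_\e:=y|_{\Omega_\e}$ modified by a small $H^1_0$-corrector absorbing the discrepancy between $f$ and $f_\e$, and $v_\e\in H^{-\frac12}(\Gamma_\e)$ defined by the Green identity so that $(A_\e,v_\e,y_\e)\in\Xi_\e$. The convergences \eqref{5.10aa}--\eqref{5.10b} are immediate. The delicate step is showing $\e^{-\sigma}\|v_\e\|^2_{H^{-\frac12}(\Gamma_\e)}\to 0$. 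For $\widetilde\varphi\in H^1_0(\Omega_\e;\partial\Omega)$ I will use its extension $P_\e\widetilde\varphi\in H^1_0(\Omega)$ and the density extension of \eqref{2.5} for $y\in D(A)$ to rewrite
\[
\langle v_\e,\widetilde\varphi|_{\Gamma_\e}\rangle=\langle f-\chi_{\Omega_\e}f_\e,P_\e\widetilde\varphi\rangle-\int_{\Omega\setminus\Omega_\e}(A\nabla y,\nabla P_\e\widetilde\varphi)\,dx.
\]
The first term is $o(1)\|\widetilde\varphi\|_{H^1_0(\Omega_\e;\partial\Omega)}$ by the hypothesis on $f_\e$ and \eqref{5.2aa}; the symmetric part of the second is bounded by $\|A^{sym}\|_{L^\infty}\|\nabla y\|_{L^2(\Omega\setminus\Omega_\e)}\|\widetilde\varphi\|_{H^1_0}$ and vanishes because $|\Omega\setminus\Omega_\e|\to 0$; the skew-symmetric part is controlled through the $\mathfrak{F}$-type inequality \eqref{5.0c} by $c(y)\sqrt{|\Omega\setminus\Omega_\e|/\e}\,\|\widetilde\varphi\|_{H^1_0}=o(\sqrt{\e})\,\|\widetilde\varphi\|_{H^1_0}$ in view of Remark~\ref{Rem 5.1.0}. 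Duality then yields $\|v_\e\|^2_{H^{-\frac12}(\Gamma_\e)}=o(\e)$, which combined with \eqref{5.5c} kills the penalty. The remaining summands of $I_\e$ tend termwise to those of $I$, so $\limsup I_\e(A_\e,v_\e,y_\e)\le I(A,y)$.

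\textbf{Main obstacle.} The principal difficulty is not the limit passage against \emph{smooth} test functions but the coordinated control in (dd) of three simultaneously small quantities — $\mathcal{H}^{N-1}(\Gamma_\e)=o(\e)$ from \eqref{5.13c}, $|\Omega\setminus\Omega_\e|=o(\e^2)$ from Remark~\ref{Rem 5.1.0}, and the penalty scaling \eqref{5.5c} — together with the $\mathfrak{F}$-type inequality \eqref{5.0c}. As flagged in the introduction, the same mechanism will be needed to identify $\lim_{\e\to 0}\langle v^0_\e,y^0_\e\rangle_{H^{-\frac12}(\Gamma_\e);H^{\frac12}(\Gamma_\e)}$ when testing the regularized state equation with $y_\e$ itself — a pairing of two sequences only weakly convergent in variable spaces, whose closed-form resolution via this calibration is the principal technical hurdle of Sections~\ref{Sec 5}--\ref{Sec 6}.
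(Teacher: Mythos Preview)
Your treatment of condition (d) is essentially the paper's: pass to the limit in the weak formulation using \eqref{5.10aa}--\eqref{5.10c} and \eqref{5.13c}, then invoke lower semicontinuity of the quadratic summands. That part is fine.

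For condition (dd) you take a genuinely different route. The paper does \emph{not} restrict $y$ to $\Omega_\e$; instead it fixes $A_\e=A$, chooses the fictitious control $v_\e=w_\e+\partial h/\partial\nu_A$ with $h\in L(A)$, and lets $y_\e$ be the \emph{solution} of the mixed problem \eqref{5.13.1aa}--\eqref{5.13.2aa}. Membership in $\Xi_\e$ is then automatic and the bound $\|v_\e\|^2_{H^{-1/2}(\Gamma_\e)}\le C\,\mathcal{H}^{N-1}(\Gamma_\e)$ follows directly from \eqref{5.20a.1}--\eqref{5.20a.2}, so \eqref{5.5c} kills the penalty. The cost is that $y_\e\rightharpoonup y$ only weakly, and the paper then needs the non-trivial device of the extended energy identity \eqref{5.26.1} with a calibrated $\widehat h^\ast\in L(A)$ to obtain \eqref{5.25}. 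Your restriction approach would trade this energy argument for strong convergence $y_\e\to y$, which is attractive --- but it only works if you can control $\|v_\e\|_{H^{-1/2}(\Gamma_\e)}$ at the right rate, and that is where your sketch breaks.

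Concretely: in your displayed identity for $\langle v_\e,\widetilde\varphi|_{\Gamma_\e}\rangle$ the term $\langle f-\chi_{\Omega_\e}f_\e,P_\e\widetilde\varphi\rangle$ is bounded only by $\|f-\chi_{\Omega_\e}f_\e\|_{H^{-1}(\Omega)}\,\|\widetilde\varphi\|_{H^1_0}$, which is merely $o(1)$ with no prescribed rate, and the symmetric part $\|\nabla y\|_{L^2(\Omega\setminus\Omega_\e)}$ is likewise only $o(1)$. Hence your conclusion $\|v_\e\|^2_{H^{-1/2}(\Gamma_\e)}=o(\e)$ does not follow; you only get $o(1)$, and $\e^{-\sigma}\cdot o(1)$ need not tend to zero. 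The ``small $H^1_0$-corrector'' you mention does not repair this: if $z_\e\in H^1_0(\Omega_\e)$ absorbs $f-f_\e$, then its contribution to the boundary functional contains $\int_{\Omega_\e}(A^{skew}\nabla z_\e,\nabla\widetilde\varphi)\,dx$, and on $\Omega_\e$ one only has $\|A^{skew}\|_{L^\infty(\Omega_\e)}\le C\e^{-1}$, so this term is of order $\e^{-1}\|z_\e\|_{H^1_0}\|\widetilde\varphi\|_{H^1_0}=\e^{-1}o(1)$, which is worse. Finally, even the part of your estimate that is correct --- the $o(\sqrt\e)$ coming from \eqref{5.0c} --- does not connect to \eqref{5.5c} without passing through $|\Omega\setminus\Omega_\e|/\e\le C\,\mathcal{H}^{N-1}(\Gamma_\e)$, which requires the two-sided reading of \eqref{5.13cc} and should be stated. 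The paper's construction sidesteps all of these rate issues by choosing $v_\e$ first with the bound $\|v_\e\|^2\le C\,\mathcal{H}^{N-1}(\Gamma_\e)$ built in.
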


\begin{proof}
Since each of the optimization problems $\left<\inf_{(A,v,y)\in\Xi_\e}
I_\e(A,v,y)\right>$ lives in the corresponding space $\mathfrak{A}^\e_{ad}\times H^{-\frac{1}{2}}(\Gamma_\e)\times H^1_0(\Omega_\e;\partial\Omega)$, we have to show that in this case  all conditions of Definition~\ref{Def 1.4} hold true. To do so, we divide this proof into two steps.

Step~1. We show on this step that condition (dd) of Definition~\ref{Def 1.4} holds true.  Let $(A,y)\in\Xi$ be an arbitrary admissible pair to the original OCP \eqref{2.3}--\eqref{2.3a}.
We will indicate two cases.
\begin{enumerate}
\item[Case~1.] The set $L(A)$, defined in \eqref{4.17aa}, is a singleton. It means that $h\equiv 0$ is a unique solution of homogeneous problem \eqref{4.17a};
\item[Case~2.] The set $L(A)$ is not a singleton. So, we suppose that the set $L(A)$ is a linear subspace of $H^1_0(\Omega)$ and it contains at least one non-trivial element of $D(A)\subset H^1_0(\Omega)$.
\end{enumerate}

We start with the Case~2. Let $h\in D(A)$ be a element of the set $L(A)$ such that $h$ is a non-trivial solution of homogeneous problem \eqref{4.17a}. In the sequel, the choice of element $h\in L(A)$ will be specified (see \eqref{5.22}). Then we construct a $(\Gamma,0)$-realizing sequence $\left\{(A_\e,v_\e,y_\e)\in\Xi_\e\right\}_{\varepsilon>0}$ in the following way:
\begin{enumerate}
\item[(j)] $A_\e=A$ for all $\e>0$.  In view of definition of the set $\mathfrak{A}_{ad}^\e$, we obviously have that $\left\{A_\e\in \mathfrak{A}_{ad}^\e\subset L^2(\Omega;\mathbb{M}^N)\right\}_{\e>0}$ is a sequence of admissible controls to the problems \eqref{5.4}.
    Note that in this case the properties \eqref{5.10aa}--\eqref{4.5.1ad} are obviously true for the sequence $\left\{A_\e\right\}_{\e>0}$.
 \item[(jj)] Fictitious controls $\left\{v_\e\in H^{-\frac{1}{2}}(\Gamma_\e)\right\}_{\e>0}$ are defined as follows
\begin{equation}
\label{5.13a}
v_\e:= w_\e+\frac{\partial h}{\partial\nu_{A_\e}}\quad\,\forall\, \e>0,
\end{equation}
where distributions $w_\e$ are such that
\begin{equation}
\label{5.20a.2}
\sup_{\e> 0}\left( \frac{1}{\sqrt{\mathcal{H}^{N-1}(\Gamma_\e)}}\, \left\|w_\e\right\|_{H^{-\frac{1}{2}}(\Gamma_\e)}\right)\le C.
\end{equation}
\item[(jjj)] $\left\{y_\e\in H^1_0(\Omega_\e;\partial\Omega)\right\}_{\e>0}$ is the sequence of weak solutions to the corresponding boundary value problems
\begin{gather}
\label{5.13.1aa}
    -\div\big(A\nabla y_\e\big) = -\div\big(A^{sym}\nabla y_\e+A^{skew}\nabla y_\e\big) =f_\e\quad\text{in }\ \Omega_\e,\\
\label{5.13.2aa}
y_\e=0\text{ on }\partial\Omega,\quad
\partial y_\e/ \partial \nu_{A}=v_\e\ \text{on }\Gamma_\e.
\end{gather}
\end{enumerate}
Since $A=\mathbb{T}_\e (A)$ whenever $x\in\Omega_\e$ for every $\e>0$, it means that $A\in L^\infty(\Omega_\e;\mathbb{M}^N)$.
Hence, due to the Lax-Milgram lemma and the superposition principle, the sequence $\left\{y_\e\in H^1_0(\Omega_\e;\partial\Omega)\right\}_{\e>0}$ is defined in a unique way and for every $\e>0$ we have the following decomposition
$y_\e=y_{\e,1}+y_{\e,2}$, where $y_{\e,1}$ and $y_{\e,2}$ are elements of $H^1_0(\Omega_\e)$  such that (hereinafter, we suppose that the functions $y_\e$ of $H^1_0(\Omega_\e,\partial\Omega)$ are extended by operators $P_\e$ outside of $\Omega_\e$)
\begin{alignat}{2}
   \int_{\Omega} \big(\nabla \varphi,A^{sym}\nabla y_{\e,1}&+A^{skew}\nabla y_{\e,1}\big)_{\mathbb{R}^N}\chi_{\Omega_\e}\,dx
=\int_{\Omega}f_\e \chi_{\Omega_\e}\varphi\,dx\notag\\
\label{5.14}
 & +
 \left< w_\e,\varphi\right>_{H^{-\frac{1}{2}}(\Gamma_\e);H^{\frac{1}{2}}(\Gamma_\e)},
 \quad\forall\,\varphi\in C^\infty_0(\Omega;\partial\Omega),\\
\notag
 \int_{\Omega} \big(\nabla \varphi,A^{sym}\nabla y_{\e,2}&+A^{skew}\nabla y_{\e,2}\big)_{\mathbb{R}^N}\chi_{\Omega_\e}\,dx\\
&=\left<\frac{\partial h}{\partial\nu_{A}},\varphi\right>_{H^{-\frac{1}{2}}(\Gamma_\e);H^{\frac{1}{2}}(\Gamma_\e)},\
\forall\,\varphi\in C^\infty_0(\Omega;\partial\Omega).\label{5.15}
\end{alignat}

By the skew-symmetry property of $A^{skew}\in L^\infty(\Omega_\e;\mathbb{S}^N_{skew})$, we have
$$
\int_{\Omega} \big(\nabla y_{\e,i},A^{skew}\nabla y_{\e,i}\big)_{\mathbb{R}^N}\chi_{\Omega_\e}\,dx=0,\quad  i=1,2.
$$
Then \eqref{5.14}--\eqref{5.15} lead us to the energy equalities
\begin{alignat}{2}
   \int_{\Omega} \big(\nabla y_{\e,1}, A^{sym}\nabla y_{\e,1}\big)_{\mathbb{R}^N}\chi_{\Omega_\e}\,dx
&=\int_{\Omega}f_\e \chi_{\Omega_\e}y_{\e,1}\,dx\notag\\
\label{5.16}
 &+
 \left< w_\e,y_{\e,1}\right>_{H^{-\frac{1}{2}}(\Gamma_\e);H^{\frac{1}{2}}(\Gamma_\e)},\\
\label{5.17}
 \int_{\Omega} \big(\nabla y_{\e,2}, A^{sym}\nabla y_{\e,2}\big)_{\mathbb{R}^N}\chi_{\Omega_\e}\,dx
&=\left<\frac{\partial h}{\partial\nu_{A}},y_{\e,2}\right>_{H^{-\frac{1}{2}}(\Gamma_\e);H^{\frac{1}{2}}(\Gamma_\e)}.
\end{alignat}
By the initial assumptions, we have $h\in L(A)$. Then the condition (iii) of Definition~\ref{Def 5.1} implies that
(for the details we refer to \cite{KPI})
\begin{multline*}
\left|\left<\frac{\partial h}{\partial\nu_{A}},\varphi\right>_{H^{-\frac{1}{2}}(\Gamma_\e);H^{\frac{1}{2}}(\Gamma_\e)}\right|=
\left|\int_{\Omega\setminus\Omega_\e} \big(\nabla \varphi,A^{sym}\nabla h+A^{skew}\nabla h\big)_{\mathbb{R}^N}\,dx\right|\\
\le \sqrt{\frac{|\Omega\setminus\Omega_\e|}{\e}}\left(C_1(h)+C_2(h)\right)\|\varphi\|_{H^1(\Omega\setminus\Omega_\e)}\\
\stackrel{\text{by \eqref{5.13c}}}{\le}\,
C(h)\sqrt{\mathcal{H}^{N-1}(\Gamma_\e)}\|\varphi\|_{H^1(\Omega\setminus\Omega_\e)},\quad\forall\,\varphi \in H^1_0(\Omega)
\end{multline*}
with some constant $C(h)$ independent of $\e$.
Hence,
\begin{equation}
\label{5.20a.1}
\sup_{\e>0}\left(\mathcal{H}^{N-1}(\Gamma_\e)\right)^{-1}\,\Big\|\frac{\partial h}{\partial\nu_{A}}\Big\|^2_{H^{-\frac{1}{2}}(\Gamma_\e)}<C(h)<+\infty.
\end{equation}
Thus, using the continuity of the embedding $H^{\frac{1}{2}}(\Gamma_\e)\hookrightarrow L^2(\Gamma_\e)$ and Sobolev Trace Theorem, we get
\begin{alignat}{2}
\notag
\Big|\left< w_\e,y_{\e,1}\right>_{H^{-\frac{1}{2}}(\Gamma_\e);H^{\frac{1}{2}}(\Gamma_\e)}\Big|&\stackrel{\text{by \eqref{5.20a.2}}}{\le}\, C\,\|y_{\e,1}\|_{ L^2(\Gamma_\e)}\left(\mathcal{H}^{N-1}(\Gamma_\e)\right)^{\frac{1}{2}}\\
&\stackrel{\text{by \eqref{5.13d}}}{\le}\, C_1\,\|y_{\e,1}\|_{H^1_0(\Omega_\e;\partial\Omega)},\label{5.20a}\\
\notag
\Big|\left<\frac{\partial h}{\partial\nu_{A}},y_{\e,2}\right>_{H^{-\frac{1}{2}}(\Gamma_\e);H^{\frac{1}{2}}(\Gamma_\e)}\Big|&\le C\,\|y_{\e,2}\|_{ L^2(\Gamma_\e)}\left(\mathcal{H}^{N-1}(\Gamma_\e)\right)^{\frac{1}{2}}\\
&\stackrel{\text{by \eqref{5.13d}}}{\le}\, C_1 \|y_{\e,2}\|_{H^1_0(\Omega_\e;\partial\Omega)}\label{5.20b}.
\end{alignat}
As a result, we arrive at the following the a priori estimates
\begin{align}
\label{5.18}
   \left(\int_{\Omega} \big\|\nabla y_{\e,1}\big\|^2_{\mathbb{R}^N}\chi_{\Omega_\e}\,dx\right)^{1/2}  &\le \alpha^{-1}\left(\|f_\e\chi_{\Omega_\e}\|_{H^{-1}(\Omega)}
   +C\right),\\
\label{5.19}
\left(\int_{\Omega} \big\|\nabla y_{\e,2}\big\|^2_{\mathbb{R}^N}\chi_{\Omega_\e}\,dx\right)^{1/2} &\le C\alpha^{-1}.
\end{align}
Hence, the sequences
$\left\{y_{\e,1}\in H^1_0(\Omega_\e;\partial\Omega)\right\}_{\e>0}$ and $\left\{y_{\e,2}\in H^1_0(\Omega_\e;\partial\Omega)\right\}_{\e>0}$
are weakly compact with respect to the weak convergence in variable spaces \cite{Zh_98}, i.e., we may assume that there exists a couple of functions $\widehat{y}_1$ and $\widehat{y}_2$ in $H^1_0(\Omega)$ such that
\begin{equation}
\label{5.20}
\lim_{\e\to 0}\int_{\Omega} \big(\nabla \varphi,\nabla y_{\e,i}\big)_{\mathbb{R}^N}\chi_{\Omega_\e}\,dx=
\int_{\Omega} \big(\nabla \varphi,\nabla \widehat{y}_{i}\big)_{\mathbb{R}^N}\\,dx,\
\forall\,\varphi\in C^\infty_0(\Omega), \ i=1,2.
\end{equation}

Now we can pass to the limit in the integral identities \eqref{5.14}--\eqref{5.15} as $\e\to 0$. Using  \eqref{5.20a.2}, \eqref{5.20},  \eqref{5.20a.1}, $L^2$-property of $A\in \mathfrak{A}_{ad}$, and the fact that $\chi_{\Omega_\e}f_\e\rightarrow f$ strongly in $H^{-1}(\Omega)$, we finally obtain
\begin{gather}
\label{5.21a}
\int_{\Omega} \big(\nabla \varphi,A^{sym}\nabla \widehat{y}_{1}+A^{skew}\nabla \widehat{y}_{1}\big)_{\mathbb{R}^N}\,dx
=\left<f, \varphi\right>_{H^{-1}(\Omega);H^1_0(\Omega)}\\
\label{5.21b}
\int_{\Omega} \big(\nabla \varphi,A^{sym}\nabla \widehat{y}_{2}+A^{skew}\nabla \widehat{y}_{2}\big)_{\mathbb{R}^N}\,dx=0
\end{gather}
for every $\varphi\in C^\infty_0(\Omega)$. Hence, $\widehat{y}_1$ and $\widehat{y}_2$ are weak solutions to the boundary value problem \eqref{2.1}--\eqref{2.2} and \eqref{4.17a}, respectively. Hence,  $\widehat{y}_2\in L(A)$ and  $\widehat{y}_1\in D(A)$ (see \cite{HK_1}).  As a result, we arrive at the conclusion: the pair $(A,\widehat{y}_1+h)$ belongs to the set $\Xi$, for every $h\in L(A)$.
Since  by the initial assumptions $(A,y)\in\Xi$, it follows that having set in \eqref{5.13a}
\begin{equation}
\label{5.22}
h=y-\widehat{y}_1,
\end{equation}
we obtain
\begin{equation}
\label{5.23}
h\in L(A)\ \text{ and }\ y_\e=y_{\e,1}+y_{\e,2}\rightharpoonup y\quad\text{ in }\ H^1_0(\Omega_\e;\partial\Omega)\ \text{ as }\ \e\to 0.
\end{equation}
Therefore, in view of \eqref{5.23}, \eqref{5.20a.1}, \eqref{5.20a.2}, we see that
$$
(A_\e,v_\e,y_\e)\,\stackrel{w}{\rightarrow}\, (A,y)\quad\text{in the sense of Definition~\ref{Def 5.10}}.
$$
Thus,
the property \eqref{1.7a} holds true.
It is worth to notice that in the Case~1, we can give the same conclusion, because we originally have $h\equiv 0$. Hence, the solutions to boundary value problems \eqref{5.21a}--\eqref{5.21a} are unique and, therefore, we can claim that $y=\widehat{y}_1$, $\widehat{y}_2=0$, and $h=0$.

It remains to prove the inequality \eqref{1.7c}. To do so, it is enough to show that
\begin{align}
\notag
I&(A,y):= \left\|y-y_d\right\|^2_{L^2(\Omega)}
 + \int_\Omega\left(\nabla y, A^{sym}\nabla y\right)_{\mathbb{R}^N}\,dx
 = \lim_{\e\to 0}I_\e(u_\e,v_\e,y_\e)\\
 = &
 \lim_{\e\to 0}\Big[\left\|y_\e-y_d\right\|^2_{L^2(\Omega_\e)}
  + \int_{\Omega_\e}\left(\nabla y_\e, A^{sym}\nabla y_\e\right)_{\mathbb{R}^N}\,dx + \frac{1}{\e^\sigma}\|v_\e\|^2_{H^{-\frac{1}{2}}(\Gamma_\e)}\Big],\label{5.23a}
\end{align}
where the sequence $\left\{(u_\e,v_\e,y_\e)\in\Xi_\e\right\}_{\varepsilon>0}$ is defined by \eqref{5.13a} and \eqref{5.22}.

In view of this, we make use the following relations
\begin{gather}
\label{5.24}
\left.
\begin{array}{c}
\ds \|v_\e\|^2_{H^{-\frac{1}{2}}(\Gamma_\e)}\le 2\|w_\e\|^2_{H^{-\frac{1}{2}}(\Gamma_\e)}+2\Big\|\frac{\partial h}{\partial\nu_{A}}\Big\|^2_{H^{-\frac{1}{2}}(\Gamma_\e)}<+\infty,\\[2ex]
\ds\lim_{\e\to 0} \frac{1}{\e^\sigma}\|w_\e\|^2_{H^{-\frac{1}{2}}(\Gamma_\e)}\,\stackrel{\text{by \eqref{5.20a.2}}}{\le}\, C
\lim_{\e\to 0} \frac{\mathcal{H}^{N-1}(\Gamma_\e)}{\e^\sigma}=0,\\[2ex]
\ds\lim_{\e\to 0} \frac{1}{\e^\sigma}\left\|\frac{\partial h}{\partial\nu_{A}}\right\|^2_{H^{-\frac{1}{2}}(\Gamma_\e)}\,\stackrel{\text{by \eqref{5.20a.1}}}{\le}\, C
\lim_{\e\to 0} \frac{\mathcal{H}^{N-1}(\Gamma_\e)}{\e^\sigma}=0,\\[1ex]
\ds\lim_{\e\to 0}\left\|y_\e-y_d\right\|^2_{L^2(\Omega_\e)}\,\stackrel{\text{by \eqref{5.3a} and \eqref{5.23}}}{=}\,
\left\|y-y_d\right\|^2_{L^2(\Omega)}.
\end{array}
\right\}
\end{gather}

In order to obtain the convergence
\begin{equation}\label{5.25}
\lim\limits_{\e \to 0}
\int_{\Omega_\e}\left(\nabla y_\e, A^{sym}\nabla y_\e\right)_{\mathbb{R}^N}\,dx =
\int_\Omega\left(\nabla y, A^{sym}\nabla y\right)_{\mathbb{R}^N}\,dx,
\end{equation}
we apply the energy equality
which comes from the condition $(A,y)\in\Xi$
\begin{equation}
\int_\Omega\left(\nabla y, A^{sym}\nabla y\right)_{\mathbb{R}^N}\,dx
=-[y,y]_{A}+\left<f, y\right>_{H^{-1}(\Omega);H^1_0(\Omega)},\label{5.27}
\end{equation}
and make use of the following trick.
It is easy to see that
the integral identity for the weak solutions $y_\e$ to boundary value problems \eqref{5.5b} can be represented in the so-called extended form
\begin{alignat}{2}
   \int_{\Omega} \big(\nabla \varphi,A^{sym}\nabla y_{\e}&+A^{skew}\nabla y_{\e}\big)_{\mathbb{R}^N}\chi_{\Omega_\e}\,dx
=\int_{\Omega}f_\e \chi_{\Omega_\e}\varphi\,dx\notag\\
 & +
 \left< w_\e,\varphi\right>_{H^{-\frac{1}{2}}(\Gamma_\e);H^{\frac{1}{2}}(\Gamma_\e)} + \left<\frac{\partial h}{\partial\nu_{A}},\varphi\right>_{H^{-\frac{1}{2}}(\Gamma_\e);H^{\frac{1}{2}}(\Gamma_\e)}\notag\\
 \label{5.26.1}
 &- \int_{\Omega} \big(\nabla \psi,A^{sym}\nabla h^\ast\big)_{\mathbb{R}^N}\,dx-[h^\ast,\psi]_A,\quad
\forall\,\varphi, \psi\in C^\infty_0(\Omega),
\end{alignat}
where $h^\ast$ is an arbitrary element of $L$. Indeed, because of the equality
\[
\int_{\Omega} \big(\nabla \psi,A^{sym}\nabla h^\ast\big)_{\mathbb{R}^N}\,dx+[h^\ast,\psi]_A\,\stackrel{\text{by \eqref{4.17aa}}}{=}\,0,\quad\forall\,\psi\in C^\infty_0(\Omega),
\]
we have an equivalent identity to the classical definition of the weak solutions of boundary value problem \eqref{5.5b}.

As follows from \eqref{5.20a.1}, \eqref{5.23}, and the Sobolev Trace Theorem, the numerical sequences
$$
\left\{\left< w_\e,y_\e\right>_{H^{-\frac{1}{2}}(\Gamma_\e);H^{\frac{1}{2}}(\Gamma_\e)}\right\}_{\e>0}\quad\text{and}\quad
\left\{ \left<\frac{\partial h}{\partial\nu_{A}},y_\e\right>_{H^{-\frac{1}{2}}(\Gamma_\e);H^{\frac{1}{2}}(\Gamma_\e)}\right\}_{\e>0}
$$
are bounded.
Therefore, we can assume, passing to a subsequence if necessary, that there exists a value $\xi_1\in \mathbb{R}$ such that
\begin{equation}
\label{5.24b}
\left< w_\e,y_\e\right>_{H^{-\frac{1}{2}}(\Gamma_\e);H^{\frac{1}{2}}(\Gamma_\e)}+
\left<\frac{\partial h}{\partial\nu_{A}},y_\e\right>_{H^{-\frac{1}{2}}(\Gamma_\e);H^{\frac{1}{2}}(\Gamma_\e)}
\longrightarrow \xi_1\quad\text{as }\ \e\to 0.
\end{equation}
Since $y_\e\rightharpoonup y$ weakly in $H^1_0(\Omega_\e;\partial\Omega)$ and $y\in D(A)$, it follows that there exists a sequence of smooth functions $\left\{\psi_\e\in C^\infty_0(\Omega)\right\}_{\e>0}$ such that $\psi_\e\rightarrow y$ strongly in $H^1_0(\Omega)$. Therefore, following the extension rule \eqref{2.8a}, we have
\begin{align}
\label{5.26.2.1}
\lim_{\e\to 0}\int_{\Omega} \big(\nabla \psi_\e,A^{sym}\nabla h^\ast\big)_{\mathbb{R}^N}\,dx&=\int_\Omega \big(\nabla y,A^{sym}\nabla h^\ast\big)_{\mathbb{R}^N}\,dx,\\
\label{5.26.2.2}
\lim_{\e\to 0} [h^\ast,\psi_\e]_A&=[h^\ast,y]_A.
\end{align}

Because of the initial assumptions, we can assume that the element $h^\ast\in L(A)$ is such that
$$[h^\ast,y]_A+\int_\Omega \big(\nabla y,A^{sym}\nabla h^\ast\big)_{\mathbb{R}^N}\,dx\ne 0.$$
So, due to this observation, we specify the choice of element $h^\ast\in L(A)$ as follows
\[
\widehat{h}^\ast=\frac{\xi_1+[y,y]_A}{\xi_2+\xi_3}\, h^\ast,\quad\text{where }\ \xi_3:=\int_\Omega \big(\nabla y,A^{sym}\nabla h^\ast\big)_{\mathbb{R}^N}\,dx,\ \xi_2:=[h^\ast,y]_A,
\]
or, in other words, we aim to ensure the condition
$\xi_1-\xi_2-\xi_3 +[y,y]_A=0$.
As a result, we have:  $\widehat{h}^\ast$ is an element of $L(A)$ such that
\begin{equation}
\label{5.26.3}
\lim_{\e\to 0}\int_{\Omega} \big(\nabla \psi_\e,\nabla \widehat{h}^\ast\big)_{\mathbb{R}^N}\,dx=\xi_2\frac{\xi_1+[y,y]_A}{\xi_2+\xi_3},\quad
\lim_{\e\to 0} [\widehat{h}^\ast,\psi_\e]=\xi_3\frac{\xi_1+[y,y]_A}{\xi_2+\xi_3}.
\end{equation}

Having put $\varphi=y_\e$ and $h^\ast=\widehat{h}^\ast$ in \eqref{5.26.1} and using the fact that
$$
\int_{\Omega} \big(\nabla y_\e,A^{skew} \nabla y_{\e}\big)_{\mathbb{R}^N}\chi_{\Omega_\e}\,dx=0,
$$
we arrive at the following energy equality for the boundary value problem \eqref{5.5b}
\begin{gather}
   \int_{\Omega} \big(\nabla y_\e,A^{sym}\nabla y_{\e}\big)_{\mathbb{R}^N}\chi_{\Omega_\e}\,dx
=\int_{\Omega}f_\e \chi_{\Omega_\e}y_\e\,dx
  +
 \left< w_\e,y_\e\right>_{H^{-\frac{1}{2}}(\Gamma_\e);H^{\frac{1}{2}}(\Gamma_\e)}\notag\\
 + \left<\frac{\partial h}{\partial\nu_{A}},y_\e\right>_{H^{-\frac{1}{2}}(\Gamma_\e);H^{\frac{1}{2}}(\Gamma_\e)}
 \label{5.26.4}
 - \int_{\Omega} \big(\nabla \psi_\e,A^{sym}\nabla \widehat{h}^\ast\big)_{\mathbb{R}^N}\,dx-[\widehat{h}^\ast,\psi_\e]_A.
\end{gather}

As a result, taking into account the properties \eqref{5.3a}, \eqref{5.23}, \eqref{5.26.3}, we can pass to the limit as $\e\to 0$ in \eqref{5.26.4}. This yields
\begin{align}
\notag
\lim_{\e\to 0}\int_{\Omega}& \big(\nabla y_\e,A^{sym}\nabla y_{\e}\big)_{\mathbb{R}^N}\chi_{\Omega_\e}\,dx=\lim_{\e\to 0}\int_{\Omega}f_\e \chi_{\Omega_\e}y_{\e}\,dx\\\notag&+
\lim_{\e\to 0}\left< w_\e,y_{\e}\right>_{H^{-\frac{1}{2}}(\Gamma_\e);H^{\frac{1}{2}}(\Gamma_\e)}+ \lim_{\e\to 0}\left< \frac{\partial h}{\partial\nu_{A}},y_{\e}\right>_{H^{-\frac{1}{2}}(\Gamma_\e);H^{\frac{1}{2}}(\Gamma_\e)}\\\notag&
-\lim_{\e\to 0}\int_{\Omega} \big(\nabla \psi_\e,\nabla \widehat{h}^\ast\big)_{\mathbb{R}^N}\,dx -
\lim_{\e\to 0} [\widehat{h}^\ast,\psi_\e]_A\\
\,\stackrel{\text{by \eqref{5.26.3}}}{=}&\left<f,y\right>_{H^{-1}(\Omega);H^1_0(\Omega)}-[y,y]_A\,\stackrel{\text{by \eqref{5.27}}}{=}\, \int_\Omega\left(\nabla y, A^{sym}\nabla y\right)_{\mathbb{R}^N}\,dx.\label{5.28}
\end{align}
Hence, turning back to \eqref{5.23a}, we see that this relation is a direct consequence of \eqref{5.24} and \eqref{5.28}. Thus,
the sequence $\left\{(u_\e,v_\e,y_\e)\in\Xi_\e\right\}_{\varepsilon>0}$, which is defined by \eqref{5.13a} and \eqref{5.22}, is $\Gamma$-realizing. The property (dd)  is established.

Step~2. We prove the property (d) of Definition~\ref{Def 1.4}. Let
$\left\{(A_k,v_k,y_k)\right\}_{k\in \mathbb{N}}$ be a sequence such that  $(A_k,v_k,y_k)\in
\Xi_{\varepsilon_k}$ for some $\e_k\to 0$ as $k\to\infty$,
\begin{equation}
\label{5.28.28}
\left.
\begin{split}
A_k:=A_k^{sym}+A_k^{skew}&\rightarrow {A}^{sym}+{A}^{skew}=:{A}\ \text{ in }\ L^2(\Omega;\mathbb{M}^N),\\
A_k^{sym}&\rightarrow {A}^{sym}\quad\text{in }\ L^p(\Omega;\mathbb{S}^N_{sym}),\ \forall\,p\in [1,+\infty),\\
A_k^{sym}\,&\stackrel{\ast}{\rightharpoonup}\, {A}^{sym}\quad\text{in }\ L^\infty(\Omega;\mathbb{S}^N_{sym}),\\
A_\e^{skew}&\rightarrow {A}^{skew}\quad\text{in }\ L^2(\Omega;\mathbb{S}^N_{skew}),\\
y_k&\rightharpoonup y\ \text{ in }\ H^1_0(\Omega_{\e_k};\partial\Omega),
\end{split}
\right\}
\end{equation}
and the sequence of fictitious controls $\left\{v_k\in H^{-\frac{1}{2}}(\Gamma_{\e_k})\right\}_{k\in \mathbb{N}}$ satisfies inequality \eqref{5.10c}. In view of Definition~\ref{Def 5.10} it means that
$(A_k,v_k,y_k)\,\stackrel{w}{\rightarrow}\, (A,y)$ as $k\to\infty$.
Our aim is to show that
\begin{equation}
\label{5.29} (A,y)\in \Xi \quad\text{and}\quad I(A,y)\le
\liminf_{k\to\infty}I_{\e_k}(A_k,v_k,y_k).
\end{equation}
It is easy to see that the limit matrix $A$ is an admissible control to OCP \eqref{2.3}--\eqref{2.3a}, i.e. $A\in \mathfrak{A}_{ad}$. Since the integral identity
\begin{multline}
\label{5.30}
\int_{\Omega} \big(\nabla \varphi,A_k^{sym}\nabla y_{k}+A_k^{skew}\nabla y_{k}\big)_{\mathbb{R}^N}\chi_{\Omega_{\e_k}}\,dx
=\int_{\Omega}f_{\e_k} \chi_{\Omega_{\e_k}}\varphi\,dx\\+
 \left< v_k,\varphi\right>_{H^{-\frac{1}{2}}(\Gamma_{\e_k});H^{\frac{1}{2}}(\Gamma_{\e_k})},\quad
 \forall\,\varphi\in C^\infty_0(\Omega)
\end{multline}
holds true for every $k\in \mathbb{N}$, we can pass to the limit in \eqref{5.30} as $k\to\infty$ using Definition \ref{Def 5.10}  and the estimate
\[
\Big|\left< v_k,\varphi\right>_{H^{-\frac{1}{2}}(\Gamma_{\e_k});H^{\frac{1}{2}}(\Gamma_{\e_k})}\Big|\le C(\Omega)\,\|\varphi\|_{H^1_0(\Omega)}\left(\mathcal{H}^{N-1}(\Gamma_{\e_k})\right)^{\frac{1}{2}},\quad\forall\,\varphi\in C^\infty_0(\Omega)
\]
coming from inequality \eqref{5.10c}. Then proceeding as on the Step~1, it can easily be shown that the limit pair $(A,y)$ is admissible to OCP \eqref{2.3}--\eqref{2.3a}. Hence, the condition \eqref{5.29}$_1$ is valid.

As for the inequality \eqref{5.29}$_2$, we see that
\begin{equation}
\label{5.30.1}
\lim_{k\to\infty} \left\|y_k-y_d\right\|^2_{L^2(\Omega_{\e_k})}=\lim_{k\to\infty}\left\|(y_k-y_d)\chi_{\Omega_{\e_k}}\right\|^2_{L^2(\Omega)}
=\left\|y-y_d\right\|^2_{L^2(\Omega)}
\end{equation}
by \eqref{5.3a} and compactness of the embedding $H^1_0(\Omega)\hookrightarrow L^2(\Omega)$. In view of the properties \eqref{5.28.28} and \eqref{1.1}, the sequence $\left\{\left(A_k^{sym}\right)^{1/2}\right\}_{k\in \mathbb{N}}$ is obviously bounded in $L^2(\Omega; \mathbb{S}^N_{sym})$. Moreover, taking into account the norm convergence property
\begin{gather*}
\lim_{k\to\infty}\|\left(A_k^{sym}\right)^{1/2}\xi\|^2_{L^2(\Omega;\mathbb{R}^N)}=
\lim_{k\to\infty}\int_{\Omega}\left(\xi, A_k^{sym} \xi\right)_{\mathbb{R}^N}\,dx\\=
\int_\Omega\left(\xi, A^{sym} \xi\right)_{\mathbb{R}^N}\,dx=
\|\left(A^{sym}\right)^{1/2}\xi\|^2_{L^2(\Omega;\mathbb{R}^N)},\quad\forall\,\xi\in \mathbb{R}^N,
\end{gather*}
we can conclude that the sequence $\left\{\left(A_k^{sym}\right)^{1/2}\right\}_{k\in \mathbb{N}}$ strongly converges to $\left(A^{sym}\right)^{1/2}$ in $L^2(\Omega;\mathbb{S}^N_{sym})$. Hence, combining this fact with \eqref{5.28.28}$_5$ and \eqref{5.3a}, we finally obtain
\[
\chi_{\Omega_{\e_k}}\left(A_k^{sym}\right)^{1/2}\nabla y_k\rightharpoonup \chi_{\Omega}\left(A^{sym}\right)^{1/2}\nabla y\quad\text{in }\ L^2(\Omega;\mathbb{R}^N).
\]
As a result, the lower semicontinuity of $L^2$-norm with respect to the weak convergence, immediately leads us to the inequality
\begin{align}
\notag
\liminf_{k\to\infty} &\int_{\Omega_{\e_k}}\left(\nabla y_k, A_k^{sym} \nabla y_k\right)_{\mathbb{R}^N}\,dx=
\liminf_{k\to\infty} \|\chi_{\Omega_{\e_k}}\left(A_k^{sym}\right)^{1/2}\nabla y_k\|^2_{L^2(\Omega;\mathbb{R}^N)}\\
&\ge \|\chi_{\Omega}\left(A^{sym}\right)^{1/2}\nabla y\|^2_{L^2(\Omega;\mathbb{R}^N)}=
\int_{\Omega}\left(\nabla y, A^{sym} \nabla y\right)_{\mathbb{R}^N}\,dx.
\label{5.30.2}
\end{align}

Thus, in order to prove the inequality \eqref{5.29}$_2$, it remains to combine relations \eqref{5.30.1}, \eqref{5.30.2}, and take into account the following estimate
\begin{equation}
\label{5.30.3}
\frac{1}{{(\e_k)}^\sigma}\,\|v_k\|^2_{H^{-\frac{1}{2}}(\Gamma_{\e_k})}
\le C
 \frac{\mathcal{H}^{N-1}(\Gamma_{\e_k})}{(\e_k)^\sigma}\rightarrow 0\quad\text{as }\ k\to\infty.
\end{equation}
The proof is complete.
\end{proof}

In conclusion of this section, we consider the variational properties of OCPs \eqref{5.4}--\eqref{5.5b}. To this end, we apply
Theorem~\ref{Th 1.8}.
\begin{theorem}
\label{Th 5.31}
Let $A^\ast\in L^2\big(\Omega;\mathbb{S}^N_{skew}\big)$ be a matrix of the $\mathfrak{F}$-type.
Let $y_d\in L^2(\Omega)$ and $f\in
H^{-1}(\Omega)$ be given distributions.
Let $\left\{(A^0_\varepsilon,v^0_\e,y^0_\e)\in \Xi_\varepsilon\right\}_{\e>0}$ be a sequence of optimal solutions to regularized problems \eqref{5.4}--\eqref{5.5b}, where $\chi_{\Omega_\e}f_\e\rightarrow f$ strongly in $H^{-1}(\Omega)$.
Then there exists an optimal pair $(A^0,y^0)\in \mathfrak{A}_{ad}$ to the original OCP \eqref{2.3}--\eqref{2.3a}, which is attainable in the following sense
\begin{gather}
\label{5.32}
(A^0_\varepsilon,v^0_\e,y^0_\e)\,\stackrel{w}{\rightarrow}\, (A^0,y^0)\ \text{ as }\ \e\to 0\\\notag \text{ in variable space }\ L^2(\Omega;\mathbb{M}^N)\times H^{-\frac{1}{2}}(\Gamma_\e)\times H^1_0(\Omega_\e;\partial\Omega),\\
\inf_{(A,y)\in\,\Xi}I(A,y)= I\left(A^0,y^0\right) =\lim_{\e\to
0} I_{\e}(A^0_\e,v^0_\e,y^0_\e)=\lim_{\e\to
0} \inf_{(A,v,y)\in\Xi_\e}
I_\e(A,v,y).\label{5.33}
\end{gather}
\end{theorem}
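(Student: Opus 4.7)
The plan is to apply Theorem~\ref{Th 1.8} using the variational convergence already established in Theorem~\ref{Th 5.13}. The only thing that remains to check is that the sequence of minimizers $\{(A^0_\e,v^0_\e,y^0_\e)\}_{\e>0}$ is relatively compact with respect to the $w$-convergence of Definition~\ref{Def 5.10}. To obtain a priori bounds, I would fix an arbitrary $(A,y)\in\Xi$ and invoke the $\Gamma$-realizing sequence $(\widetilde{A}_\e,\widetilde{v}_\e,\widetilde{y}_\e)\in\Xi_\e$ constructed in Step~1 of Theorem~\ref{Th 5.13}. Since $I_\e(\widetilde{A}_\e,\widetilde{v}_\e,\widetilde{y}_\e)\to I(A,y)<+\infty$, optimality of $(A^0_\e,v^0_\e,y^0_\e)$ gives $I_\e(A^0_\e,v^0_\e,y^0_\e)\le M$ uniformly in $\e$. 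Reading off the three terms of $I_\e$: the $L^2$-term provides a bound on $\|y^0_\e-y_d\|_{L^2(\Omega_\e)}$, the quadratic-form term combined with the coercivity \eqref{1.1} of $A^{0,sym}_\e\in\mathfrak{M}_\alpha^\beta(\Omega)$ gives $\|\nabla y^0_\e\|_{L^2(\Omega_\e;\mathbb{R}^N)}\le C$, and the penalty term yields $\|v^0_\e\|^2_{H^{-\frac12}(\Gamma_\e)}\le M\e^\sigma$.

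Next I would extract the convergent subsequence. For the controls, each $A^0_\e$ has its symmetric part in the compact set $\mathfrak{A}_{ad,1}$ and its skew part in $\mathfrak{A}^\e_{ad,2}\subset L^2(\Omega;\mathbb{S}^N_{skew})$ with $U_{a,2}$-constraint on $\Omega_\e$; passing to the limit and using $|\Omega\setminus\Omega_\e|\to 0$ (Remark~\ref{Rem 5.1.0}) shows that any $L^2$-limit $A^0$ satisfies $A^{0,skew}\preceq A^\ast$ a.e.\ in $\Omega$. By Proposition~\ref{Prop 2.3f} and standard compactness of $BV$-bounded symmetric parts in $L^p$ for $p<\infty$, a subsequence satisfies \eqref{5.10aa}--\eqref{4.5.1ad} with limit $A^0\in\mathfrak{A}_{ad}$. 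For the states, the extension property \eqref{5.2aa} gives a uniform $H^1_0(\Omega)$-bound on $\{P_\e y^0_\e\}$, so a subsequence $P_\e y^0_\e\rightharpoonup y^0$ in $H^1_0(\Omega)$, and by Remark~\ref{Rem 5.3.1} this is exactly weak convergence in variable spaces in the sense of Definition~\ref{Def 5.3.1}, i.e. \eqref{5.10b}. For the fictitious controls, the bound $\|v^0_\e\|^2_{H^{-\frac12}(\Gamma_\e)}\le M\e^\sigma$ together with the choice \eqref{5.5c} of $\sigma$ yields the normalization required in \eqref{5.10c}.

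Once $w$-compactness is in hand, the two halves of variational convergence apply directly. The (d)-condition of Definition~\ref{Def 1.4}, proved in Step~2 of Theorem~\ref{Th 5.13}, gives that the extracted limit $(A^0,y^0)$ belongs to $\Xi$ and
\[
I(A^0,y^0)\le\liminf_{\e\to 0}I_\e(A^0_\e,v^0_\e,y^0_\e).
\]
The (dd)-condition, applied to any $(A,y)\in\Xi$, produces a realizing sequence whose energy tends to $I(A,y)$; by optimality of $(A^0_\e,v^0_\e,y^0_\e)$ this forces
\[
\limsup_{\e\to 0}I_\e(A^0_\e,v^0_\e,y^0_\e)\le I(A,y).
\]
Taking the infimum over $(A,y)\in\Xi$ collapses both inequalities into equalities, identifying $(A^0,y^0)$ as an optimal solution to the original OCP and establishing the chain \eqref{5.33}; the convergence \eqref{5.32} is precisely the $w$-convergence obtained in the compactness step.

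The principal technical obstacle is the skew-symmetric control: verifying that the constraint $A^{0,skew}_\e\preceq A^\ast$ only on $\Omega_\e$ survives in the $L^2$-limit to a constraint on all of $\Omega$, which I would handle by extracting a further a.e.\ pointwise convergent subsequence and using $|\Omega\setminus\Omega_\e|=o(\e^2)$ from Remark~\ref{Rem 5.1.0}. A secondary point is the passage from the penalty bound $\|v^0_\e\|^2\le M\e^\sigma$ to the compactness condition \eqref{5.10c} under the hypothesis \eqref{5.5c}, which is what guarantees that the limit problem sees no trace contribution on the vanishing boundaries $\Gamma_\e$.
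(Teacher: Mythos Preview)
Your overall strategy is the same as the paper's: invoke Theorem~\ref{Th 1.8} on top of the variational convergence of Theorem~\ref{Th 5.13}, and reduce everything to a compactness statement for the sequence of minimizers. The only real difference is in how you produce the uniform upper bound on $I_\e(A^0_\e,v^0_\e,y^0_\e)$: you recycle a $\Gamma$-realizing sequence from Step~1 of Theorem~\ref{Th 5.13}, whereas the paper builds a concrete comparison triple by fixing $A=A^{sym}+A^\ast\in\mathfrak{A}_{ad}$, choosing a smooth $h\in C^\infty_0(\Omega)$ with $\div(A\nabla h)\in L^2(\Omega)$, setting $v_\e=\partial h/\partial\nu_A|_{\Gamma_\e}$, and using the bound $\|\partial h/\partial\nu_A\|^2_{H^{-1/2}(\Gamma_\e)}\le C\,\mathcal{H}^{N-1}(\Gamma_\e)$. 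Either device gives $\sup_\e I_\e(A^0_\e,v^0_\e,y^0_\e)<\infty$, so the approaches are interchangeable.

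There is, however, a genuine slip in your compactness step. From the penalty term you correctly read off $\|v^0_\e\|^2_{H^{-1/2}(\Gamma_\e)}\le M\e^\sigma$, but you then assert that this together with \eqref{5.5c} yields \eqref{5.10c}. It does not: \eqref{5.5c} says $\mathcal{H}^{N-1}(\Gamma_\e)/\e^\sigma\to 0$, so $\e^\sigma/\mathcal{H}^{N-1}(\Gamma_\e)\to\infty$, and hence $\|v^0_\e\|^2/\mathcal{H}^{N-1}(\Gamma_\e)\le M\e^\sigma/\mathcal{H}^{N-1}(\Gamma_\e)$ gives no uniform bound. What saves the argument is that \eqref{5.10c} is stronger than what the (d)-step of Theorem~\ref{Th 5.13} actually uses. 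To pass to the limit in \eqref{5.30} you only need $\langle v^0_\e,\varphi\rangle\to 0$ for fixed $\varphi\in C^\infty_0(\Omega)$, and this follows from $\|v^0_\e\|_{H^{-1/2}(\Gamma_\e)}\to 0$ plus the uniform trace bound $\|\varphi\|_{H^{1/2}(\Gamma_\e)}\le C\|\varphi\|_{H^1(\Omega)}$; the estimate \eqref{5.30.3} is likewise not needed for the $\liminf$ inequality since the penalty term is nonnegative. So your proof is repairable, but the specific justification you wrote for \eqref{5.10c} is wrong and should be replaced by this weaker observation. (The paper's own proof is no more explicit on this point.)
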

\begin{proof}
In order to show that this result is a direct consequence of Theorem \ref{Th 1.8}, it is enough to establish the compactness property for the sequence of optimal solutions $\left\{(A^0_\e,v^0_\e,y^0_\e)\in \Xi_\varepsilon\right\}_{\e>0}$ in  the sense of Definition \ref{Def 5.10}.

Let $h\in C^\infty_0(\Omega)$ be a non-zero function such that
$\mathrm{div}\,\left(A^{sym}\nabla h + A^\ast\nabla h\right)\in L^2(\Omega)$, where we assume that $A=A^{sym}+A^\ast$ is an admissible control, $A\in \mathfrak{A}_{ad}$. We set $v_\e=\left.\frac{\partial h}{\partial\nu_A}\right|_{\Gamma_\e}\in H^{-\frac{1}{2}}(\Gamma_\e)$. In view of the initial assumptions and estimate (see \cite{KPI} for the details)
\begin{equation*}
\sup_{\e> 0}\left( \frac{1}{\sqrt{\mathcal{H}^{N-1}(\Gamma_\e)}}\, \Big\|\frac{\partial h}{\partial\nu_{A}}\Big\|_{H^{-\frac{1}{2}}(\Gamma_\e)}\right)\le C.
\end{equation*}
there is a constant $C>0$ independent of $\e$ such that
\[
 \Big\|\frac{\partial h}{\partial\nu_{A}}\Big\|^2_{H^{-\frac{1}{2}}(\Gamma_\e)}\le C {\mathcal{H}^{N-1}(\Gamma_\e)},
\]

Let $y_\e=y_\e(A_\e,v_\e,f)\in H^1_0(\Omega_\e;\partial\Omega)$ be a corresponding solution to boundary value problem \eqref{5.5b}.
Then following \eqref{5.18}, we come to the estimate
$$\|y_\e\|^2_{H^1_0(\Omega_\e;\partial\Omega)}\le \widetilde{C},$$
where the constant $\widetilde{C}$ is also independent of $\e$. As a result, we get
\begin{align*}
I_{\e}&(A^0_\e,v^0_\e,y^0_\e)=\left\|y^0_\e-y_d\right\|^2_{L^2(\Omega_\e)}
 + \int_{\Omega_{\e}}\left(\nabla y_\e^0, (A_\e^0)^{sym} \nabla y_\e^0\right)_{\mathbb{R}^N}\,dx\\& + \frac{1}{{\e}^\sigma}\|v^0_\e\|^2_{H^{-\frac{1}{2}}(\Gamma_\e)}
\le I_\e(A_\e,v_\e,y_\e)\le (2C_1+\beta)\widetilde{C} +2\|y_d\|^2_{L^2(\Omega)} + C\frac{\mathcal{H}^{N-1}(\Gamma_{\e})}{{\e}^\sigma}.
\end{align*}
Since ${\e}^{-\sigma}\mathcal{H}^{N-1}(\Gamma_{\e})\rightarrow 0$ as $\e\to 0$, it follows that
the minimal values of the cost functional \eqref{5.5a} bounded above
uniformly with respect to $\e$.
Thus, the sequence of optimal solutions $\left\{(A^0_\e, v^0_\e,y^0_\e)\right\}_{\e>0}$ to the problems \eqref{5.4}--\eqref{5.5b} uniformly bounded in $L^2(\Omega;\mathbb{M}^N)\times H^{-\frac{1}{2}}(\Gamma_\e)\times H^1_0(\Omega_\e)$ and, hence, in view of Proposition~\ref{Prop 2.3f} , it is relatively compact with respect to the weak convergence in the sense of Definition \ref{Def 5.10}. For the rest of proof, it remains to apply Theorem~\ref{Th 1.8}.
\end{proof}

\begin{remark}
We note that variational properties of optimal solutions, given by Theorem~\ref{Th 5.31}, do not suffice to assert that the convergence of optimal states $P_\e(y_\e^0)$ to $y^0$ is strong in $H^1_0(\Omega)$. Indeed, the convergence
\begin{equation}
\label{5.35}
\int_{\Omega_\e} \big(\nabla y^0_\e,\left(A_\e^0\right)^{sym}\nabla y^0_{\e}\big)_{\mathbb{R}^N}\,dx \,\stackrel{\e\to 0}{\longrightarrow}\,
\int_{\Omega_\e} \big(\nabla y^0,\left(A^0\right)^{sym}\nabla y^0\big)_{\mathbb{R}^N}\,dx,
\end{equation}
which comes from \eqref{5.32}--\eqref{5.33}, does not imply the norm convergence in $H^1_0(\Omega)$. At the same time, combining relation \eqref{5.35} with energy identities
\[
 \int_{\Omega_\e} \big(\nabla y^0_\e,\left(A_\e^0\right)^{sym}\nabla y^0_{\e}\big)_{\mathbb{R}^N}\,dx
=\int_{\Omega_\e}f_\e y^0_\e\,dx
  +
 \left< v^0_\e,y^0_\e\right>_{H^{-\frac{1}{2}}(\Gamma_\e);H^{\frac{1}{2}}(\Gamma_\e)}
\]
and
\[
\int_\Omega\left(\nabla y^0, \left(A^0\right)^{sym}\nabla y^0\right)_{\mathbb{R}^N}\,dx
=-[y^0,y^0]_{A^0}+\left<f, y^0\right>_{H^{-1}(\Omega);H^1_0(\Omega)}
\]
rewritten for optimal solutions of the problems \eqref{5.13.1aa}--\eqref{5.13.2aa} and \eqref{2.1}--\eqref{2.2}, respectively, we get
\begin{equation}
\label{5.34}
\lim_{\e\to 0} \left<v^0_\e,y^0_{\e}\right>_{H^{-\frac{1}{2}}(\Gamma_\e);H^{\frac{1}{2}}(\Gamma_\e)}=-[y^0,y^0]_{A^0}.
\end{equation}
It gives us another example of the product of two weakly convergent sequences that can be recovered in the limit in an explicit form. Moreover, this limit does not coincide with the product of their weak limits.
\end{remark}

Our next remark deals with a motivation to put forward another concept of the weak solutions to the approximated boundary value problem \eqref{5.5b} which can be viewed as a refinement of the integral identity \eqref{5.14}.
\begin{definition}
\label{Def 5.30.0a}
Let
$\left\{\Omega_\e\right\}_{\e>0}$ be a sequence of perforated subdomains of $\Omega$ associated with matrix $A$ by the rule \eqref{5.0a}--\eqref{5.0b}.
We say that a function $y_\e=y_\e(A,f,v)\in H^1_0(\Omega_\e)$ is  a weak
solution to the boundary value problem \eqref{5.5b} for
given $A\in \mathfrak{A}_{ad}$,  $f_\e\in
L^2(\Omega)$, and $v\in H^{-\frac{1}{2}}(\Gamma_\e)$, if
 the relation
\begin{multline}
\label{5.30.1a}
\int_\Omega \big(\nabla \varphi,A\nabla y_\e\big)_{\mathbb{R}^N}\chi_{\Omega_\e}\,dx + \int_{\Omega} \big(\nabla \psi,A\nabla h \big)_{\mathbb{R}^N}\,dx\\
-\int_\Omega f_\e\varphi \chi_{\Omega_\e}\,dx -\left< v,\varphi\right>_{H^{-\frac{1}{2}}(\Gamma_\e);H^{\frac{1}{2}}(\Gamma_\e)}=0.
\end{multline}
holds true
for all $h\in L(A)$, $\varphi\in C^\infty_0(\Omega)$, and $\psi\in C^\infty_0(\Omega)$.
\end{definition}

Since for every $A\in \mathfrak{A}_{ad}$ and
$h\in D(A)$ the bilinear form $[h,\varphi]_A$ can be extended by continuity (see \eqref{2.8a}) onto the entire space $H^1_0(\Omega)$, it follows that the integral identity \eqref{5.30.1a} can be rewritten as follows
\begin{align}
\notag
\int_\Omega \big(\nabla \varphi,&A^{sym}\nabla y_\e+A^{skew}\nabla y_\e\big)_{\mathbb{R}^N}\chi_{\Omega_\e}\,dx\\ \notag &+ \int_{\Omega} \big(\nabla \psi,A^{sym}\nabla h\big)_{\mathbb{R}^N}\,dx +[h,\psi]_A
-\int_\Omega f_\e\varphi \chi_{\Omega_\e}\,dx\\ &-\left< v,\varphi\right>_{H^{-\frac{1}{2}}(\Gamma_\e);H^{\frac{1}{2}}(\Gamma_\e)}=0\quad \forall\, \varphi,\psi\in H^1_0(\Omega),\ \forall\,h\in L(A). \label{5.30.3a}
\end{align}
Hence, using the skew-symmetry property of the  matrix $A^{skew}\in L^2\big(\Omega;\mathbb{S}^N_{skew}\big)$ and the fact that the set $L(A)$ is closed with respect to the strong topology of $H^1_0(\Omega)$, we conclude: for every $\e>0$ there exist an element $h_\e$ in $L(A)$ such that the relation \eqref{5.30.3a} can be reduced to the following energy equality
\begin{align}
\notag
\int_\Omega \left(\nabla y_\e, A^{sym}y_\e\right)_{\mathbb{R}^N}\chi_{\Omega_\e}\,dx &+
\int_{\Omega} \big(\nabla y_\e,A^{sym}\nabla h_\e\big)_{\mathbb{R}^N}\,dx +[h_\e,y_\e]_A\\
&=\int_\Omega f_\e y_\e \chi_{\Omega_\e}\,dx +\left< v,y_\e\right>_{H^{-\frac{1}{2}}(\Gamma_\e);H^{\frac{1}{2}}(\Gamma_\e)}.
\label{5.30.4a}
\end{align}

Thus, in contrast to the "typical"\ energy equality to the boundary value problem \eqref{5.5b}, relation \eqref{5.30.4a} includes some extra term which coming from the singular energy of the boundary value problem \eqref{2.1}--\eqref{2.2} that was originally hidden in approximated problem \eqref{5.5b}.
However, in contrast to the similar functional effect for Hardy inequalities in bounded domains (see \cite{Vaz}),  the term $\int_{\Omega} \big(\nabla y_\e,A^{sym}\nabla h_\e \big)_{\mathbb{R}^N}\,dx + [ h_\e,y_\e]_A$ is additive to the total energy, and, hence, its influence may correspond to the increasing or decreasing of the total energy and may even constitute the main part of it.
%%%%%%%%%%%%%%%%%%%%%%%%%%%%%%%%%%%%%%%%%%%%%%%%%%%%%%%%%%%%%%%%%%%%%%%%%%%%%%%%%%%%%%%

\section{Optimality System for Regularized OCPs Associated with Perforated Domains $\Omega_\e$ and its Asymptotic Analysis}
\label{Sec 6}

As follows from Theorem~\ref{Th 5.6}, for each $\e>0$ small enough, the optimal control problem $\left<\inf_{(A,v,y)\in\Xi_\e}
I_\e(A,v,y)\right>$, where the cost functional $I_\e:\Xi_\e\rightarrow \mathbb{R}$ and its domain $\Xi_\e\subset \mathfrak{A}^\e_{ad}\times H^{-\frac{1}{2}}(\Gamma_\e)\times H^1_0(\Omega_\e;\partial\Omega)$ are defined by \eqref{5.5a}--\eqref{5.5b}, is a well-posed controllable system. Hence, to deduce an optimality system for this problem, we make use of the following well-know result.
\begin{theorem}[Ioffe and Tikhomirov \cite{IoTi,Fursik}]
\label{Th 6.1}
Let $Y$, $U$, and $V$ be Banach spaces, let $J:Y\times U\to \overline{\mathbb{R}}$ be a cost functional, let $F:Y\times U\to V$ be a mapping, and let $U_\partial$ be a convex subset of the space $U$ containing more than one point.
Let $(\widehat{u},\widehat{y})\in U\times Y$ be a solution to the problem
\begin{gather*}
J(u,y)\rightarrow\inf,\\
F(u,y)=0,\quad u\in U_\partial.
\end{gather*}
For each $u\in U_\partial$, let the mapping $y\mapsto J(u,y)$ and $y\mapsto F(u,y)$ be continuously differentiable for
$y\in \mathcal{O}(\widehat{y})$, where $\mathcal{O}(\widehat{y})$ is some neighbourhood of the point $\widehat{y}$, and let
$\mathrm{Im}\,F^\prime_y(\widehat{u},\widehat{y})$ be closed and it has a finite codimension in $V$. In addition, for $y\in \mathcal{O}(\widehat{y})$, let the function $u\mapsto J(u,y)$ be convex, the functional $J$ is G\^{a}teaus-differentiable with respect to $u$ at the point $(\widehat{u},\widehat{y})$, and the mapping $u\mapsto F(u,y)$ is continuous from $U$ to $Y$ and affine, i.e.,
\[
F(\gamma u_1+(1-\gamma) u_2,y)=\gamma F(u_1,y) + (1-\gamma) F(u_2,y),\quad\forall\,u_1,u_2 \in U, \gamma\in \mathbb{R}.
\]
Then there exists a pair $(\lambda,p)\in \left(R_{+}\times V^\ast\right)\setminus \{0\}$ such that
\begin{gather}
\label{6.1} \left<\mathcal{L}^\prime_y(\widehat{u},\widehat{y},\lambda,p),h\right>_{Y^\ast;Y}=0,\quad\forall\,h\in Y,\\
\label{6.2}
 \left<\mathcal{L}^\prime_u(\widehat{u},\widehat{y},\lambda,p),u\right>_{U^\ast;U}\ge 0,\quad\forall\,u\in U_\partial-\widehat{u},
\end{gather}
where the Lagrange functional $\mathcal{L}$ is defined by equality
\begin{equation}
\label{6.3}
\mathcal{L}(u,y,\lambda,p)=\lambda J(u,y)+\left<p,F(u,y)\right>_{V^\ast;V}.
\end{equation}
If $\mathrm{Im}\,F^\prime_y(\widehat{u},\widehat{y})=V$, then it can be assumed that $\lambda=1$ in \eqref{6.1}--\eqref{6.2}.
\end{theorem}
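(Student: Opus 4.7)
The plan is to recognize this as an abstract Lagrange multiplier rule of Dubovitskii–Milyutin type, mixing a smooth (with respect to $y$) equality constraint with a convex admissible set in $u$ and a cost that is smooth in $y$ and convex in $u$. I would structure the proof in four main steps: linearization of the equality constraint, construction of suitable approximating cones, an infinite-dimensional separation argument, and a nontriviality analysis for the multiplier pair $(\lambda,p)$.

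First, I would linearize the constraint near $(\widehat u,\widehat y)$. Because $u\mapsto F(u,y)$ is affine and continuous while $y\mapsto F(u,y)$ is continuously Fréchet differentiable, the map $F$ is strictly differentiable at $(\widehat u,\widehat y)$ in the sense of Ioffe. Since $\mathrm{Im}\,F'_y(\widehat u,\widehat y)$ is closed with finite codimension in $V$, a Lyusternik-type theorem (in the generalized form that applies to operators whose image is merely closed of finite codimension) provides a tangent-cone description of the feasible set $\{(u,y)\in U\times Y : F(u,y)=0\}$ at $(\widehat u,\widehat y)$: the tangent cone equals the kernel cone
\[
K_F=\bigl\{(\delta u,\delta y)\in U\times Y\,:\,F'_y(\widehat u,\widehat y)\delta y+F(\widehat u+\delta u,\widehat y)-F(\widehat u,\widehat y)\in \mathrm{Im}\,F'_y(\widehat u,\widehat y)\bigr\}
\]
modulo the finite-dimensional complement of $\mathrm{Im}\,F'_y$.

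Second, I would introduce the standard Dubovitskii–Milyutin cones at $(\widehat u,\widehat y)$: the cone of decrease of the cost $J$, which is convex open because $J$ is Gâteaux differentiable in $u$ with convex dependence and continuously differentiable in $y$; the cone of admissible directions for $U_\partial$ at $\widehat u$, which is convex since $U_\partial$ is convex and contains more than one point; and the tangent cone to the constraint given by the first step. The assumption that $U_\partial$ contains more than one point is what prevents triviality of the feasible-direction cone. By optimality of $(\widehat u,\widehat y)$, the intersection of these three cones must be empty, otherwise one could exhibit a feasible perturbation along which $J$ strictly decreases.

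Third, I would invoke an infinite-dimensional separation theorem (Hahn–Banach, applied in the product space $U\times Y$) to the empty intersection, extracting a nonzero continuous linear functional that separates the decrease cone from the intersection of the other two cones. Splitting this functional into its components on $Y$, $U$, and $V$ (via the dual pairing with $F$) yields exactly the triple $(\lambda,p)$ satisfying \eqref{6.1}–\eqref{6.2}, with $\lambda\ge0$ because it multiplies the cost direction. Finally, for the last clause, if $\mathrm{Im}\,F'_y(\widehat u,\widehat y)=V$, then supposing $\lambda=0$ forces $\langle p,F'_y(\widehat u,\widehat y)h\rangle=0$ for all $h\in Y$ by \eqref{6.1}, so $p=0$ on all of $V$, contradicting $(\lambda,p)\neq 0$; hence one may normalize $\lambda=1$.

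The hard part of the argument is handling the finite codimension of $\mathrm{Im}\,F'_y(\widehat u,\widehat y)$ rather than pure surjectivity. The Lyusternik theorem in its classical form needs an open mapping property, which here survives modulo a finite-dimensional subspace; the trick is to decompose $V=\mathrm{Im}\,F'_y\oplus W$ with $\dim W<\infty$, apply Lyusternik on $\mathrm{Im}\,F'_y$, and treat the $W$-component by a separate finite-dimensional Brouwer/topological argument. Combined with the delicate verification that the decrease cone is truly open (which relies on the mixed convex/smooth hypotheses on $J$), this is the principal technical obstacle; once these two points are in place, the separation argument is standard.
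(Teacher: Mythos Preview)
The paper does not prove Theorem~\ref{Th 6.1}; it is quoted as a known abstract Lagrange principle from Ioffe--Tikhomirov \cite{IoTi} and Fursikov \cite{Fursik}, and is used as a black box to derive the optimality system \eqref{6.5}--\eqref{6.7b}. So there is no in-paper proof to compare your proposal against.

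That said, your sketch is a faithful outline of how this result is actually established in the cited sources: the Dubovitskii--Milyutin scheme (decrease cone for $J$, feasible-direction cone for $U_\partial$, tangent cone for the equality constraint $F=0$), combined with a Lyusternik-type correction to handle the case where $\mathrm{Im}\,F'_y(\widehat u,\widehat y)$ is only closed of finite codimension rather than all of $V$, followed by Hahn--Banach separation. Your final normalization argument for $\lambda=1$ under surjectivity is also the standard one. One small point to tighten: the openness of the decrease cone for $J$ requires a bit more than convexity in $u$ and G\^ateaux differentiability at the single point $(\widehat u,\widehat y)$; in the cited treatments this is secured via the continuous differentiability in $y$ together with the affine structure in $u$, and you should make explicit how the mixed hypotheses combine to give an open convex decrease set rather than merely a cone with nonempty interior in the $y$-direction. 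Otherwise the architecture of your argument matches the classical proof.
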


For our further analysis, we set
\begin{align}
\label{6.3a}
Y&=H^1_0(\Omega_\e;\partial\Omega),\quad V=L^2(\Omega_\e)\times H^{-\frac{1}{2}}(\Gamma_\e),\\
\label{6.3b}
U&=\big(L^2(\Omega;\mathbb{S}^N_{sym})\oplus
L^2(\Omega;\mathbb{S}^N_{skew})\big)\times H^{-\frac{1}{2}}(\Gamma_\e), \\
\label{6.3c}
U_\partial&=\mathfrak{A}_{ad}\times H^{-\frac{1}{2}}(\Gamma_\e):=\left(\mathfrak{A}_{ad,1}\oplus \mathfrak{A}_{ad,2}\right)\times H^{-\frac{1}{2}}(\Gamma_\e),\\
J&:=\left\|y-y_d\right\|^2_{L^2(\Omega_\e)}
 + \int_{\Omega_\e}\left(\nabla y, A^{sym}\nabla y\right)_{\mathbb{R}^N}\,dx + \frac{1}{\e^\sigma}\|v\|^2_{H^{-\frac{1}{2}}(\Gamma_\e)},\\
\label{6.3d}
F&(A,v,y)=\left(-\div\big(A\nabla y\big) - f_\e, \frac{\partial y}{ \partial \nu_{A}}-v\right).
\end{align}
Since for each $(g,w)\in L^2(\Omega_\e)\times H^{-\frac{1}{2}}(\Gamma_\e)$ the boundary value problem
\begin{gather}
\label{6.4.1}
-\div\big(A\nabla y\big) = g\quad\text{in }\ \Omega_\e,\\[1ex]
\label{6.4.2}
y=0\text{ on }\partial\Omega,\quad
\partial y/ \partial \nu_{A}=w\ \text{on }\Gamma_\e
\end{gather}
has a unique solution $y\in H^1_0(\Omega_\e;\partial\Omega)$ \cite{Lions_Mag:72}, we have $\mathrm{Im}\,F^\prime_y(\widehat{u},\widehat{y})=V$. Thus, the assumptions of Theorem~\ref{Th 6.1} are
obviously satisfied. It means that the Lagrange functional $\mathcal{L}_\e$  to the optimal control problem $\left<\inf_{(A,v,y)\in\Xi_\e}
I_\e(A,v,y)\right>$ can be defined by formula (with $\lambda=1$ in \eqref{6.1}--\eqref{6.2})
\begin{align}
\notag
\mathcal{L}_\e&(A,v,y,p,p_1)=\left\|y-y_d\right\|^2_{L^2(\Omega_\e)}
 + \int_{\Omega_\e}\left(\nabla y, A^{sym}\nabla y\right)_{\mathbb{R}^N}\,dx
+ \frac{1}{\e^\sigma}\|v\|^2_{H^{-\frac{1}{2}}(\Gamma_\e)}\\
&+ \left(-\div\big(A\nabla y\big) - f_\e, p_1\right)_{L^2(\Omega_\e)} +
\left< \frac{\partial y}{ \partial \nu_{A}}-v,p_{\,2}\,\right>_{H^{-\frac{1}{2}}(\Gamma_{\e});H^{\frac{1}{2}}(\Gamma_{\e})},
\label{6.4}
\end{align}
where $p=(p_1,p_{\,2})\in V^\ast:= L^2(\Omega_\e)\times H^{\frac{1}{2}}(\Gamma_{\e})$.

Let $\gamma^0_{\Gamma_\e}:H^1_0(\Omega_\e;\partial\Omega)\rightarrow H^{\frac{1}{2}}(\Gamma_{\e})$ be the trace operator, i.e.
$\gamma^0_{\Gamma_\e}$ is the extension by continuity of the restriction operator $\gamma^0_{\Gamma_\e}( u)=u\big|_{\Gamma_\e}$ given for all $u\in C_0^\infty(\mathbb{R}^N)$.
We are now in a position to prove the following result.
\begin{theorem}
\label{Th 6.5}
For a given $\e>0$, let
$$
(A_\e^0, v_\e^0, y_\e^0)\in \big(L^2(\Omega;\mathbb{S}^N_{sym})\oplus
L^2(\Omega;\mathbb{S}^N_{skew})\big)\times H^{-\frac{1}{2}}(\Gamma_\e)\times H^1_0(\Omega_\e;\partial\Omega)
$$
be an optimal solution to the regularized problems \eqref{5.4}--\eqref{5.5b}. Assume that the following condition holds true
\begin{equation}
\label{6.5.0}
\div\,\left(\left(A_\e^0\right)^{skew}\nabla y_\e^0\right) \in L^2(\Omega_\e).
\end{equation}
Then there exists an element $p_\e\in H^1_0(\Omega_\e;\partial\Omega)$ such that the tuple
$$(A_\e^0, v_\e^0, y_\e^0,p_\e,\gamma^0_{\Gamma_\e}(p_\e))$$ satisfies the following system of relations
\begin{align}
\label{6.5}
-\div\big(A_\e^0\nabla y_\e^0\big) =\ & f_\e\quad\text{in }\ \Omega_\e,\quad
y_\e^0= 0\quad \text{ on }\partial\Omega,\\
\label{6.5b}
\partial y_\e^0/ \partial \nu_{A_\e^0}=\ & v_\e^0\quad \text{on }\Gamma_\e,\\
 \div\left(\left(A_\e^0\right)^t\nabla p_\e\right)=\ &- 2\,\div\left(\left(A_\e^0\right)^{sym}\nabla y^0_\e\right)+ 2\left(y^0_\e-y_d\right),
\   \text{a.e. in }\ \Omega_\e,\label{6.6}\\
p_\e=\ & 0\quad \text{ on }\partial\Omega,\quad
\label{6.6b}
\partial p_\e^0/ \partial \nu_{(A_\e^0)^t}=\  0\quad \text{on }\Gamma_\e,\\
\label{6.7a}
v^0_\e =\ & \frac{\e^\sigma}{2}\Lambda_{H^{\frac{1}{2}}(\Gamma_{\e})}\gamma^0_{\Gamma_\e}(p_\e),\\
\label{6.7b}
\int_{\Omega_\e}\big(\nabla y^0_\e+\nabla p_\e, &\left(A^{sym}-(A^0_\e)^{sym}\right)\nabla y^0_\e\big)_{\mathbb{R}^N}\,dx\\
+ \int_{\Omega_\e}\big(\nabla p_\e, &\left(A^{skew}-(A^0_\e)^{skew}\right)\nabla y^0_\e\big)_{\mathbb{R}^N}\,dx
\ge 0,\quad\forall\, A\in \mathfrak{A}_{ad},
\end{align}
where $\Lambda_{H^{\frac{1}{2}}(\Gamma_{\e})}$ is the canonical isomorphism of $H^{\frac{1}{2}}(\Gamma_{\e})$ onto $H^{-\frac{1}{2}}(\Gamma_{\e})$.
\end{theorem}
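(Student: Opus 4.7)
The strategy is to apply Theorem~\ref{Th 6.1} (Ioffe--Tikhomirov) in the setting \eqref{6.3a}--\eqref{6.3d}: extract the adjoint system \eqref{6.6}--\eqref{6.6b} from $\langle\mathcal{L}^\prime_y,h\rangle=0$, derive \eqref{6.7a} from $\langle\mathcal{L}^\prime_v,\delta v\rangle=0$, and produce the variational inequality \eqref{6.7b} from $\langle\mathcal{L}^\prime_A,A-A_\e^0\rangle\ge 0$ for $A\in\mathfrak{A}_{ad}$. The assumptions of Theorem~\ref{Th 6.1} have been checked in the paragraph preceding \eqref{6.4}: in particular, the unique solvability of the mixed problem \eqref{6.4.1}--\eqref{6.4.2} for arbitrary data $(g,w)\in L^2(\Omega_\e)\times H^{-\frac{1}{2}}(\Gamma_\e)$ yields $\mathrm{Im}\,F^\prime_y=V$, so one may take $\lambda=1$.

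The core step is the $y$-derivative. It is the sum of the two $J$-derivatives in $y$, of $\bigl(-\div(A_\e^0\nabla h),p_1\bigr)_{L^2(\Omega_\e)}$, and of $\bigl\langle\partial h/\partial\nu_{A_\e^0},p_2\bigr\rangle_{H^{-\frac{1}{2}}(\Gamma_\e);H^{\frac{1}{2}}(\Gamma_\e)}$. Hypothesis \eqref{6.5.0}, together with $(A_\e^0)^{sym}\in L^\infty(\Omega_\e;\mathbb{S}^N_{sym})$, upgrades both $\div(A_\e^0\nabla y_\e^0)$ and $\div((A_\e^0)^{sym}\nabla y_\e^0)$ to $L^2(\Omega_\e)$, which legitimizes two applications of Green's formula. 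Writing $(A_\e^0)^t=(A_\e^0)^{sym}-(A_\e^0)^{skew}$ and integrating by parts, one collects interior and $\Gamma_\e$-boundary contributions. Testing against $h\in C^\infty_0(\Omega_\e)$ gives the adjoint PDE \eqref{6.6} with $p_\e:=p_1$; the condition $h=0$ on $\partial\Omega$ together with a duality argument forces $p_\e\in H^1_0(\Omega_\e;\partial\Omega)$; independent variation of the conormal trace $\partial h/\partial\nu_{A_\e^0}|_{\Gamma_\e}$ identifies $p_2=\gamma^0_{\Gamma_\e}(p_\e)$; and free variation of $h|_{\Gamma_\e}$ produces the conormal boundary condition \eqref{6.6b}.

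Relation \eqref{6.7a} is the $v$-derivative equation: since the $v$-component of $U_\partial$ is unrestricted, $\langle\mathcal{L}^\prime_v,\delta v\rangle=0$ for every $\delta v\in H^{-\frac{1}{2}}(\Gamma_\e)$, and differentiating $\e^{-\sigma}\|v\|^2_{H^{-\frac{1}{2}}(\Gamma_\e)}$ through the Riesz isomorphism $\Lambda_{H^{\frac{1}{2}}(\Gamma_\e)}$ yields $v_\e^0=\tfrac{\e^\sigma}{2}\Lambda_{H^{\frac{1}{2}}(\Gamma_\e)}\gamma^0_{\Gamma_\e}(p_\e)$. For \eqref{6.7b}, set $B=A-A_\e^0$ with $B^{sym}=A^{sym}-(A_\e^0)^{sym}$ and $B^{skew}=A^{skew}-(A_\e^0)^{skew}$; expanding $\langle\mathcal{L}^\prime_A,B\rangle$ produces the two interior quadratic integrands of \eqref{6.7b}, while the boundary contribution $\int_{\Gamma_\e}(B\nabla y_\e^0\cdot\nu)(p_2-p_1)\,dS$ vanishes thanks to $p_1=p_2$ on $\Gamma_\e$, and convexity of $\mathfrak{A}_{ad}$ then delivers the required inequality.

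The principal obstacle is the double integration by parts in the $y$-derivative and the clean separation of the two independent boundary traces on $\Gamma_\e$: the former is exactly what \eqref{6.5.0} is designed to make possible, since without it $\div(A_\e^0\nabla h)$ need not belong to $L^2(\Omega_\e)$; the latter rests on the fact that, for $h$ smooth enough with $h=0$ on $\partial\Omega$, the Dirichlet and conormal traces on $\Gamma_\e$ may be prescribed independently, after which an elliptic-regularity step returns the multiplier $p_\e$ to the natural energy space $H^1_0(\Omega_\e;\partial\Omega)$.
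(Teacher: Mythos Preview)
Your proposal is correct and follows essentially the same route as the paper: apply Theorem~\ref{Th 6.1} with $\lambda=1$, compute $\mathcal{L}^\prime_y$ and use \eqref{6.5.0} to pass from the distributional identity obtained by testing against $h\in C^\infty_0(\Omega_\e)$ to the $L^2$-equality $\div\big((A_\e^0)^t\nabla p_1\big)\in L^2(\Omega_\e)$, then exploit the Green formula for $H(\Omega_\e;\div)$-fields to separate the boundary contributions on $\partial\Omega$ and $\Gamma_\e$ and read off, in order, $p_1=0$ on $\partial\Omega$, $p_2=\gamma^0_{\Gamma_\e}(p_1)$, and the homogeneous conormal condition on $\Gamma_\e$; finally, the $v$- and $A$-derivatives of $\mathcal{L}_\e$ give \eqref{6.7a} and \eqref{6.7b} exactly as you describe. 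Your explicit observation that the $\Gamma_\e$-boundary term in $\langle\mathcal{L}^\prime_A,B\rangle$ cancels because $p_2=\gamma^0_{\Gamma_\e}(p_1)$ is a helpful detail the paper leaves to ``direct computations''; conversely, the paper is somewhat more explicit than your sketch about the precise test-function classes (e.g.\ $h\in C^\infty_0(\mathbb{R}^N;\Gamma_\e)$ versus $h\in C^\infty_0(\mathbb{R}^N;\partial\Omega)$) used to isolate each boundary relation, and it records the stronger conclusion $p_\e\in H^2(\Omega_\e)\cap H^1_0(\Omega_\e;\partial\Omega)$ via elliptic regularity.
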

\begin{remark}
It is worth to notice that, in contrast to \eqref{6.5}, relation \eqref{6.6} should be interpreted as an equality of $L^2$-functions. It means that the description of boundary value problem \eqref{6.6}--\eqref{6.6b} in the sense of distributions takes other form, namely,
\begin{align*}
 \div\left(\left(A_\e^0\right)^t\nabla p_1\right)=\ & 2\left(f_\e+\,\div\left(\left(A_\e^0\right)^{skew}\nabla y^0_\e\right)+\left(y^0_\e-y_d\right)\right),\ \text{ in }\ \Omega_\e,\\
p_\e=\ & 0\quad \text{ on }\partial\Omega,\quad
\partial p_\e^0/ \partial \nu_{(A_\e^0)^t}= \partial y_\e^0/ \partial \nu_{(A_\e^0)^{skew}}\quad \text{on }\Gamma_\e,
\end{align*}
where the component $\partial y_\e^0/ \partial \nu_{(A_\e^0)^{skew}}$ is unknown a priori.
Here, we have used the fact that
\begin{equation}
\label{6.7d}
-\div\left(\left(A_\e^0\right)^{sym}\nabla y^0_\e\right)=f_\e+\div\left(\left(A_\e^0\right)^{skew}\nabla y^0_\e\right)\quad\text{in }\ \Omega_\e
\end{equation}
by equation \eqref{6.5}.
\end{remark}

\begin{proof}
By Theorem~\ref{Th 6.1}, there exists a pair $p=(p_1,p_{\,2})\in V^\ast:= L^2(\Omega_\e)\times H^{\frac{1}{2}}(\Gamma_{\e})$ such that the Lagrange functional $\mathcal{L}$ satisfies relations \eqref{6.1}--\eqref{6.2}. The direct computations show that, in view of \eqref{6.4}, the condition \eqref{6.1} takes the form
\begin{multline}
\label{6.8}
\left\langle\mathcal{D}_y\,\widehat{L}_\e(A_\e^0, v_\e^0, y_\e^0,p_1,p_2),h\right\rangle_{Y^\ast;Y}=
2\int_{\Omega_\e}\left(\nabla h, \left(A_\e^0\right)^{sym}\nabla y^0_\e\right)_{\mathbb{R}^N}\,dx\\
+2
\int_{\Omega_\e}\left(y^0_\e-y_d\right) h\,dx
 +
\left< \frac{\partial h}{ \partial \nu_{A_\e^0}},p_{\,2}\,\right>_{H^{-\frac{1}{2}}(\Gamma_{\e});H^{\frac{1}{2}}(\Gamma_{\e})}\\
 -\int_{\Omega_\e}
\div\big(A_\e^0\nabla h\big)p_1\,dx=0,\quad \forall\,h\in H^2(\Omega_\e)\cap H^1_0(\Omega_\e;\partial\Omega)
\end{multline}
(here we have used the fact that $\mathrm{Im}\,F^\prime_y(\widehat{u},\widehat{y})=V$). As follows from \eqref{6.8} and \eqref{6.5.0}, for $h\in C^\infty_0(\Omega_\e)$, we have
\begin{multline}
\label{6.9}
2\int_{\Omega_\e}\left(\nabla h, \left(A_\e^0\right)^{sym}\nabla y^0_\e\right)_{\mathbb{R}^N}\,dx
+2
\int_{\Omega_\e}\left(y^0_\e-y_d\right) h\,dx\\- \int_{\Omega_\e}\div\left(\left(A_\e^0\right)^t\nabla p_1\,\right) h\,dx=
-2\int_{\Omega_\e} \div\left(\left(A_\e^0\right)^{sym}\nabla y^0_\e\right) h\,dx\\
+2
\int_{\Omega_\e}\left(y^0_\e-y_d\right) h\,dx
 - \int_{\Omega_\e}\div\left(\left(A_\e^0\right)^t\nabla p_1\,\right) h\,dx=0.
\end{multline}
Due to equality \eqref{6.7d} and the initial assumptions \eqref{6.5.0}, relation \eqref{6.9} implies that $\div\left(\left(A_\e^0\right)^t\nabla p_1\right)\in L^2(\Omega_\e)$. Hence,  $\left(A_\e^0\right)^t\nabla p_1\in H(\Omega_\e;\div)$, where
\[
H(\Omega_\e;\div)=\left\{\xi\ |\ \xi\in L^2(\Omega_\e;\mathbb{R}^N),\  \div\xi\in L^2(\Omega_\e)\right\}.
\]
Thanks to Lipschitz properties of $\partial\Omega_\e$, we can conclude that (see, for instance, \cite{Lions_Mag:72,Cioran})
$\partial p_1/ \partial \nu_{(A_\e^0)^t}\in H^{-\frac{1}{2}}(\partial\Omega_{\e})$ and the map
\[
\left(A_\e^0\right)^t\nabla p_1\in H(\Omega_\e;\div)\ \mapsto \frac{\partial p_1}{ \partial \nu_{(A_\e^0)^t}}\in H^{-\frac{1}{2}}(\partial\Omega_{\e})
\]
is linear and continuous. Moreover, if $\left(A_\e^0\right)^t\nabla p_1\in H(\Omega_\e;\div)$ and $h\in H^2(\Omega_\e)\cap H^1_0(\Omega_\e;\partial\Omega)$, then  the Green formula
\begin{align}
\notag
-\int_{\Omega_\e}\div & \big(A_\e^0\nabla h\big) p_1\,dx=-\int_{\Omega_\e}\div \big(\left(A_\e^0\right)^t\nabla p_1\,\big) h\,dx\\
&-\left< \frac{\partial h}{ \partial \nu_{A_\e^0}},\gamma^0_{\partial\Omega_\e}\big(p_1\big)\,\right>_{H^{-\frac{1}{2}}(\Omega_{\e});H^{\frac{1}{2}}(\Omega_{\e})}
+ \left< \frac{\partial p_1}{ \partial \nu_{(A_\e^0)^t}},h\right>_{H^{-\frac{1}{2}}(\Gamma_{\e});H^{\frac{1}{2}}(\Gamma_{\e})}
\label{6.10}
\end{align}
is valid.  Then, combining this relation with \eqref{6.8}--\eqref{6.9}, we arrive at the following identity
\begin{multline}
\label{6.11}
\left\langle\mathcal{D}_y\,\widehat{L}_\e(A_\e^0, v_\e^0, y_\e^0,p_1,p_2),h\right\rangle_{Y^\ast;Y}=
\left< \frac{\partial p_1}{ \partial \nu_{(A_\e^0)^t}},h\,\right>_{H^{-\frac{1}{2}}(\Gamma_{\e});H^{\frac{1}{2}}(\Gamma_{\e})}\\ -
\left< \frac{\partial h}{ \partial \nu_{A_\e^0}},\gamma^0_{\partial\Omega_\e}\big(p_1\big)\,\right>_{H^{-\frac{1}{2}}(\Omega_{\e});H^{\frac{1}{2}}(\Omega_{\e})}+
\left< \frac{\partial h}{ \partial \nu_{A_\e^0}},p_{\,2}\,\right>_{H^{-\frac{1}{2}}(\Gamma_{\e});H^{\frac{1}{2}}(\Gamma_{\e})}=0,
\end{multline}
which is valid for all $h\in H^2(\Omega_\e)\cap H^1_0(\Omega_\e;\partial\Omega)$ and all $p=(p_1,p_2)$ such that
\begin{equation}
\label{6.12}
\begin{array}{c}
p_1\ \text{satisfies \eqref{6.9}},\\[1ex]
(p_1,p_{\,2})\in L^2(\Omega_\e)\times H^{\frac{1}{2}}(\Gamma_{\e})\ \text{ and }\
\left(A_\e^0\right)^t\nabla p_1\in H(\Omega_\e,\mathrm{div}).
\end{array}
\end{equation}

As follows from \eqref{6.11}, for each $
h\in C^\infty_0(\mathbb{R}^N;\Gamma_\e)\cap C_0(\mathbb{R}^N;\partial\Omega)\subset H^2(\Omega_\e)\cap H^1_0(\Omega_\e;\partial\Omega),
$
we have
\[
\left< \frac{\partial h}{ \partial \nu_{A_\e^0}},\gamma^0_{\partial\Omega}\big(p_1\big)\,\right>_{H^{-\frac{1}{2}}(\partial \Omega);H^{\frac{1}{2}}(\partial \Omega)}=0.
\]
Since $C^\infty_0(\mathbb{R}^N;\Gamma_\e)\cap C_0(\mathbb{R}^N;\partial\Omega)$ is dense in $H^{-\frac{1}{2}}(\partial \Omega)$ and
the matrix $\left(A_\e^0\right)^{sym}$ is positive defined, it follows that
\begin{equation}
\label{6.13}
\gamma^0_{\partial\Omega}\big(p_1\big)=0.
\end{equation}
Hence, equality \eqref{6.11}, for all $h\in C^\infty_0(\mathbb{R}^N;\Gamma_\e)$, gives
\begin{equation}
\label{6.14}
\left< \frac{\partial h}{ \partial \nu_{A_\e^0}},p_{\,2}\,\right>_{H^{-\frac{1}{2}}(\Gamma_{\e});H^{\frac{1}{2}}(\Gamma_{\e})}-
\left< \frac{\partial h}{ \partial \nu_{A_\e^0}},\gamma^0_{\Gamma_\e}\big(p_1\big)\,\right>_{H^{-\frac{1}{2}}(\Gamma_{\e});H^{\frac{1}{2}}(\Gamma_{\e})}=0.
\end{equation}
Taking into account the fact that the mapping
$$\partial/ \partial \nu_{A_\e^0}: H^2(\Omega_\e)\cap H^1_0(\Omega_\e;\partial\Omega)\rightarrow H^{\frac{1}{2}}(\Gamma_{\e})$$
is an epimorphism (see Theorem~1.1.4 in \cite{Fursik}), from \eqref{6.14} it follows that
\begin{equation}
\label{6.15}
\gamma^0_{\Gamma_\e}\big(p_1\big)=p_{\,2}.
\end{equation}

Thus, in view of \eqref{6.13} and \eqref{6.15}, relation \eqref{6.11} takes the form
\[
\left\langle\mathcal{D}_y\,\widehat{L}_\e(A_\e^0, v_\e^0, y_\e^0,p_1,\gamma^0_{\Gamma_\e}\big(p_{1}\big)),h\right\rangle_{Y^\ast;Y}=
\left< \frac{\partial p_1}{ \partial \nu_{(A_\e^0)^t}},h\,\right>_{H^{-\frac{1}{2}}(\Gamma_{\e});H^{\frac{1}{2}}(\Gamma_{\e})}=0
\]
for all $h\in H^2(\Omega_\e)\cap H^1_0(\Omega_\e;\partial\Omega)$.
Applying the same arguments as before, we finally conclude that
\begin{equation}
\label{6.16}
\frac{\partial p_1}{ \partial \nu_{(A_\e^0)^t}}=0\quad\text{on }\ \Gamma_\e\ \text{ (in the sense of distribution)}.
\end{equation}

As a result, having gathered relations \eqref{6.9}, \eqref{6.13}, and \eqref{6.16}, we arrive at the boundary value problem
\eqref{6.6}--\eqref{6.6b}. Moreover, by the regularity of solutions to the problem \eqref{6.6}--\eqref{6.6b}, we have
$p_\e\in H^2(\Omega_\e)\cap H^1_0(\Omega_\e;\partial\Omega)$ \cite{Gilbarg}.

In order to end of the proof of this theorem, it remains to show the validity of the relations \eqref{6.7a}--\eqref{6.7b}.
With that in mind, we note that, in view of the structure \eqref{6.3a}--\eqref{6.3c}, condition \eqref{6.2} takes the form
\begin{multline}
\left(\mathcal{D}_A \mathcal{L}(A_\e^0, v_\e^0, y_\e^0,p_\e,\gamma^0_{\Gamma_\e}(p_\e)),A-A_\e^0\right)_{L^2(\Omega;\mathbb{M}^N)}\ge 0,\quad\forall\, A\in \mathfrak{A}^\e_{ad}\quad\Longrightarrow\\
\int_{\Omega_\e}\left(\nabla y^0_\e+\nabla p_\e, \left(A^{sym}-(A^0_\e)^{sym}\right)\nabla y^0_\e\right)_{\mathbb{R}^N}\,dx\\
+ \int_{\Omega_\e}\left(\nabla p_\e, \left(A^{skew}-(A^0_\e)^{skew}\right)\nabla y^0_\e\right)_{\mathbb{R}^N}\,dx
\ge 0,\quad\forall\, A\in \mathfrak{A}^\e_{ad},
\label{6.17}
\end{multline}
\begin{equation}
\label{6.18}
\mathcal{D}_v \mathcal{L}(A_\e^0, v_\e^0, y_\e^0,p_\e,\gamma^0_{\Gamma_\e}(p_\e))=0\quad\Longrightarrow
\frac{2}{\e^\sigma} v^0_\e -\Lambda_{H^{\frac{1}{2}}(\Gamma_{\e})}\gamma^0_{\Gamma_\e}(p_\e)=0,
\end{equation}
 Here, we have used the fact that $H^{\frac{1}{2}}(\Gamma_{\e})$ can be reduced to a Hilbert space with respect to an appropriate equivalent norm, and, hence, $H^{-\frac{1}{2}}(\Gamma_{\e})$ is a dual Hilbert space as well (for the details we refer to Lions and Magenes \cite[p.35]{Lions_Mag:72}).
\end{proof}
\begin{remark}
\label{Rem 6.4}
In view of the assumption \eqref{6.5.0}, we make use of the following observation.
Let $\left\{(A_\e,v_\e,y_\e)\in  \Xi_\e\right\}_{\e>0}$ be a weakly convergent sequence in the sense of Definition~\ref{Def 5.10}. Since in this case $\left\{y_\e\in H^1_0(\Omega_\e;\partial\Omega)\right\}_{\e>0}$ are the solutions to the boundary value problem \eqref{6.4.1}--\eqref{6.4.2} with $A=A_\e$, and $g=f_\e\in L^2(\Omega)$, and $w=v_\e\in H^{-\frac{1}{2}}(\Gamma_\e)$, it follows that
the sequence $\left\{\div\big(A_\e\nabla y_\e\big)\chi_{\Omega_\e}\right\}_{\e>0}$ is obviously bounded in $L^2(\Omega)$. However, because of the non-symmetry of $L^2$-matrices $\left\{A_\e\right\}_{\e>0}$, it does not imply the same property for the sequence $\left\{\div\big(A^{skew}_\e\nabla y_\e\big)\chi_{\Omega_\e}\right\}_{\e>0}$. In order to guarantee this property, we make use of the notion of divergence $\div A$ of a skew-symmetric matrix $A\in L^2\big(\Omega;\mathbb{S}^N_{skew}\big)$. We define it as a vector-valued distribution
$d\in H^{-1}(\Omega;\mathbb{R}^N)$ following the rule
\begin{equation}
\label{6.0a}
\left<d_i,\varphi\right>_{H^{-1}(\Omega);H^1_0(\Omega)}= -\int_\Omega
(a_i,\nabla\varphi)_{\mathbb{R}^N}\,dx,\  \forall\,\varphi\in
C^\infty_0(\Omega),\ \forall\,i\in\left\{1,\dots,N\right\},
\end{equation}
where $a_i$ stands for the $i$-th column of the matrix $A$. As a result, we can give the following conclusion:
if $\div A_\e^{skew}\in L^\infty(\Omega;\mathbb{R}^N)$ for all $\e>0$ and the sequence $\left\{\div A_\e^{skew}\right\}_{\e>0}$ is uniformly bounded in $L^\infty(\Omega;\mathbb{R}^N)$, then there exists a constant $C>0$ independent of $\e$ such that
\begin{equation}
\label{6.0b}
\sup_{\e>0} \left\|\chi_{\Omega_\e} \div\big(A^{skew}_\e\nabla y_\e\big)\right\|_{L^2(\Omega)}\le C.
\end{equation}
Indeed, since
\begin{align*}
-&\big<\mathrm{div}\,\left(A_\e^{skew}\nabla \psi_\e\right),\chi_{\Omega_\e}\varphi\big>_{H^{-1}(\Omega);H^1_0(\Omega)}
=
-\left<\mathrm{div}\,\left(A_\e^{skew}\nabla \psi_\e\right),\varphi\right>_{H^{-1}(\Omega_\e);H^1_0(\Omega_\e)}\\
&=\Big< \mathrm{div}\,\left[
\begin{array}{c}
a^t_{1,\e}\nabla \psi_\e\\
\cdots \\a^t_{N,\e}\nabla \psi_\e
\end{array}\right],\varphi\Big>_{H^{-1}(\Omega_\e);H^1_0(\Omega_\e)}
=
\sum_{i=1}^N\left<\mathrm{div}\,a_{i,\e}, \varphi\frac{\partial\psi_\e}{\partial x_i}\right>_{H^{-1}(\Omega_\e);H^1_0(\Omega_\e)}\\
&+ \underbrace{\int_{\Omega_\e}
\sum_{i=1}^N \sum_{j=1}^N \left(a_{ij,\e} \frac{\partial^2 \psi_\e}{\partial x_i\partial x_j}\right) \varphi\,dx}_{=0\ \atop {\text{ since }\ A_\e^{skew}\in L^2(\Omega;\mathbb{S}^N_{skew})}}=\int_{\Omega_\e} \left(\,\div A_\e^{skew},\nabla \psi_\e\right)_{\mathbb{R}^N} \varphi\,dx,\\
\end{align*}
for any $\psi_\e,\varphi\in C^\infty_0(\Omega_\e)$ (due to the fact that $\div A_\e^{skew}\in L^\infty(\Omega;\mathbb{R}^N)$ for all $\e>0$), it follows that this relation can be extended by continuity to the following one
\[
-\left<\mathrm{div}\,\left(A_\e^{skew}\nabla y_\e\right),\chi_{\Omega_\e}\varphi\right>_{H^{-1}(\Omega);H^1_0(\Omega)}
=\int_{\Omega_\e} \left(\,\div A_\e^{skew},\nabla y_\e\right)_{\mathbb{R}^N} \varphi\,dx.
\]
Hence,
\begin{align*}
\left\|\chi_{\Omega_\e} \div\big(A^{skew}_\e\nabla y_\e\big)\right\|_{L^2(\Omega)}&\le
(\mathcal{L}^N(\Omega))^{1/2}\|\div A_\e^{skew}\|_{L^\infty(\Omega;\mathbb{R}^N)}\\
&\times \|\nabla y_\e\|_{L^2(\Omega_\e;\mathbb{R}^N)}<+\infty.
\end{align*}
To deduce the estimate \eqref{6.0b}, it remains to refer to the boundedness of $y_\e$ in variable $H^1(\Omega_\e;\partial\Omega)$ (see Definition~\ref{Def 5.10}).
\end{remark}

Our next intention is to provide an asymptotic analysis of the optimality system \eqref{6.5}--\eqref{6.7b} as $\e$ tends to zero. With that in mind, we assume the fulfilment of the following Hypotheses:
\begin{enumerate}
\item[(H1)] For each admissible control $A\in \mathfrak{A}_{ad}$  the corresponding bilinear form $[y,\varphi]_A$ is continuous in the following sense:
\begin{equation}
\label{6.19.a}
\lim_{\e\to 0} [y_\e,p_\e]_A = [y,p\,]_A
\end{equation}
provided
$\left\{p_\e\right\}_{\e>0}\subset H^1_0(\Omega)$, $\left\{y_\e\right\}_{\e>0}\subset H^1_0(\Omega)$, $y_\e\rightharpoonup y$ in $H^1_0(\Omega)$, $p_\e\rightarrow p$ in $H^1_0(\Omega)$, and  $y, y_\e\in D(A)$ for $\e>0$ small enough.
\item[(H2)] Let $\left\{(A_\e^0, v_\e^0, y_\e^0, p_\e\,)\right\}_{e>0} $ be a sequence of tuples such that, for each $\e>0$ the corresponding cortege $(A_\e^0, v_\e^0, y_\e^0, p_\e\,)$ satisfies the optimality system \eqref{6.5}--\eqref{6.7b}. Then there exists a sequence of extension operators
    $$\left\{P_\e\in \mathcal{L}\left(H^1_0(\Omega_\e;\partial\Omega),H^1_0(\Omega)\right)\right\}_{\e>0}$$
    and element $\overline{\psi}\in H^1_0(\Omega)$ such that
\[
P_\e(p_\e) \rightarrow \overline{\psi}\quad\text{strongly in }\ H^1_0(\Omega)\quad\text{and}\quad \overline{\psi}\in D(A^\ast).
\]
\end{enumerate}

\begin{theorem}
\label{Th 6.19} Let $y_d\in L^2(\Omega)$ and $f\in
H^{-1}(\Omega)$ be given distributions. Let $A^\ast\in L^2\big(\Omega;\mathbb{S}^N_{skew}\big)$ be a matrix of the $\mathfrak{F}$-type. Let $\left\{(A^0_\varepsilon,v^0_\e,y^0_\e)\in \Xi_\varepsilon\right\}_{\e>0}$ be a sequence of optimal solutions to regularized problems \eqref{5.4}--\eqref{5.5b}, and let $(A^0,y^0)\in D(A^\ast)\times H^1_0(\Omega)$ be its $w$-limit. Let $\left\{p^0_\e\in H^1_0(\Omega_\e;\partial\Omega)\right\}_{\e>0}$ be a sequence of corresponding adjoint states. Then, the fulfilment of the Hypotheses (H1)--(H2) implies that $(A^0,y^0)\in \mathfrak{A}_{ad}\times H^1_0(\Omega)$ is an optimal pair to the original OCP \eqref{2.3}--\eqref{2.3a} and there exists an element $\overline{\psi}\in H^1_0(\Omega)$ such that
\begin{gather}
\label{6.20}
(A^0_\varepsilon,v^0_\e,y^0_\e)\,\stackrel{w}{\rightarrow}\, (A^0,y^0)\ \text{ as }\ \e\to 0,\\
\label{6.21}
P_\e(p_\e) \rightarrow \overline{\psi}\quad\text{strongly in }\ H^1_0(\Omega),\\
\label{6.22}
-\div\big(A^0\nabla y^0\big) =\  f\quad\text{in }\ \Omega,\quad
y=0\quad\text{on }\ \partial\Omega,
\\
\begin{split}
\div\left(\left(A^0\right)^t\nabla \overline{\psi}\right)&=\ - 2\,\div\left(\left(A^0\right)^{sym}\nabla y^0\right)+ 2\left(y^0-y_d\right)
\   \text{ in }\ \Omega,\\
\overline{\psi}&=0\quad\text{on }\ \partial\Omega,
\end{split}
\label{6.23}\\
\int_\Omega \big(\nabla y^0,\big(A^{sym}-\left(A^0\right)^{sym}\big)\left(\nabla y^0+\nabla\overline{\psi}\right)\big)_{\mathbb{R}^N}\,dx\notag\\
\ge\   [y^0,\overline{\psi}]_{A^0}-[y^0,\overline{\psi}]_{A},\ \forall A\in \mathfrak{A}_{ad},
\label{6.24}
\end{gather}
\end{theorem}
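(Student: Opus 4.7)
The strategy is to derive the limit system in three parts matching \eqref{6.22}, \eqref{6.23}, and \eqref{6.24}, relying on Theorem~\ref{Th 5.31} for the state system and invoking Hypotheses (H1)--(H2) to handle the adjoint equation and the variational inequality.

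First, I would invoke Theorem~\ref{Th 5.31} applied to the sequence of minimizers $(A^0_\e,v^0_\e,y^0_\e)$; this yields at once \eqref{6.20}, the optimality of $(A^0,y^0)$ for the original OCP, and the state equation \eqref{6.22}. Hypothesis (H2) gives \eqref{6.21}. A crucial by-product of the cost convergence \eqref{5.33} combined with the strong $L^2$-convergence $\|y^0_\e-y_d\|^2_{L^2(\Omega_\e)}\to\|y^0-y_d\|^2_{L^2(\Omega)}$ and lower semicontinuity of the symmetric quadratic form is the simultaneous identification
\[
\lim_{\e\to 0}\int_{\Omega_\e}(\nabla y^0_\e,(A^0_\e)^{sym}\nabla y^0_\e)\,dx=\int_\Omega(\nabla y^0,(A^0)^{sym}\nabla y^0)\,dx,\quad \lim_{\e\to 0}\e^{-\sigma}\|v^0_\e\|^2_{H^{-\frac12}(\Gamma_\e)}=0,
\]
which is instrumental for the remaining steps. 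To obtain \eqref{6.23}, I would rewrite \eqref{6.6}--\eqref{6.6b} in variational form against $h\in C^\infty_0(\Omega)$, integrate by parts using the homogeneous Neumann condition, and pass to the limit exploiting: the strong $H^1_0$-convergence $P_\e(p_\e)\to\overline\psi$ from (H2), the strong $L^2$-convergence $A^0_\e\nabla h\to A^0\nabla h$ (valid since $h$ is smooth and $A^0_\e\to A^0$ strongly in $L^2$), the weak convergence of $y^0_\e$ in variable spaces, and the compact embedding $H^1_0(\Omega)\hookrightarrow L^2(\Omega)$.

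The core of the argument is the passage to the limit in \eqref{6.7b} to produce \eqref{6.24}. After expansion against an arbitrary $A\in\mathfrak{A}_{ad}\subset\mathfrak{A}_{ad}^\e$, the symmetric contribution splits into three pieces: (i) the pure quadratic $\int(\nabla y^0_\e,A^{sym}\nabla y^0_\e)\,dx$ with fixed $A^{sym}$, handled by lower semicontinuity in the direction preserving $\ge 0$; (ii) the pure quadratic $\int(\nabla y^0_\e,(A^0_\e)^{sym}\nabla y^0_\e)\,dx$, which converges by the first step; (iii) mixed terms $\int(\nabla p_\e,(A^{sym}-(A^0_\e)^{sym})\nabla y^0_\e)\,dx$, which converge by strong-times-weak arguments using $\nabla p_\e\to\nabla\overline\psi$ strongly in $L^2$. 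For the skew contribution, I write $\int(\nabla p_\e,A^{skew}\nabla y^0_\e)\,dx=[y^0_\e,p_\e]_A$; applying (H1) with the fixed matrices $A$ and $A^0$ gives $[y^0_\e,p_\e]_A\to[y^0,\overline\psi]_A$ and $[y^0_\e,p_\e]_{A^0}\to[y^0,\overline\psi]_{A^0}$. The remaining residual
\[
\int_\Omega\bigl(\nabla p_\e,\bigl((A^0_\e)^{skew}-(A^0)^{skew}\bigr)\nabla y^0_\e\bigr)\chi_{\Omega_\e}\,dx
\]
must be shown to vanish using the strong $L^2$-convergence $(A^0_\e)^{skew}\to(A^0)^{skew}$, the pointwise domination $(A^0_\e)^{skew}\preceq A^\ast$ on $\Omega_\e$ (providing dominated-convergence control), and once again the strong convergence of $\nabla p_\e$.

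The main obstacle is exactly this residual skew-symmetric term, where both factors live only in $L^2$ and the matrix itself varies with $\e$; neither the $L^\infty$-boundedness available in the symmetric case nor a direct application of (H1) applies verbatim. The resolution combines the envelope $A^\ast$ afforded by the $\mathfrak{F}$-type hypothesis (yielding pointwise domination and hence a dominated-convergence reduction), the strong $L^2$-convergence of $A^0_\e$, and, most importantly, the strong $H^1_0$-convergence of $p_\e$ guaranteed by (H2), which turns the residual into a strong-times-weak pairing. A secondary technical issue is to guarantee that the extended sequence $P_\e(y^0_\e)$ lies in $D(A)$ uniformly so that (H1) is applicable; this follows from the strong connectedness property of Remark~\ref{Rem 5.1} together with the $L^2$-envelope inherited from the $\mathfrak{F}$-type structure of $A^\ast$.
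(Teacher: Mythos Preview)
Your overall architecture is right, but the passage to the limit in the variational inequality \eqref{6.7b} contains two genuine gaps.

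\textbf{The sign of the lower semicontinuity is wrong.} In \eqref{6.7b} the pure quadratic term $J_1^\e(A):=\int_{\Omega_\e}(\nabla y^0_\e,A^{sym}\nabla y^0_\e)\,dx$ sits on the \emph{left}, i.e.\ on the $\ge 0$ side. Writing the inequality as $J_1^\e(A)-J_2^\e+J_3^\e(A)\ge 0$ and assuming the other two terms have full limits, passing to the $\liminf$ yields only $\liminf_\e J_1^\e(A)\ge J_2-J_3(A)$. Weak lower semicontinuity gives the \emph{same} direction, $\liminf_\e J_1^\e(A)\ge J_1(A)$, so you obtain two lower bounds for $\liminf_\e J_1^\e(A)$ and \emph{no} relation between $J_1(A)$ and $J_2-J_3(A)$. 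The paper closes this gap by an additional contradiction argument based on the Lagrange principle (its Step~4): assuming $J_1(A_\sharp)<J_2-J_3(A_\sharp)$ for some $A_\sharp$, one shows this contradicts minimality of $(A^0,y^0)$ via the Lagrange functional. Your proposal omits this step entirely, and without it \eqref{6.24} does not follow.

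\textbf{The residual skew term is not controlled by your argument.} You propose to show
\[
R_\e:=\int_{\Omega_\e}\bigl(\nabla p_\e,\bigl((A^0_\e)^{skew}-(A^0)^{skew}\bigr)\nabla y^0_\e\bigr)\,dx\to 0
\]
using the strong $L^2$-convergence of $(A^0_\e)^{skew}$, the strong $L^2$-convergence of $\nabla p_\e$, and dominated convergence with the envelope $A^\ast$. But this is a triple product of $L^2$ factors, and neither $(A^0_\e)^{skew}-(A^0)^{skew}$ nor $\nabla p_\e$ enjoys any uniform $L^\infty$ bound; the pointwise domination $|(A^0_\e)^{skew}|\le 1/\e$ on $\Omega_\e$ blows up. Dominated convergence at best gives $L^1$-convergence of $((A^0_\e)^{skew}-(A^0)^{skew})\nabla\overline\psi$, which still cannot be paired against the merely $L^2$-bounded $\nabla y^0_\e$. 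The paper circumvents this entirely by a substitution trick (its Step~3): it plugs $\varphi=y^0_\e$ into the adjoint identity \eqref{6.25} to rewrite $\int_{\Omega_\e}(\nabla y^0_\e,(A^0_\e)^t\nabla p_\e)\,dx$ as $-2J_2^\e-2\int_{\Omega_\e}(y^0_\e-y_d)y^0_\e\,dx$, both of which converge by the energy convergence \eqref{5.33} and compact embedding. This avoids any direct manipulation of the variable skew matrix against two gradients. Your proposal is missing this key algebraic step. A related issue is that applying (H1) to $[P_\e(y^0_\e),p_\e]_{A^0}$ requires $P_\e(y^0_\e)\in D(A^0)$, which does not follow from the $H^1$-extension of Remark~\ref{Rem 5.1} alone; the paper does not need this because its substitution bypasses the bracket for $A^0_\e$.
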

\begin{proof}
To begin with, we note that due to Theorem~\ref{Th 5.31}, the sequence of optimal solutions $\left\{(A^0_\varepsilon,v^0_\e,y^0_\e)\in \Xi_\varepsilon\right\}_{\e>0}$ to the regularized problems \eqref{5.4}--\eqref{5.5b}
is compact with respect to $w$-convergence and each of its $w$-cluster pairs $(A^0,y^0)$ is an optimal pair to the original problem \eqref{2.3}--\eqref{2.3a}. Hence, $(A^0,y^0)\in \mathfrak{A}_{ad}$, and the limit passage in \eqref{6.5}--\eqref{6.5b}
as $\e\to 0$ leads us to the relation \eqref{6.22} in the sense of distributions. In what follows, we divide the proof onto several steps.

Step~1. Since the integral identity
\begin{align}
\notag
&\int_{\Omega} \big(\nabla \varphi,\left(A_\e^0\right)^{sym}\nabla P_\e(p_\e)-\left(A_\e^0\right)^{skew}\nabla P_\e(p_\e)\big)_{\mathbb{R}^N}\chi_{\Omega_{\e}}\,dx\\
&=-2\int_{\Omega}\left(\nabla \varphi, \left(A_\e^0\right)^{sym}\nabla P_\e(y^0_\e)\right)_{\mathbb{R}^N}\chi_{\Omega_{\e}}\,dx
-2
\int_{\Omega}\left(P_\e(y^0_\e)-y_d\right) \varphi\chi_{\Omega_{\e}}\,dx
\label{6.25}
\end{align}
holds true for every $\e>0$ and $\varphi\in C^\infty_0(\Omega)$, we can pass to the limit in \eqref{6.25} as $\e\to 0$ due to Hypothesis (H2) and Definition \ref{Def 5.10} (here, we apply the arguments of Remark~\ref{Rem 5.3.1}). Using the strong convergence $\chi_{\Omega_{\e}}\rightarrow \chi_\Omega$ in $L^2(\Omega)$ (see Proposition~\ref{Prop 5.3}), we arrive at the equality
\begin{align}
\notag
\int_{\Omega} \big(\nabla \varphi,&\left(A^0\right)^{t}\nabla \overline{\psi}\big)_{\mathbb{R}^N}\,dx
=-2\int_{\Omega}\left(\nabla \varphi, \left(A^0\right)^{sym}\nabla y^0\right)_{\mathbb{R}^N}\,dx\\
&-2
\int_{\Omega}\left(y^0-y_d\right) \varphi\,dx,\quad
 \forall\,\varphi\in C^\infty_0(\Omega).
\label{6.25.a}
\end{align}
Hence, $\overline{\psi}\in D(A^0)\subset H^1_0(\Omega)$ (see Proposition~5 in \cite{HK_1}) and $\overline{\psi}$ satisfies relation \eqref{6.23} in the sense of distributions.

Step~2. On this step we study the limit passage in inequality \eqref{6.7b} as $\e\to 0$. To this end, we rewrite it as follows
\begin{equation}
\label{6.26}
J_1^\e(A)\ge J_2^\e - J_3^\e(A),\quad\forall\, A\in \mathfrak{A}^\e_{ad},\ \forall\,\e>0,
\end{equation}
where
\begin{align}
\label{6.27}
J_1^\e(A)=\ & \int_{\Omega_\e}\big(\nabla y^0_\e, A^{sym}\nabla y^0_\e\big)_{\mathbb{R}^N}\,dx,\\
\label{6.28}
J_2^\e=\ & \int_{\Omega_\e}\big(\nabla y^0_\e, (A^0_\e)^{sym}\nabla y^0_\e\big)_{\mathbb{R}^N}\,dx,\\
\label{6.29}
J_3^\e(A)=\ & \int_{\Omega_\e}\left(\nabla y^0_\e, \left(A^{t}-(A^0_\e)^{t}\right)\nabla p_\e\right)_{\mathbb{R}^N}\,dx.
\end{align}
By Theorem~\ref{Th 5.31} (see \eqref{5.33}), we have
\begin{align}
\notag
I\left(A^0,y^0\right):=\ & \left\|y^0-y_d\right\|^2_{L^2(\Omega)} + \int_{\Omega}\left(\nabla y^0, \left(A^0\right)^{sym}\nabla y^0\right)_{\mathbb{R}^N}\,dx\\
\notag =\ &\lim_{\e\to 0} I_{\e}(A^0_\e,v^0_\e,y^0_\e):= \lim_{\e\to 0} \left\|(y_\e^0-y_d)\chi_{\Omega_\e}\right\|^2_{L^2(\Omega)}\\
&+ \lim_{\e\to 0} \int_{\Omega_\e}\left(\nabla y_\e^0, \left(A_\e^0\right)^{sym}\nabla y_\e^0\right)_{\mathbb{R}^N}\,dx
+ \lim_{\e\to 0} \frac{1}{\e^\sigma}\|v_\e^0\|^2_{H^{-\frac{1}{2}}(\Gamma_\e)}.
\label{6.30}
\end{align}
Since
\begin{equation}
\label{6.30.aa}
\lim_{\e\to 0} \left\|(y_\e^0-y_d)\chi_{\Omega_\e}\right\|^2_{L^2(\Omega)}=\left\|y^0-y_d\right\|^2_{L^2(\Omega)}
\end{equation}
by the compactness of the embedding $H^1_0(\Omega)\hookrightarrow L^2(\Omega)$, and $\lim_{\e\to 0} \e^{-\sigma}\|v_\e^0\|^2_{H^{-\frac{1}{2}}(\Gamma_\e)}=0$ by Theorem~\ref{Th 5.31} (see estimate \eqref{5.30.3}), it follows from \eqref{6.30} that
\begin{equation}
\label{6.31}
\lim_{\e\to 0}J_2^\e = \int_{\Omega}\left(\nabla y^0, \left(A^0\right)^{sym}\nabla y^0\right)_{\mathbb{R}^N}\,dx=:J_2.
\end{equation}

Step~3. As for the term $J_3^\e(A)$, we see that
\begin{align}
\notag
\lim_{\e\to 0}J_3^\e(A)=\ &\lim_{\e\to 0} \int_{\Omega_\e}\left(\nabla y^0_\e, (A^0_\e)^{t}\nabla p_\e\right)_{\mathbb{R}^N}\,dx=(\ \text{by \eqref{6.25}}\ )\\
\notag=\ &\lim_{\e\to 0}\Big[-2\int_{\Omega}\left(\nabla P_\e(y^0_\e), \left(A_\e^0\right)^{sym}\nabla P_\e(y^0_\e)\right)_{\mathbb{R}^N}\chi_{\Omega_{\e}}\,dx\\
\notag&-2\int_{\Omega}\left(P_\e(y^0_\e)-y_d\right) P_\e(y^0_\e)\chi_{\Omega_{\e}}\,dx\Big] =
(\ \text{by \eqref{6.31} and \eqref{6.30.aa}}\ )\\
\notag=\ &
-2\int_{\Omega}\left(\nabla y^0, \left(A^0\right)^{sym}\nabla y^0\right)_{\mathbb{R}^N}\,dx
-2  \int_{\Omega}\left(y^0-y_d\right) y^0\,dx\\
\notag=\ &\lim_{\e\to 0}\Big[-2\int_{\Omega}\left(\nabla P_\e(y^0_\e), \left(A^0\right)^{sym}\nabla y^0\right)_{\mathbb{R}^N}\chi_{\Omega_{\e}}\,dx\\
\notag&-2  \int_{\Omega}\left(y^0-y_d\right) P_\e(y^0_\e)\chi_{\Omega_{\e}}\,dx\Big]=(\ \text{by \eqref{6.25.a}}\ )\\
\notag=\ &
\lim_{\e\to 0} \int_{\Omega} \big(\nabla P_\e(y^0_\e),\left(A^0\right)^{t}\nabla \overline{\psi}\big)_{\mathbb{R}^N}\chi_{\Omega_{\e}}\,dx\\
\notag=\ & \int_{\Omega} \big(y^0,\left(A^0\right)^{sym}\nabla \overline{\psi}\big)_{\mathbb{R}^N}\,dx +\lim_{\e\to 0} [P_\e(y^0_\e)\chi_{\Omega_{\e}},\overline{\psi}\,]_{A^0}=(\text{by (H2)})\\
\label{6.32}=\ &
\int_{\Omega} \big(y^0,\left(A^0\right)^{sym}\nabla \overline{\psi}\big)_{\mathbb{R}^N}\,dx + [y^0,\overline{\psi}\,]_{A^0}
\end{align}
and
\begin{align}
\notag
\lim_{\e\to 0} \int_{\Omega_\e}\left(\nabla y^0_\e, A^{t}\nabla p_\e\right)_{\mathbb{R}^N}\,dx&=
\int_{\Omega}\left(\nabla y^0, A^{sym}\nabla \overline{\psi}\right)_{\mathbb{R}^N}\,dx\\
&+\lim_{\e\to 0} \int_{\Omega}\left(\nabla P_\e(p_\e), A^{skew}\nabla P_\e(y^0_\e)\right)_{\mathbb{R}^N}\chi_{\Omega_\e}\,dx
\label{6.33}
\end{align}
as the limit of product of weakly and strongly convergence sequences in $L^2(\Omega;\mathbb{R}^N)$. Hence, combining relations \eqref{6.32} and \eqref{6.33}, we get
\begin{align}
\notag
\lim_{\e\to 0}&J_3^\e(A) =\int_\Omega \big(y^0,\left(A^{sym}-\left(A^0\right)^{sym}\right)\nabla \overline{\psi}\big)_{\mathbb{R}^N}\,dx - [y^0,\overline{\psi}\,]_{A^0}\\
\notag
&+ \lim_{\e\to 0} \int_{\Omega}\left(\nabla P_\e(p_\e), A^{skew}\nabla P_\e(y^0_\e)\right)_{\mathbb{R}^N}\chi_{\Omega_\e}\,dx=(\text{by Hypotheses (H1)--(H2)})\\
=\ &\int_\Omega \big(y^0,\left(A^{sym}-\left(A^0\right)^{sym}\right)\nabla \overline{\psi}\big)_{\mathbb{R}^N}\,dx - [y^0,\overline{\psi}\,]_{A^0}+[y^0,\overline{\psi}\,]_{A}=:J_3(A).
\label{6.34}
\end{align}

Step~4. At this step we study the asymptotic behaviour of the term $J_1^\e(A)$ in \eqref{6.27} as $\e\to 0$.
To this end, we note that in view of the property \eqref{1.1},  the lower semicontinuity of $L^2$-norm with respect to the weak convergence, immediately leads us to the inequality
\begin{align}
\notag
\lim_{\e\to 0}&J_1^\e(A)=\liminf_{\e\to 0} \int_{\Omega_{\e}}\left(\nabla y_\e^0, A^{sym} \nabla y_\e^0\right)_{\mathbb{R}^N}\,dx\\
\notag
=\ &
\liminf_{\e\to 0} \|\chi_{\Omega_{\e}}\left(A^{sym}\right)^{1/2}\nabla y_\e^0\|^2_{L^2(\Omega;\mathbb{R}^N)}\\
\ge\ & \|\left(A^{sym}\right)^{1/2}\nabla y^0\|^2_{L^2(\Omega;\mathbb{R}^N)}=
\int_{\Omega}\left(\nabla y^0, A^{sym} \nabla y^0\right)_{\mathbb{R}^N}\,dx
= J_1(A).
\label{6.35}
\end{align}
However, because of inequality in \eqref{6.35}, we cannot assert that the limit values are related as follows
\begin{equation}
\label{6.36}
J_1(A)\ge J_2 - J_3(A),\quad\forall\, A\in \mathfrak{A}_{ad}.
\end{equation}
In order to guarantee this relation, we assume the converse, namely, there exists a matrix $A_\sharp\in \mathfrak{A}_{ad}$ such that
$J_1(A_\sharp)< J_2 - J_3(A_\sharp)$. That is, in view of \eqref{6.31},\eqref{6.34}, and \eqref{6.35}, this leads us to the relation
\begin{multline}
\int_{\Omega}\left(\nabla y^0, \left(A_\sharp^{sym} - \left(A^0\right)^{sym}\right)\nabla y^0\right)_{\mathbb{R}^N}\,dx \\
+\int_\Omega \big(y^0,\left(A_\sharp^{sym}-\left(A^0\right)^{sym}\right)\nabla \overline{\psi}\big)_{\mathbb{R}^N}\,dx< [y^0,\overline{\psi}\,]_{A^0}-[y^0,\overline{\psi}\,]_{A_\sharp}.
\label{6.37}
\end{multline}
The direct computations show that, in this case, we arrive at the inequality
\[
\widehat{L}(A_\sharp,y^0,1,\overline{\psi})< \widehat{L}(A^0,y^0,1,\overline{\psi})=I(A_0,y_0)=\inf_{(A,y)\in\Xi}I(A,y),
\]
where $\widehat{L}(A,y,\lambda,p)$ is the Lagrange function given by
\[
\widehat{L}(A,y,\lambda,p)=\lambda I(A,y)+\int_\Omega \big(\nabla p,A^{sym}\nabla y\big)_{\mathbb{R}^N}\,dx +[y,p\,]_A-\langle
f,p\rangle_{H^{-1}(\Omega);H^1_0(\Omega)}.
\]
However, this contradicts with the Lagrange principle, and therefore, the inequality \eqref{6.36} remains valid. Thus, following \eqref{6.36}, we finally get
\begin{equation*}
\int_{\Omega}\left(\nabla y^0, \left(A^{sym} - \left(A^0\right)^{sym}\right)(\nabla y^0+\nabla \overline{\psi})\right)_{\mathbb{R}^N}\,dx
\ge [y^0,\overline{\psi}\,]_{A^0}-[y^0,\overline{\psi}\,]_{A}
\end{equation*}
for all $A\in \mathfrak{A}_{ad}$. This concludes the proof.
\end{proof}

\begin{remark}
As Theorem~\ref{Th 6.19} indicates, the limit passage in optimality system \eqref{6.5}--\eqref{6.7b} for the regularized problems \eqref{5.4}--\eqref{5.5b} as $\e\to 0$ leads to the optimality system for the original OCP \eqref{2.3}--\eqref{2.3a}. However, a strict substantiation of this passage requires rather strong assumptions in the form of Hypotheses (H1)--H2). At the same time, the verification of these Hypotheses becomes trivial provided
\begin{gather}
\label{6.38}
A^\ast\in L^\infty(\Omega;\mathbb{S}^N_{skew})\quad\text{in  \eqref{2.3c}},\\
\label{6.39}
\text{and }\ \exists\,C>0\ :\ \|\div A^{skew}\|_{L^\infty(\Omega;\mathbb{R}^N)}\le C,\quad\forall\,A\in \mathfrak{A}_{ad}.
\end{gather}
Indeed,  in this case the relation \eqref{6.19.a} takes the form
\[
\lim_{\e\to 0} \int_{\Omega}\left(\nabla p_\e, A^{skew}\nabla y_\e\right)_{\mathbb{R}^N}\,dx= \int_{\Omega}\left(\nabla p\,, A^{skew}\nabla y\right)_{\mathbb{R}^N}\,dx
\]
and it holds obviously true provided
$y_\e\rightharpoonup y$ in $H^1_0(\Omega)$, $p_\e\rightarrow p$ in $H^1_0(\Omega)$, and
$A^{skew}\preceq A^\ast\in L^\infty(\Omega;\mathbb{S}^N_{skew})$. Hence, Hypothesis~(H1) is valid. As for Hypothesis~(H2), we see that  admissible controls $A\in \mathfrak{A}_{ad}$ with extra property \eqref{6.39} form a close set with respect to the strong convergence in $L^2(\Omega;\mathbb{S}^N_{skew})$. Moreover, in this case we have that the sequence
$\left\{\chi_{\Omega_\e}\div\,\left(\left(A_\e^0\right)^{skew}\nabla y_\e^0\right)\right\}_{\e>0}$ is uniformly bounded in $L^2(\Omega)$ (see Remark~\ref{Rem 6.4}). Hence, the sequence of adjoint states $\left\{p_\e\right\}_{\e>0}$, given by \eqref{6.6}--\eqref{6.6b}, is bounded in $H^2(\Omega_\e)$ by the regularity of solutions to the problem \eqref{6.6}--\eqref{6.6b}.
Hence, within a subsequence, we can suppose that the sequence $\left\{P_\e(p_\e)\right\}_{\e>0}$ is weakly convergent in $H^2(\Omega)$. This proves Hypothesis (H2).
\end{remark}

\section{Numerical simulations}\label{NumSim}
The main issue of this section is to present numerical simulations that tend to ascertain our approaches developed above. We restrict ourselves to the case when $\Omega$ is the unit ball of ${\mathbb R}^2$ or ${\mathbb R}^3$.

The numerical simulations have been conducted according three guidelines.

For this we consider some  matrix $A_d\in L^2(\Omega)^{N\times N}$ and $y_d$ in $H^1_0(\Omega)$, and set
$$f=f_d:=-{\div}(A_d\nabla y_d).$$
We focus on the following test case:

\begin{align}{\label{Prob_00}}
J_{{test}}(A,y)&:= \|y-y_d\|_{L^2(\Omega)}^2+\frac{\varepsilon_0}{2}\int_\Omega \left(\nabla (y-y_d),A^{{sym}}(y-y_d)\right)\,dx\longrightarrow\inf
\end{align}
subject to
\begin{align}
-\div(A\nabla y)&=f_d,  \label{Prob_0}\quad
y\in H_0^1(\Omega)
\end{align}
with the uniform ellipticity condition on $A^{{sym}}$ given by \eqref{1.1}. For this problem under view the algorithm used should allow to recover the pair $(A_d,y_d)$, because the minimum of \eqref{Prob_00} is clearly $0$.

Once validated, we return to the original OCP \eqref{0.1},  for which we consider singular
$y_d$  and  $A_d$ in two manners: we still consider $A_d$, $y_d$ and $f_d$ with $A_d$ possibly singular at some point $\xi$ of the unit ball $\Omega$  in ${\mathbb R}^2$ or ${\mathbb R}^3$.
We  triangulate $\Omega$  by a triangulation $\tau$ such that no vertices of $\tau$ is $\xi$ and such that no edges of $\tau$ contains $\xi$.

We proceed to the classical gradient algorithm.

In this case, we expect, but cannot prove, that the algorithm converges to a variational solution. Indeed, when projecting on the grid, due to our assumption, we cannot distinguish between singular and non singular data. Moreover, for each projected matrix $A$ in the admissible set, the projected matrix gives rise to a unique solution, thus the projected problem changes in its behavior. And of course as already said, due to the non-singular situation, we are led to think that the sequence of approximate solutions constructed will give rise to a variational solution.

In the final simulation procedure, we have punctured our domain and discretized the OCP given in \eqref{0.2}. Accordingly, there is now no singularity in the punctured domain. We, afterwards, consider refining the punctured domain by reducing the size of the hole.

In the following sections we describe more precisely each scheme and present some numerical results with some interpretations in each case that, we do think, clarifies the situation.

\subsection{Validation}

Throughout this section and the following ones, we will take $A_d$ of the following form:

In the 2d-case

\begin{equation}
\left\lbrace\begin{array}{ll}
\displaystyle{A_d = (1 + (r-1)^2) A_{sym} + \frac{0.1}{r^{0.5}} A_{asym}} \\
\mbox{ with } \\
A_{sym} = \begin{pmatrix}   1.& 0.2\\ 0.2& 1.1\end{pmatrix},\\
A_{asym} = \begin{pmatrix}   0.& 1.\\ -1.& 0.\end{pmatrix},
\end{array}
\right.
\end{equation}

whilst in the 3d-case

\begin{equation}
\left\lbrace\begin{array}{ll}
\displaystyle{A = (1 + (r-1)^2) A_{sym} + \frac{0.01}{r} A_{asym}} \\
\mbox{ with } \\
A_{sym} = \begin{pmatrix}  1.0& 0.2& 0.2\\ 0.2& 1.1& 0.2\\ 0.2& 0.2& 1.2\end{pmatrix},\\
A_{asym} = \begin{pmatrix}  0.&  1.& 1.\\  -1.&  0.& 1.\\  -1.& -1.& 0.\end{pmatrix}.
\end{array}
\right.
\end{equation}

For the case of the unpunctured domain, the gradient $$G_{test}:=\nabla_A J_{test}$$ is obtained by using the adjoint state $p$ (see, for instance, \eqref{6.17} and further). 

Let $p$ be the solution of
\begin{equation}{\label{adjoint}}\begin{split}
&-{\div}( A(x)^t \nabla p ) =  (y_d - y) + \varepsilon_0 {\div}( A(x)^{sym} \nabla(y_d - y)) \mbox{ in } \Omega,\\ &  p = 0 \mbox{ on } \partial \Omega.
 \end{split}
\end{equation}

 We get
 \begin{equation} \label{gradient}
  (G_{test}, W)_{L^2(\Omega;\mathbb{M}^N)} =  \int_{\Omega} \nabla y^t W \nabla p\, d\Omega + \frac{\varepsilon_0}{2} \int_{\Omega} \nabla(y - y_d)^t W \nabla(y - y_d)\, d\Omega,
 \end{equation}
 where $W\in L^2(\Omega)^{N\times N}$.

 We adopt a  finite element method for $y$ and $p$ such that $A$ is constant for each triangular element of the mesh. In order for the algorithm to be more efficient, we  use more data than these discrete components of $A$. We set $n$ different pairs $\left\lbrace(u_d^i, f_d^i), i = 1, n \right\rbrace$. To reduce the value of $n$, we choose to use a spatial smoothing for each component of $G_{test}$. In order to do so, several options are possible (\cite{Destuynder_06}, \cite{Faugeras_93}).
 The new cost functional modified according to these $n$ tests is now (with $y^i$ solution of the state equation (\ref{Prob_0}) for $f$ equals $f^i$ with $i=1,n$):
\begin{equation}
  \displaystyle{J(A,\{y^i,i=1,n\};\{y_d^i,i=1,n\}) = \frac{1}{n} \sum_{i=1,n} J^i(A,y^i;y_d^i)},
\end{equation}
where
$$
\displaystyle{J^i(A,y^i;y_d^i) = \frac{1}{2} \int_{\Omega} |y^i - y_d^i|^2 d\Omega + \frac{\varepsilon_0}{2} \int_{\Omega} \nabla(y^i - y_d^i)^T A^{sym} \nabla(y^i - y_d^i) d\Omega}.
$$
The gradient becomes hereafter a mean of terms obtained in \eqref{gradient}.\\

For the two-dimensional case, we use 16 pairs $(y_d^i, f_d^i)$ associated to a combination of sinusoidal functions useful to capture sufficient information. Each state $y_d^i$ verify the state problem with $f$ equal to $f_d^i$ and $A$ equal to the reference $A_{d}$ (Figure~\ref{fig:2D_whitout_hole_1_1}). The coefficient $\varepsilon_0$ is equal to $10^6$. The initial matrix $A$ is by its coefficients $$(A_{11}(x), A_{12}(x) ; A_{21}(x), A_{22}(x)) = (1, 0.2 ; 0.1, 1.1).$$  The results (Figure \ref{fig:2D_whitout_hole_1_2}) show a coherent convergence.

For the three-dimensional case, the simulation durations prevent to use the same level of discretization than for the two-dimensional cases. We use 48 pairs $(y_d^i, f_d^i)$ associated to a combination of sinusoidal functions equivalent to the 2D-cases. We use 11929 points and 72946 cells for the mesh (without hole).
So we work on 72946 variables for each component of $A$.

The number of pairs $(y_d^i, f_d^i)$ and the smoothing are useful and allow us to control all theses variables, but with difficulties.
We must parallelize our control problem. The $n$ pairs $(y_d^i, f_d^i)$ create $n$ different state problems, each of them can be computed on different core. We use this characteristic to reduce to a few days the simulation duration. We test our three-dimensional program with a singular asymmetric $A_{d}$. The results are shown on Figures \ref{fig:3D_sing_whithout_hole} and \ref{fig:3D_sing_whithout_hole_JG}. The results are as consistent as for the 2D-problem. 

\begin{figure}[h]
 \centering
 \includegraphics[width=6.0cm,clip=true,trim=10pt 30pt 10pt 25pt]{./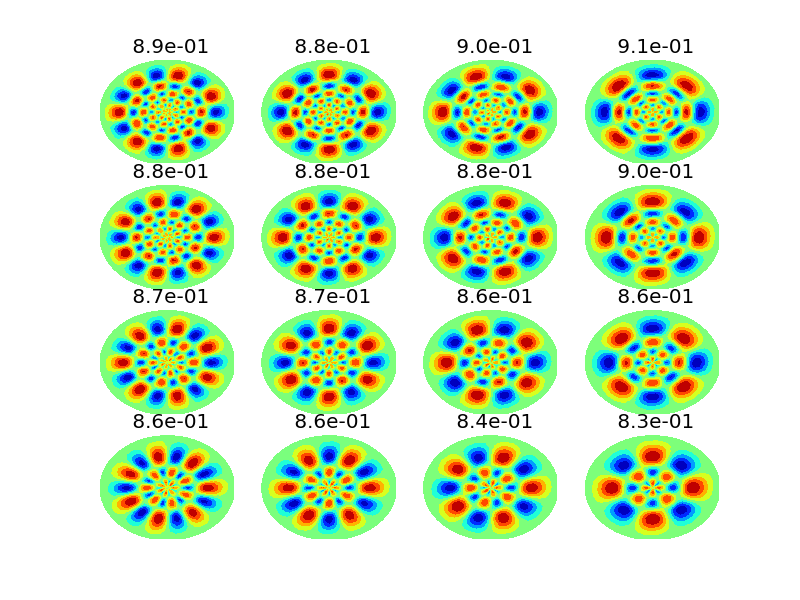}
 \includegraphics[width=6.0cm,clip=true,trim=30pt 20pt 30pt 30pt]{./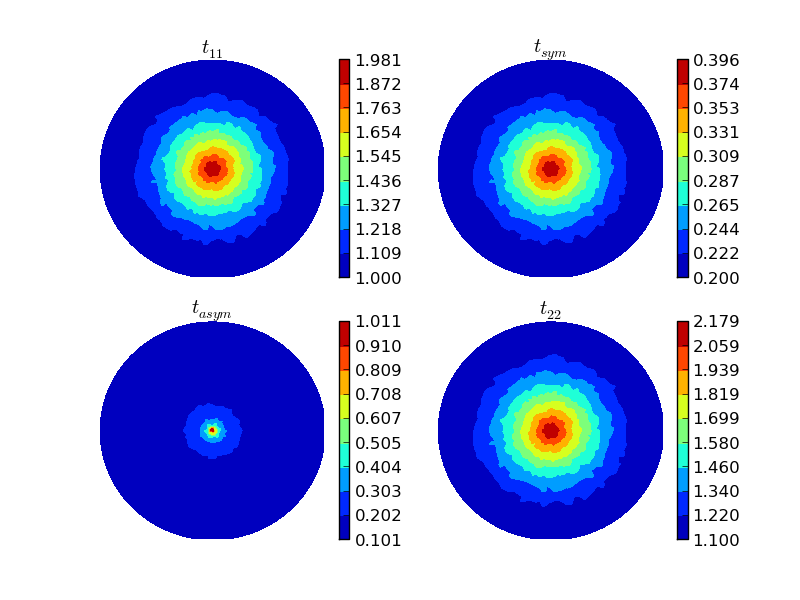}
 \caption{2D case - All $y_d^i$ (left) and $A_{d}$ with a singular asymmetric component (denoted $t_{...}$ in the picture).} \label{fig:2D_whitout_hole_1_1}
 ~\\~\\
 \includegraphics[width=6.0cm,clip=true,trim=30pt 20pt 30pt 30pt]{./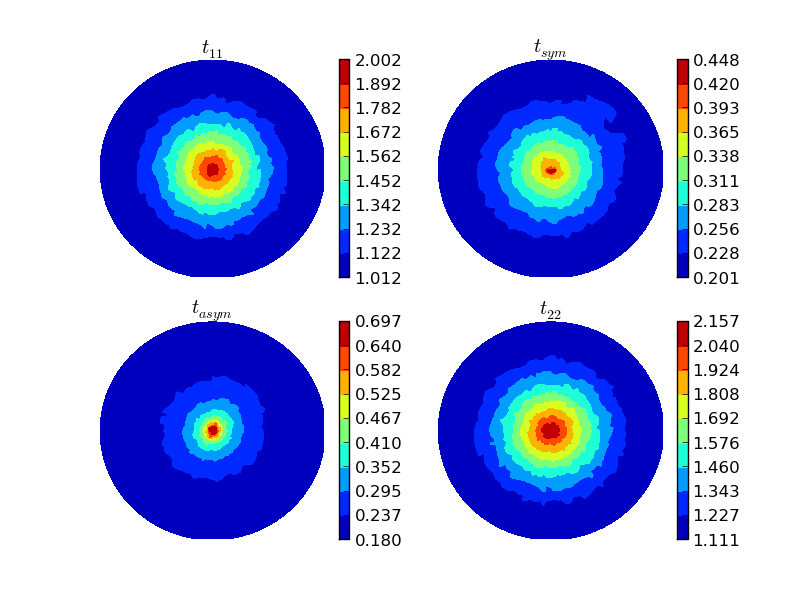}
 \includegraphics[width=6.0cm,clip=true,trim=30pt 25pt 30pt 25pt]{./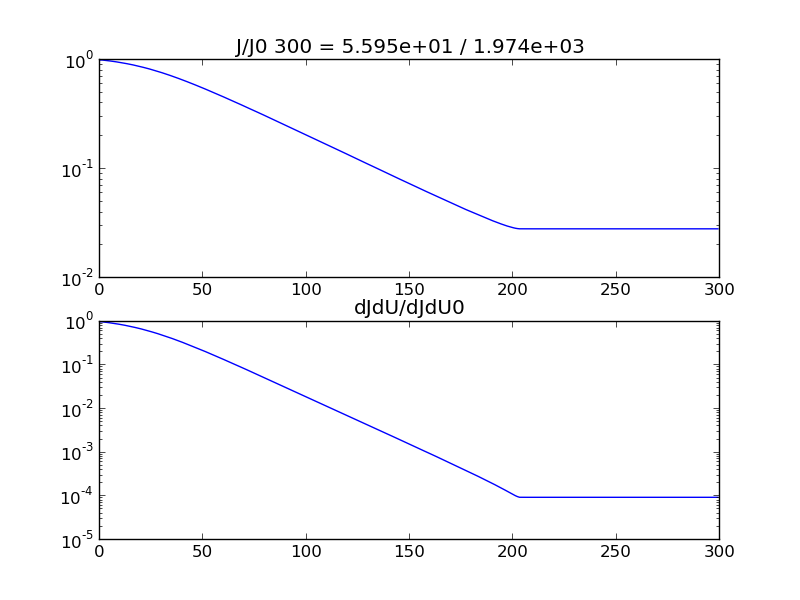}
 \caption{2D case - Final control $A$ (left) and relatives evolutions of $J$ and $||G||$.} \label{fig:2D_whitout_hole_1_2}
\end{figure}

\begin{figure}[h]
 \centering
 \includegraphics[width=1.95cm,bb=0 0 96 96,clip=true,trim=0pt 35pt 0pt 45pt]{./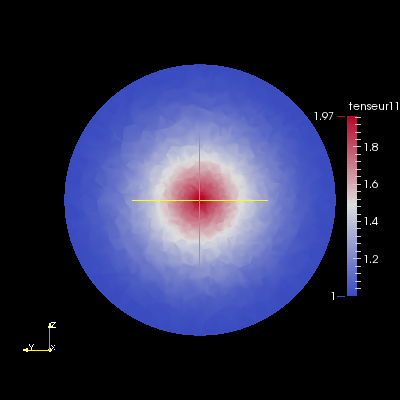}
 \includegraphics[width=1.95cm,bb=0 0 96 96,clip=true,trim=0pt 35pt 0pt 45pt]{./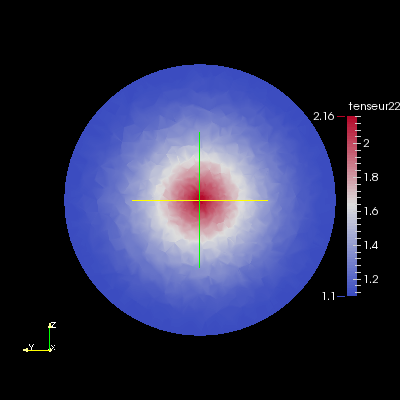}
 \includegraphics[width=1.95cm,bb=0 0 96 96,clip=true,trim=0pt 35pt 0pt 45pt]{./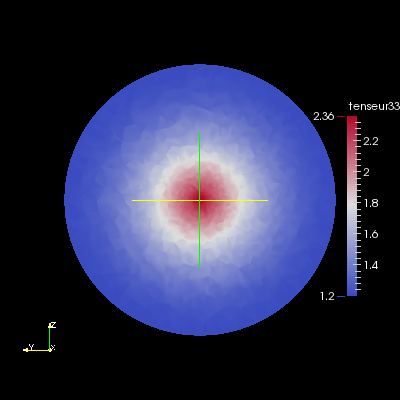} 
 \hspace{0.1cm}
 \includegraphics[width=1.95cm,bb=0 0 96 96,clip=true,trim=0pt 35pt 0pt 45pt]{./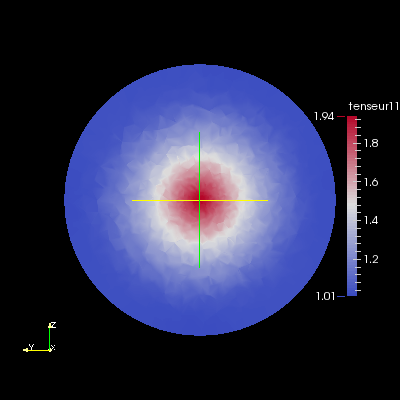}
 \includegraphics[width=1.95cm,bb=0 0 96 96,clip=true,trim=0pt 35pt 0pt 45pt]{./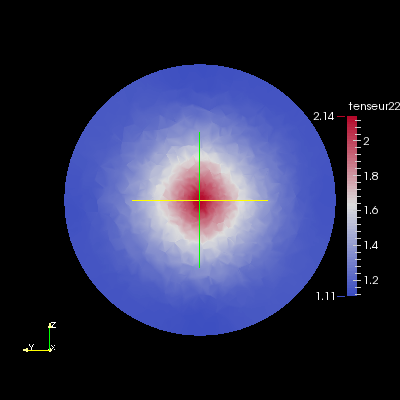}
 \includegraphics[width=1.95cm,bb=0 0 96 96,clip=true,trim=0pt 35pt 0pt 45pt]{./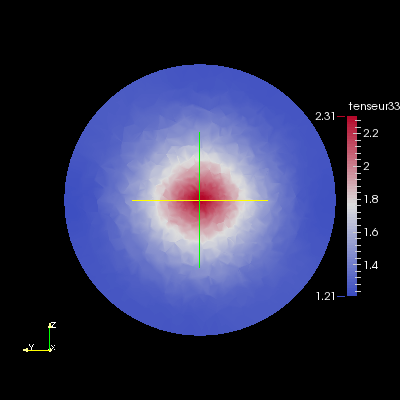}\\
 \includegraphics[width=1.95cm,bb=0 0 96 96,clip=true,trim=0pt 35pt 0pt 45pt]{./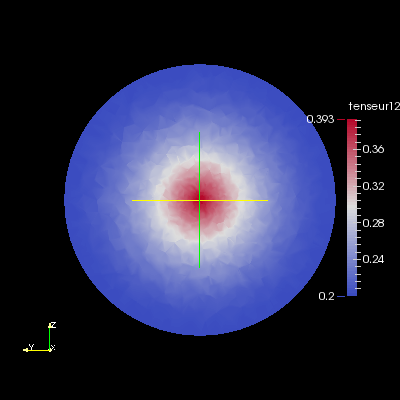}
 \includegraphics[width=1.95cm,bb=0 0 96 96,clip=true,trim=0pt 35pt 0pt 45pt]{./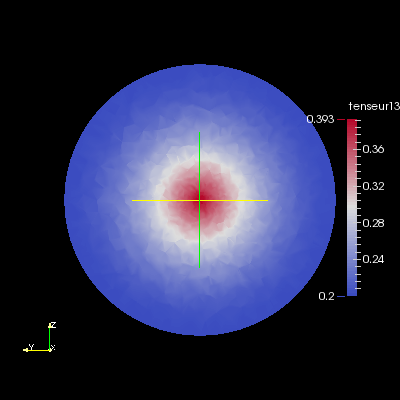}
 \includegraphics[width=1.95cm,bb=0 0 96 96,clip=true,trim=0pt 35pt 0pt 45pt]{./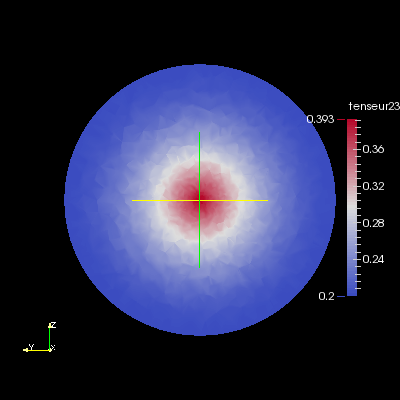}
 \hspace{0.1cm}
 \includegraphics[width=1.95cm,bb=0 0 96 96,clip=true,trim=0pt 35pt 0pt 45pt]{./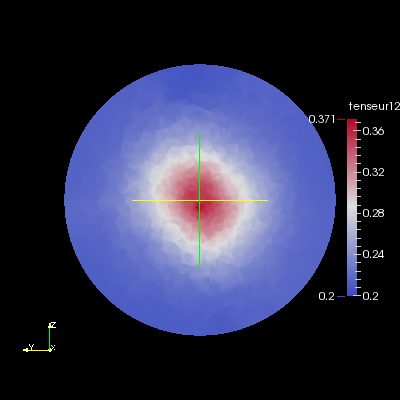}
 \includegraphics[width=1.95cm,bb=0 0 96 96,clip=true,trim=0pt 35pt 0pt 45pt]{./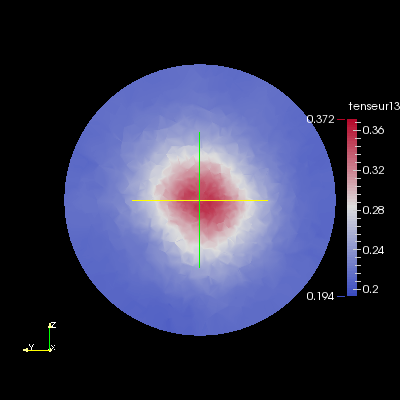}
 \includegraphics[width=1.95cm,bb=0 0 96 96,clip=true,trim=0pt 35pt 0pt 45pt]{./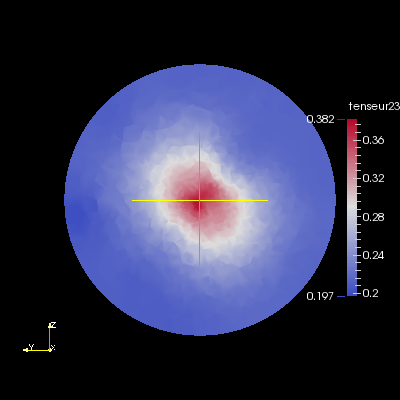}\\
 \includegraphics[width=1.95cm,bb=0 0 96 96,clip=true,trim=0pt 35pt 0pt 45pt]{./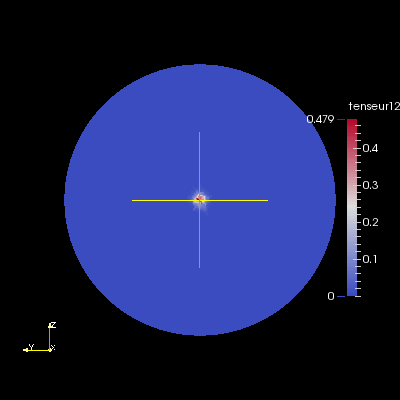}
 \includegraphics[width=1.95cm,bb=0 0 96 96,clip=true,trim=0pt 35pt 0pt 45pt]{./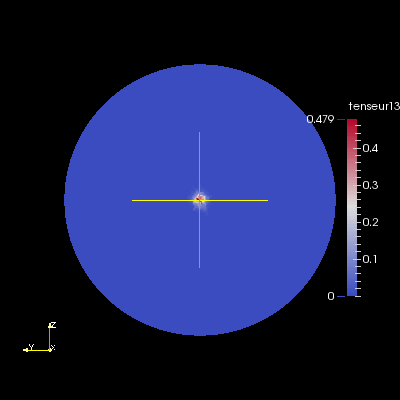}
 \includegraphics[width=1.95cm,bb=0 0 96 96,clip=true,trim=0pt 35pt 0pt 45pt]{./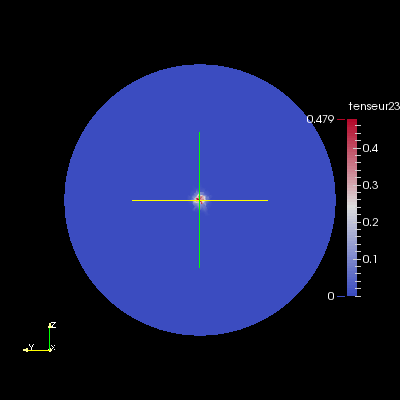}
 \hspace{0.1cm}
 \includegraphics[width=1.95cm,bb=0 0 96 96,clip=true,trim=0pt 35pt 0pt 45pt]{./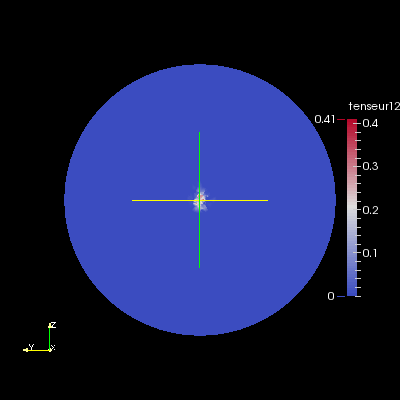}
 \includegraphics[width=1.95cm,bb=0 0 96 96,clip=true,trim=0pt 35pt 0pt 45pt]{./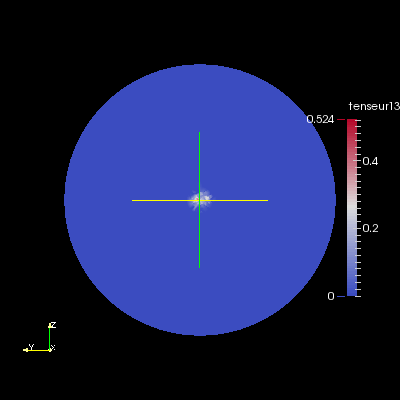}
 \includegraphics[width=1.95cm,bb=0 0 96 96,clip=true,trim=0pt 35pt 0pt 45pt]{./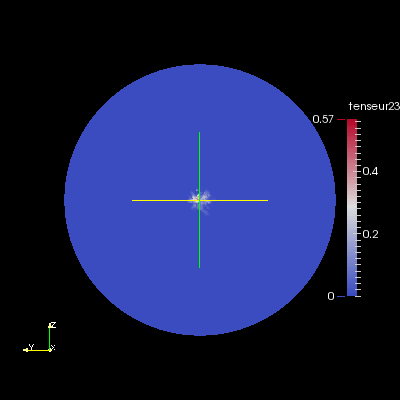}
 \caption{3D case: Components of $A_d$ (left) and the final control $A$ (right) for the plane (0,Y,Z), $A_{11}, A_{22}, A_{33}$ (line 1), symmetric part (line 2) and asymmetric part (line 3) of $A_{12}, A_{13}, A_{23}$ with singular asymmetric components.} \label{fig:3D_sing_whithout_hole}
 \end{figure}
 \begin{figure}[h]
 \centering
 \includegraphics[width=6.0cm,clip=true,trim=30pt 217pt 30pt 20pt]{./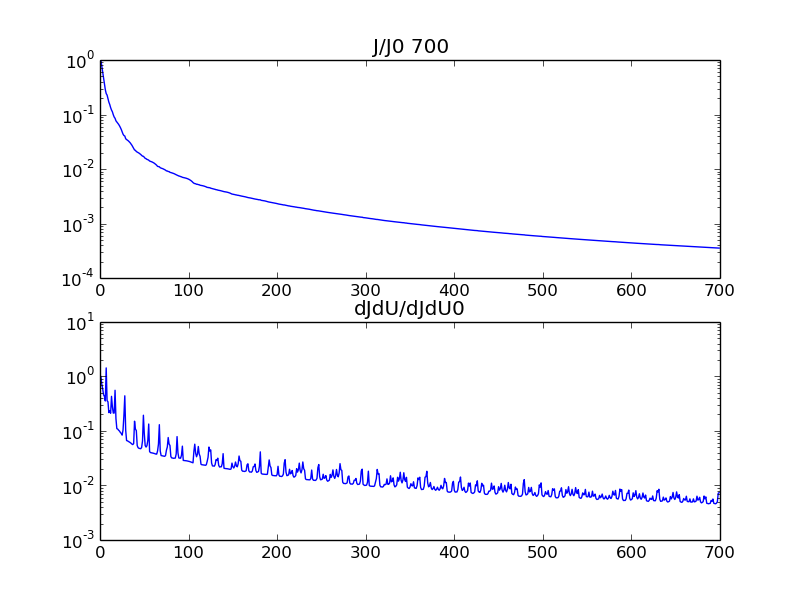}
 \includegraphics[width=6.0cm,clip=true,trim=30pt 28pt 30pt 216pt]{./Trace_3D_3_qcq_sing_V2_J_dJdU.png}
 \vspace{-0.25cm}\caption{3D case: Relatives evolutions of $J$ (left) and $||G||$.} \label{fig:3D_sing_whithout_hole_JG}
\end{figure}

\subsection{Discretization in the unpunctured domain}

We return to the original OCP \eqref{0.1}. We use the same pairs $\left\lbrace(y_d^i, f_d^i), i = 1, n \right\rbrace$ but now the real $A_{d}$ should be considered as unknown that is to say that we now consider a  real  optimization problem,  while the preceding test cases, could be considered as an identification or inverse problem.

The figures \ref{fig:qcqsing} and \ref{fig:3D_without_hole} show t\includegraphics[]{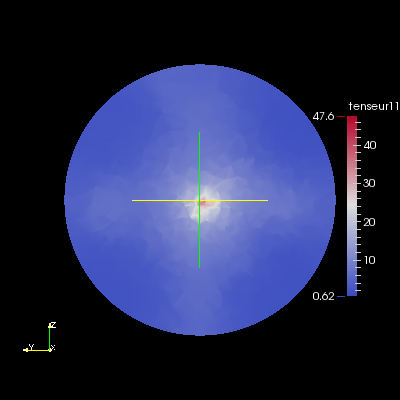}
he results of the direct simulation in two- and three-dimensional cases, respectively (without the trick with puncturing of the singularity region).  We use these results to compare with the next results associated to the OCP \eqref{0.2}.
\begin{figure}[h]
 \centering
 \includegraphics[width=5.5cm,bb=0 0 576 432,clip=true,trim=50pt 30pt 30pt 20pt]{./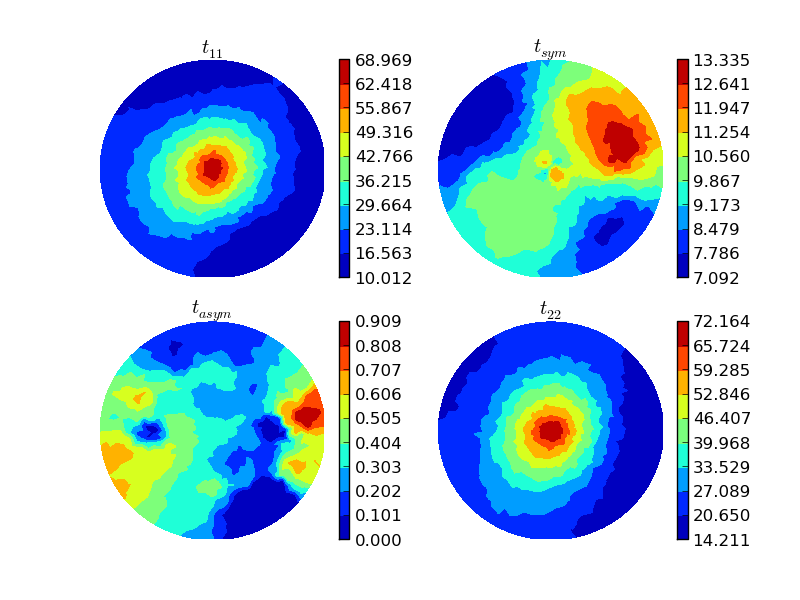}
 \includegraphics[width=5.5cm,bb=0 0 576 432,clip=true,trim=30pt 30pt 30pt 20pt]{./Tests-step2_Trace_2D_2_qcq_sing_V2_J_dJdU.png}
 \caption{2D case without hole: the components of $A$ (left), $J$ and $G$ (right).}
 \label{fig:qcqsing}
\end{figure}
\begin{figure}[h]
 \centering
\begin{tabular}{ll}
 \includegraphics[width=1.95cm,bb=0 0 96 96,clip=true,trim=0pt 35pt 0pt 45pt]{./Tests-step2_Trace_3D_3_qcq_sing_V2_ref2_025_fin_tenseur_37_tenseur11.png}
 \includegraphics[width=1.95cm,bb=0 0 96 96,clip=true,trim=0pt 35pt 0pt 45pt]{./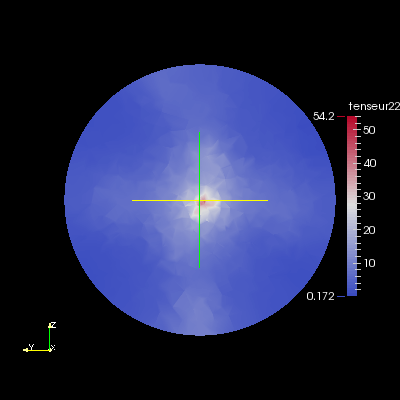}
 \includegraphics[width=1.95cm,bb=0 0 96 96,clip=true,trim=0pt 35pt 0pt 45pt]{./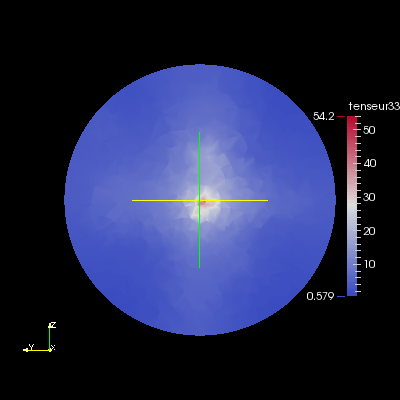}\\
 \includegraphics[width=1.95cm,bb=0 0 96 96,clip=true,trim=0pt 35pt 0pt 45pt]{./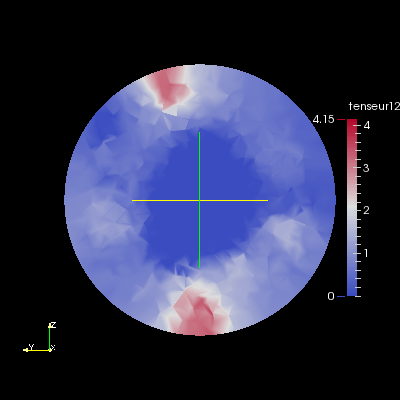}
 \includegraphics[width=1.95cm,bb=0 0 96 96,clip=true,trim=0pt 35pt 0pt 45pt]{./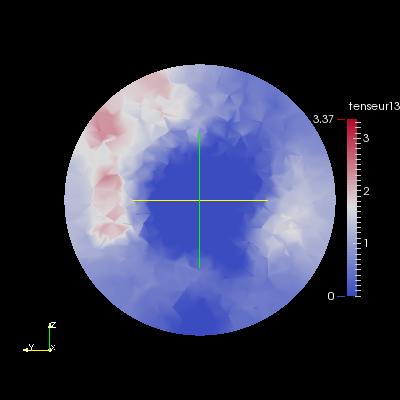}
 \includegraphics[width=1.95cm,bb=0 0 96 96,clip=true,trim=0pt 35pt 0pt 45pt]{./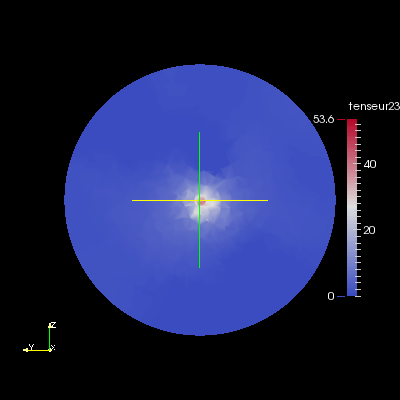}
 \includegraphics[width=5.5cm,clip=true,trim=30pt 217pt 30pt 20pt]{./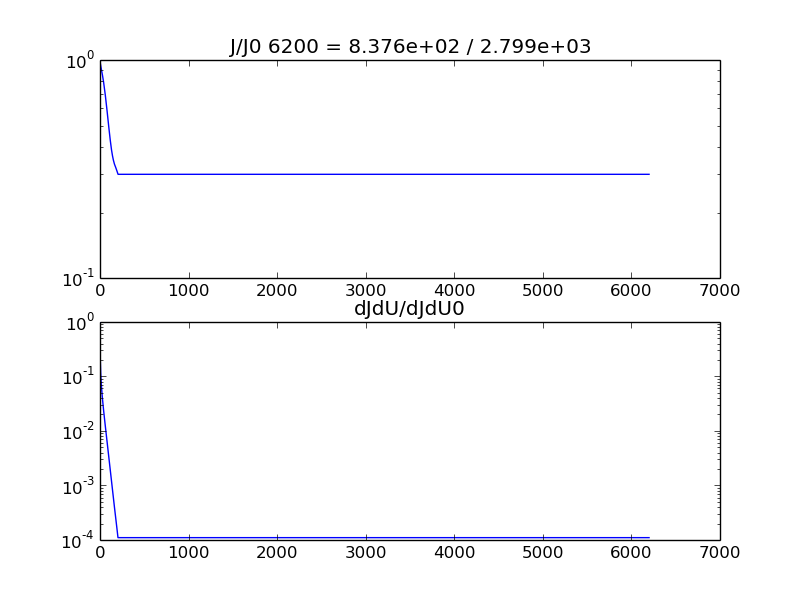}\\
 \includegraphics[width=1.95cm,bb=0 0 96 96,clip=true,trim=0pt 35pt 0pt 45pt]{./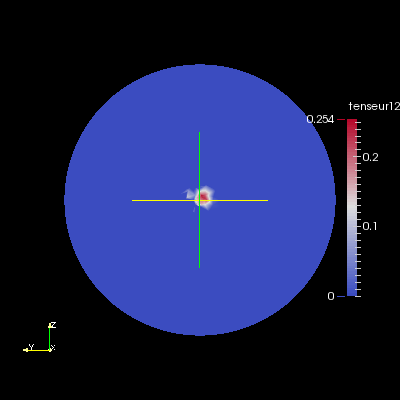}
 \includegraphics[width=1.95cm,bb=0 0 96 96,clip=true,trim=0pt 35pt 0pt 45pt]{./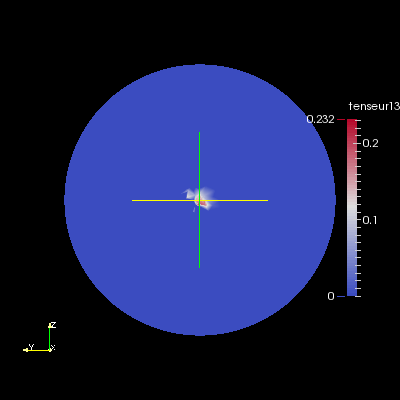}
 \includegraphics[width=1.95cm,bb=0 0 96 96,clip=true,trim=0pt 35pt 0pt 45pt]{./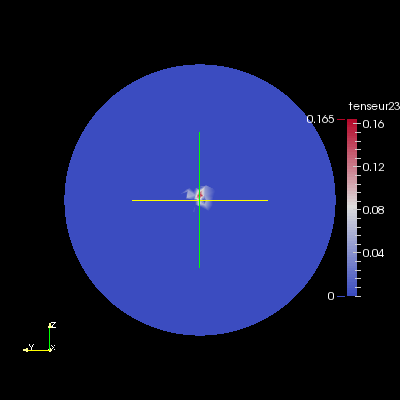}
 \includegraphics[width=5.5cm,clip=true,trim=30pt 28pt 30pt 216pt]{./Tests-step2_Trace_3D_3_qcq_sing_V2_ref2_025_fin_J_dJdU.png}\\
 \end{tabular}
 \caption{3D case without hole: (left) Visualisation of the components of the final control $A$ for the plane (0,Y,Z), $A_{11}, A_{22}, A_{33}$ (line 1), symmetric part of $A_{12}, A_{13}, A_{23}$  (line 2) and asymmetric part  of $A_{12}, A_{13}, A_{23}$ (line 3). (right) Relatives evolutions of $J$ (line 2) and $||G||$ (line 3).} \label{fig:3D_without_hole}
\end{figure}

\subsection{Discretization in the punctured domain}

At this step we consider the approximation of the original OCP in the form of \eqref{0.2}. In this case, we must add the $\hat{p}$ to the adjoint $p$ state solution of \eqref{adjoint} for each pairs $(u_d^i, f_d^i)$ where $\Omega$ is replaced by $\Omega_{\varepsilon}$ and
\begin{equation}
 \displaystyle{\hat{p} = -\frac{q}{\varepsilon^{\sigma}} \mbox{ on } \Gamma_{\varepsilon},}
\end{equation}
where $q$ satisfies (denoting $B_{\varepsilon}: $) 
\begin{equation}
\left\lbrace
\begin{array}{rl}
 q - \Delta q &= 0 \mbox{ in } B_{\varepsilon},\quad\left(\text{here }\ \Omega = \Omega_{\varepsilon} \bigcup B_{\varepsilon}\right) \\
 \displaystyle{\frac{\partial q}{\partial \nu}} &= v \mbox{ on } \Gamma_{\varepsilon} .
\end{array}
\right.
\end{equation}
We have then
\begin{equation}
 \displaystyle{\|v\|_{H^{-\frac{1}{2}}(\Gamma_{\varepsilon})} = \|q\|_{H^1(B_{\varepsilon})}}
\end{equation}

For the two-dimensional case, the pictures \ref{fig:qcqsingtrou05}, \ref{fig:qcqsingtrou025} show the results. The second case uses a smaller hole. For the three-dimensional case, the figure \ref{fig:3D_with_hole} shows the results. We can note that the values of the functional is always smaller than the cases without hole. For the second 2D case with a smaller hole, the components become more different than these obtained with the OCP \eqref{0.1}.

Of course these results do not validate the existence of variational and non-variational solutions. However, according to Zhkov [private communication], or if we believe that the uniqueness and regularity results in \cite{BrCa} lead to the absence of non-variational solutions in dimension $2$, the numerical simulations above tends to show that arguably this does exist in dimension $2$. However, due to computational performance and refinement requirement, it is probably very difficult to ascertain that our numerical simulations do prove the prevalence of non-variational solutions or not to OCP \eqref{0.1} on the class of admissible controls $A$ with unremovable singularity.

\begin{figure}[h]
 \centering
 \includegraphics[width=5.5cm,bb=0 0 576 432,clip=true,trim=50pt 30pt 30pt 20pt]{./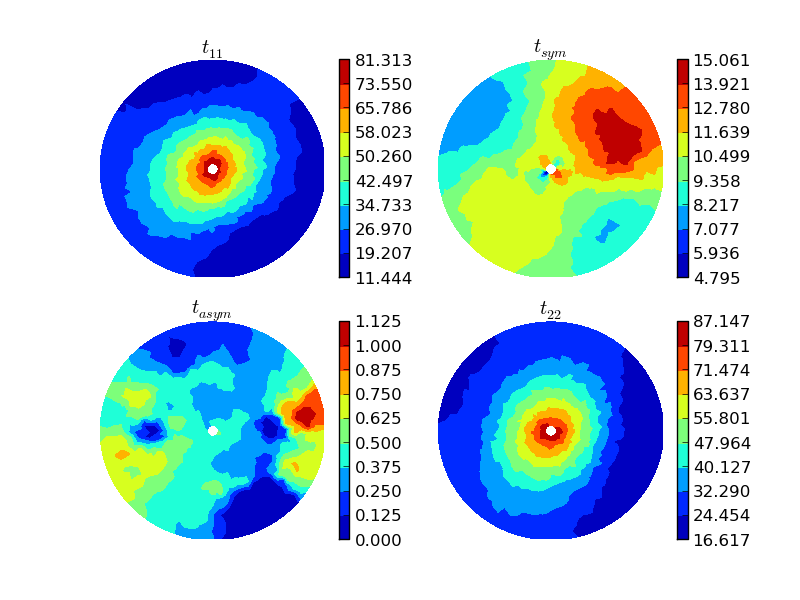}
 \includegraphics[width=5.5cm,bb=0 0 576 432,clip=true,trim=30pt 30pt 30pt 20pt]{./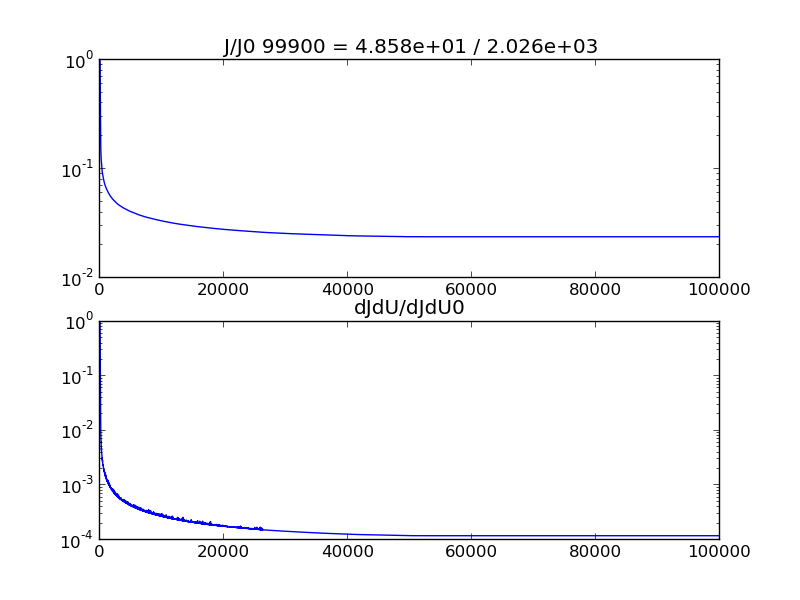}
 \caption{2D case with hole : the components of $A$ (left), $J$ and $G$ (right) ($\varepsilon = 0.05, \sigma = 0.1$).}
 \label{fig:qcqsingtrou05}
\end{figure}

\begin{figure}[h]
 \centering
 \includegraphics[width=5.5cm,bb=0 0 576 432,clip=true,trim=50pt 30pt 30pt 20pt]{./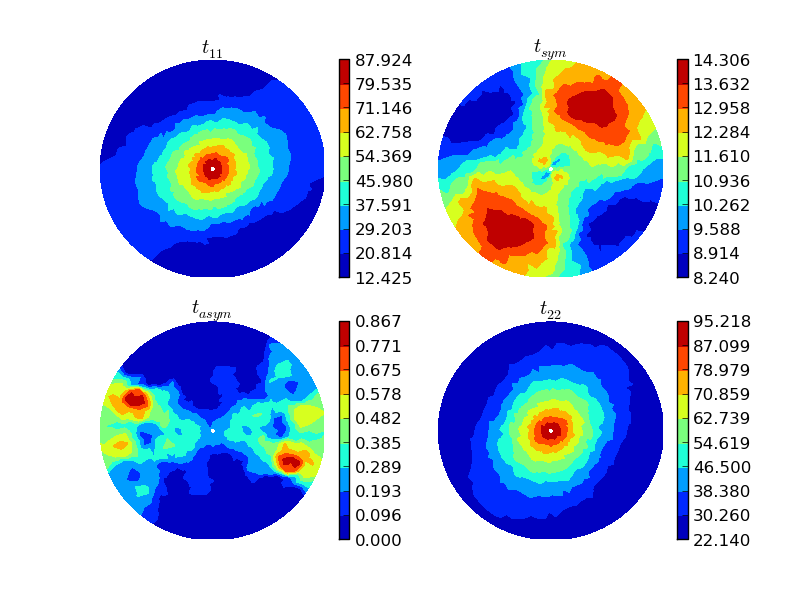}
 \includegraphics[width=5.5cm,bb=0 0 576 432,clip=true,trim=30pt 30pt 30pt 20pt]{./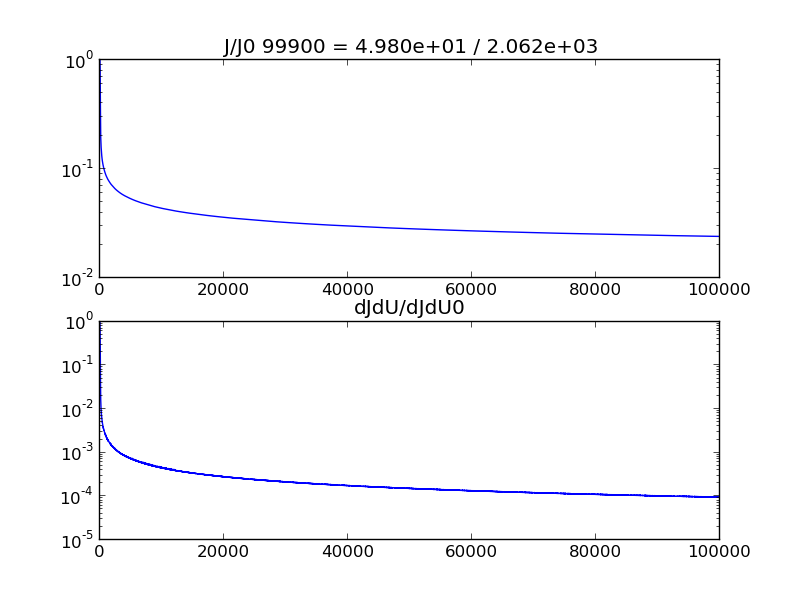}
 \caption{2D case with hole : the components of $A$ (left), $J$ and $G$ (right) ($\varepsilon = 0.025, \sigma = 0.1$).}
 \label{fig:qcqsingtrou025}
\end{figure}

\begin{figure}[h]
 \centering
 \begin{tabular}{ll}
 \includegraphics[width=1.95cm,bb=0 0 96 96,clip=true,trim=0pt 35pt 0pt 45pt]{./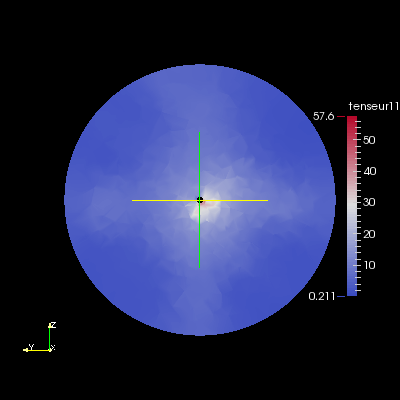}
 \includegraphics[width=1.95cm,bb=0 0 96 96,clip=true,trim=0pt 35pt 0pt 45pt]{./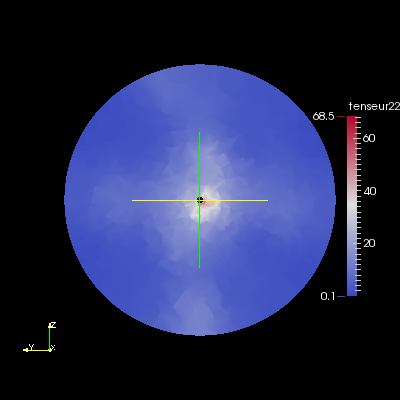}
 \includegraphics[width=1.95cm,bb=0 0 96 96,clip=true,trim=0pt 35pt 0pt 45pt]{./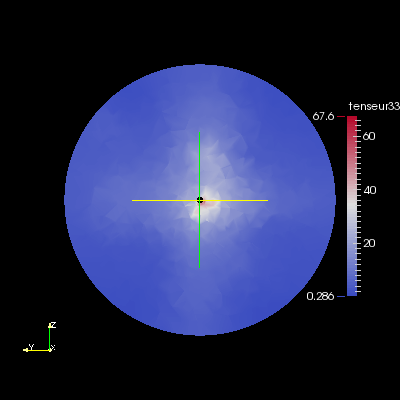}\\
 \includegraphics[width=1.95cm,bb=0 0 96 96,clip=true,trim=0pt 35pt 0pt 45pt]{./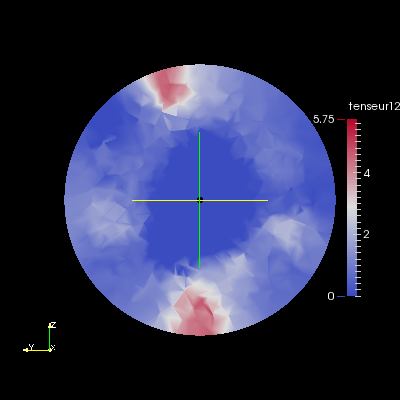}
 \includegraphics[width=1.95cm,bb=0 0 96 96,clip=true,trim=0pt 35pt 0pt 45pt]{./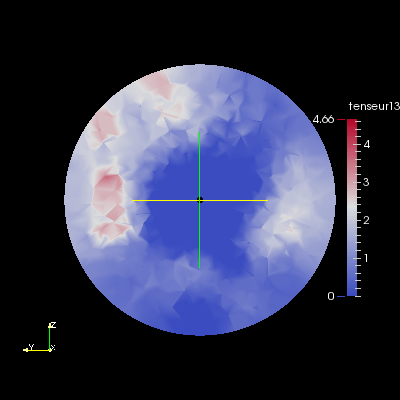}
 \includegraphics[width=1.95cm,bb=0 0 96 96,clip=true,trim=0pt 35pt 0pt 45pt]{./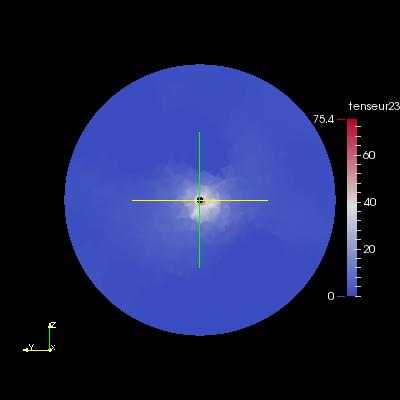}
 \includegraphics[width=5.5cm,clip=true,trim=30pt 217pt 30pt 20pt]{./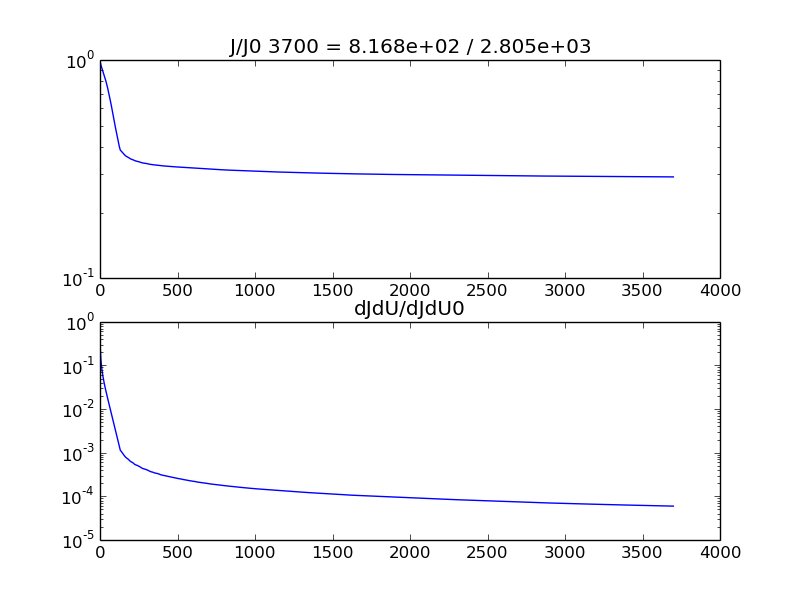}\\
 \includegraphics[width=1.95cm,bb=0 0 96 96,clip=true,trim=0pt 35pt 0pt 45pt]{./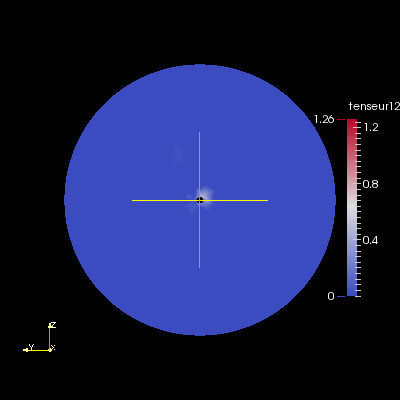}
 \includegraphics[width=1.95cm,bb=0 0 96 96,clip=true,trim=0pt 35pt 0pt 45pt]{./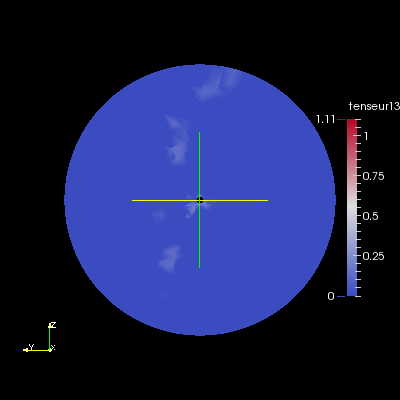}
 \includegraphics[width=1.95cm,bb=0 0 96 96,clip=true,trim=0pt 35pt 0pt 45pt]{./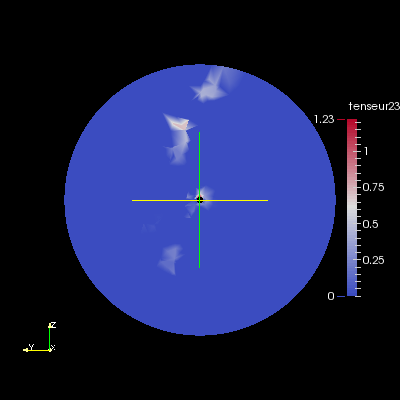}
 \includegraphics[width=5.5cm,clip=true,trim=30pt 28pt 30pt 216pt]{./Tests-step2_Trace_3D_3_qcq_sing_trou_V2_ref2_025_fin_J_dJdU.png}
 \end{tabular}
 \caption{3D with hole: (left) Visualisation of the components of the final control $A$ for the plane (0,Y,Z), $A_{11}, A_{22}, A_{33}$ (line 1), symmetric part of $A_{12}, A_{13}, A_{23}$  (line 2) and asymmetric part  of $A_{12}, A_{13}, A_{23}$ (line 3) ($\varepsilon = 0.05, \sigma = 0.1$). (right) Relatives evolutions of $J$ (line 2) and $||G||$ (line 3).} \label{fig:3D_with_hole}

\end{figure}

\section*{Acknowledgments} The authors gratefully acknowledge the support of le Conservatoire National des Arts et M\'{e}tiers (Paris, France) during the invitation of P. Kogut and the support of the French ANR Project CISIFS for an initiating paper on the subject.

% You may incorporate your references as follows in your main tex file.
% Using BibTex is not recommended but can be handled.

\medskip
% The data information below will be filled by AIMS editorial staff
Received xxxx 20xx; revised xxxx 20xx.
\medskip

                \end{document}